\newtheorem{theorem}{Theorem}[section]
\newtheorem{proposition}{Proposition}[section]
\newtheorem{lemma}{Lemma}[section]
\newtheorem{definition}{Definition}[section]
\theoremstyle{remark}
\newtheorem{remark}{Remark}[section]
\numberwithin{equation}{section} \numberwithin{theorem}{section}
\numberwithin{proposition}{section} \numberwithin{lemma}{section}
\numberwithin{corollary}{section}
\numberwithin{definition}{section} \numberwithin{remark}{section}
\newcommand{\R}{\mathbb{R}}
\newcommand{\spt}{\mathrm{spt}}
\newcommand{\LL}{\mathcal{L}} 
\newcommand{\HH}{\mathcal{H}} 
\newcommand{\N}{\mathbb{N}} 
\newcommand{\mres}{\mathbin{\vrule height 1.6ex depth 0pt width
0.13ex\vrule height 0.13ex depth 0pt width 1.3ex}}
\newcommand{\U}{U}
\newcommand{\W}{W}
\newcommand{\WW}{W_{10}}
\newcommand{\ZZ}{Z}
\newcommand{\eaf}{\varepsilon_A}
\author[C. Labourie]{Camille Labourie}
\author[A. Lemenant]{Antoine Lemenant}
\address[C. Labourie]{Department of Mathematics, Friedrich-Alexander Universität Erlangen-Nürnberg.
Cauerstr. 11, D-91058 Erlangen, Germany}
\email{camille.labourie@fau.de}
\address[A. Lemenant]{Universit\'e de Lorraine -- CNRS, UMR 7502 IECL, BP 70239
54506 Vandoeuvre-lès-Nancy,  France}
\email{antoine.lemenant@univ-lorraine.fr}
\date{\today}
\title[Epsilon-regularity for Griffith almost-minimizers under a separating condition]{Epsilon-regularity for Griffith almost-minimizers in any dimension under a separating condition}
\begin{document}

\begin{abstract}
    In this paper we prove that if $(u,K)$ is an almost-minimizer of the Griffith functional and $K$ is $\varepsilon$-close to a plane in some ball $B\subset \R^N$ while separating the ball $B$ in two big parts, then $K$ is $C^{1,\alpha}$ in a slightly smaller ball. Our result contains and generalizes the 2 dimensional result of \cite{bil}, with a different and more sophisticate approach inspired by \cite{l3,l2}, using also \cite{Lab} in order to adapt a part of the argument to Griffith minimizers.
\end{abstract}

\maketitle
\tableofcontents

\section{Introduction}

The variational model of crack propagation introduced by Francfort and Marigo \cite{FM} is based upon  the idea of Griffith from the early 20th century, saying that the needed energy to produce a crack in an elastic material, is proportional to the surface area of the crack. This is how one can hope to produce mathematically, a crack set depending on time $K(t)$, coming from the quasistatic limit of discrete sets $K_n:=K(t_n)$ which minimizes at each time the stationary Griffith energy
$$\int_{\Omega \setminus K} \mathbb{C} e(u) \cdot e(u) \dd{x} + \mathcal{H}^{N-1}(K),$$
where $e(u) = (Du + Du^T)/2$ stands for the symmetric gradient of the elastic displacement $u:\Omega \to \R^N$,  $\mathcal{H}^{N-1}$ is the Haudorff measure and $\mathbb{C}$ are elliptic coefficients.

Despite of the similarity with the classical  Mumford-Shah functional, for which a huge literature gave a thorough description of the minimizers from the 90s, the Griffith functional is  more delicate due to the configurations $u$ with vector values, much different from the scalar case. Another difficulty is coming from the fact that the energy with $e(u)$ controls only the symmetric part of the gradient, and not the full gradient $Du$ which implies, in absence of good Korn type inequalities in the non regular domain $\Omega \setminus K$, some major technical problems. This is why the development of the tools to treat Griffith minimizers appeared relatively recently only, comparing to the standard theory of free-discontinuity problems. To mention a few,  the weak existence in $GSBD$ by Dal Maso in \cite{dalM}, the strong existence of a minimizer by Chambolle and Crismale \cite{vito}, in addition to the Ahlfors-regularity  by Chambolle, Conti and Iurlano \cite{CCI} using also a previus  work by Conti, Focardi and Iurlano \cite{CFI}.

The main question that we address in this paper concerns the  $C^1$ regularity of minimizers for the Griffith functional, in any dimension. This question is related to the so called Mumford-Shah conjecture, about the minimizers of the famous Mumford-Shah functionnal
$$\int_{\Omega\setminus K}|\nabla u|^2 \dd{x} + \mathcal{H}^1(K).$$
The question of Mumford and Shah (1989) which is still open, is to know whether the minimizers $K$ of the Mumford-Shah functional are formed by a finite number of $C^1$ curves (see for instance the review paper about this problem \cite{lreview2}).  Bonnet in \cite{b} showed that the answer is true if $K$ is assumed to be a connected set. Some precise $C^1$ estimates have also been  obtained by  David \cite{d5},   and a famous  partial  $C^1$ result in higher dimensions  is given by Ambrosio, Fusco, Pallara in  \cite{afp}. Later, an independent  proof and more precise in the special dimension $3$ case appeared in \cite{l2}. See also \cite{rigot,DPF,AFH,DLF1} for further developments.

However, while trying to attack the Griffith functional with the standard Mumford-Shah tools, we rapidly see that most of the arguments do not work on this variant with $e(u)$. The energy controls only the symmetric part of the gradient which does not control $u$ in general. For instance, the monotonicity of Bonnet does not apply in the vectorial context. Most of the extension technics do not work neither, and the co-area formula cannot be used anymore. All the competitors obtained by truncation or composition are forbidden, which makes the analysis highly difficult. For instance, the Euler-Lagrange formula is not available, which prevents to derive a tilt-excess estimate, a starting point  to apply the standard approach of Amborsio, Fusco and Pallara \cite{afp}.

Despite of theses difficulties, the second author together with Babadjian and Iurlano have recently proved  in \cite{bil}   a partial $C^1$ result on the singular set of a Griffith minimizer, in dimension 2, with the additional assumption that $K$ is connected. The proof does not extend to higher dimensions for several reasons. Later in \cite{ll}, the result has been exploited to improve the dimension of the singular set and integrability of the symmetrized gradient in dimension 2.

In the present paper, we extend the results of \cite{bil} to any dimension $N\geq 2$, by using a completely different approach. Since connectedness of $K$ has no powerful meaning in higher dimensions, we replace it by a separating assumption: we say that $K$ separates $B(x,r)$ when, possibly after rotating, the north pole and south pole lie in different connected components of $B(x,r)\setminus K$ (see Definition \ref{defi_separation} for a more precise statement).

Our $\varepsilon$-regularity result uses a quantities  that is usually called the bilateral flatness of $K$ defined by 

\begin{equation*}
    \beta_K(x_0,r_0) := r^{-1} \inf_P \max \Set{\sup_{y \in K \cap B(x_0,r_0)}{\rm dist }(y,P), \sup_{y \in P \cap B(x_0,r_0)}{\rm dist }(y,K)},
\end{equation*}
where the infimum is over all hyperplanes $P$ passing trough $x_0$.

Now here is our main result (we refer to {\color{black} Definition \ref{def_locmin} for the precise definition} of an almost-minimizer with gauge $h$ and to Definition \ref{defi_separation} for the separating condition).

\begin{theorem}
    For each choice of $\alpha \in (0,1)$, there exists constants $\varepsilon_0 \in (0,1)$, $\gamma \in (0,\alpha)$ and $c \in (0,1)$ (all depending on $N$, $\alpha$) such that the following holds.
    Let $(u,K)$ be a Griffith almost-minimizer with gauge $h(t) = h(1) t^{\alpha}$ in an open set $\Omega$.
    Let $x_0 \in K$, $r_0 > 0$ be such that $B(x_0,r_0) \subset \Omega$, $K$ separates $B(x_0,r_0)$ and
    \begin{equation*}
        \beta_K(x_0,r_0) + h(r_0) \leq \varepsilon_0.
    \end{equation*}
    Then $K$ is a smooth $C^{1,\gamma}$ embedded surface in $B(x_0,c r_0)$.
\end{theorem}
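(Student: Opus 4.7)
My plan follows the classical $\varepsilon$-regularity strategy for free-discontinuity problems: first derive a Campanato-type decay of the bilateral flatness $\beta_K$ at every scale inside $B(x_0,cr_0)$, and then parameterize $K$ as a $C^{1,\gamma}$ graph from the decay. Because the standard Mumford-Shah devices (truncation, co-area, Bonnet monotonicity) are unavailable for the vector-valued Griffith energy, I would follow the competitor-based route of \cite{l3,l2}, reducing flatness by comparison with pairs whose crack is a hyperplane, and overcome the obstructions caused by $e(u)$ through the quantitative Korn-Poincaré estimates of \cite{Lab}.

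\textbf{Step 1: propagation of flatness and separation.} I would first show that if $\beta_K(x_0,r_0)+h(r_0)$ is small enough, then every ball $B(x,r)$ with $x \in K \cap B(x_0,cr_0)$ and $r \leq r_0/2$ is still $\varepsilon'$-flat and separated by $K$. Flatness propagation is a routine covering argument once Ahlfors regularity (the content of \cite{CCI}) is invoked; separation propagation is a topological observation using proximity of $K$ to a hyperplane. This is what makes the iteration meaningful.

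\textbf{Step 2: energy control.} Under flatness and separation, the two connected components of $B(x,r)\setminus K$ are close to the two half-spaces bounded by an approximating plane $P$. Using \cite{Lab}, each restriction $u_\pm$ admits a Korn extension across $P$ with controlled symmetric gradient. Performing an elastic replacement on each side produces a competitor $(v,P)$ whose bulk defect is controlled by $\beta_K$; testing the almost-minimality of $(u,K)$ against $(v,P)$ yields a normalized-energy bound
\[
r^{1-N}\int_{B(x,r)\setminus K} \mathbb{C}e(u)\cdot e(u)\, dx \;\lesssim\; \beta_K(x,r)^2 + h(r).
\]

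\textbf{Step 3: flatness decay, iteration, and parameterization.} The crux is a decay $\beta_K(x,\tau r) \leq \tfrac12 \tau^\gamma\bigl(\beta_K(x,r)+h(r)\bigr)$ for some universal $\tau \in (0,1)$ and $\gamma \in (0,\alpha]$. I would argue by contradiction and compactness: rescaling a violating sequence, the energy bound of Step 2 combined with the Korn extensions extracts a limiting pair whose crack is a hyperplane and whose displacement is elastic on each side; standard linear elasticity in each half-space then yields the tilt decay, contradicting the scaling assumption. Iterating gives $\beta_K(x,r)\lesssim (r/r_0)^\gamma$ uniformly on $K \cap B(x_0,cr_0)$, after which a Reifenberg-type parameterization produces the $C^{1,\gamma}$ chart. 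The hardest part is the compactness inside this step: for Mumford-Shah one relies on truncation and reflection of $u$ across the crack, neither of which survives the passage to $e(u)$ and $GSBD$. The main technical effort thus lies in producing, uniformly along the rescaled sequence, Korn-Poincaré extensions of $u_\pm$ whose symmetric gradients converge in $L^2$ and whose traces match the elastic replacement on the limiting hyperplane; this is exactly where the results of \cite{Lab} are invoked to upgrade the weak $GSBD$ compactness to the convergence needed to close the blow-up argument.
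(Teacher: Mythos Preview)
Your Step~1 contains the central gap. The inequality $\beta(x,r) \leq (2r_0/r)\,\beta(x_0,r_0)$ is all that Ahlfors regularity and covering give you, and this blows up as $r \to 0$. Flatness does \emph{not} propagate to small scales by any routine argument; indeed, the entire architecture of the paper is built precisely to get around this. The paper introduces a stopping-time function $d(x)$ recording the first scale at which flatness fails, defines the \emph{bad mass} $m(x_0,r_0)$ as the $\HH^{N-1}$-measure of $K$ covered by the resulting bad balls, and then proves three interlocking decay estimates (Propositions~\ref{prop_decay_badmass}, \ref{prop_minimality_defect}, \ref{prop_decay_energy}) that force $m$ and $\omega$ to decay together while $\beta$ stays bounded. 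Only after this iteration does one know $m \equiv 0$ in a smaller ball, i.e.\ flatness persists at all scales. Without this mechanism your Steps~2--3 never get started: the competitor $(v,P)$ in Step~2 requires extending $u$ across $K$, but in the bad balls where $K$ is far from any plane there is no controlled extension, and the energy bound $\omega \lesssim \beta^2 + h$ you claim is not available (the paper obtains instead $\omega(x_0,br_0) \leq C b\,\omega(x_0,r_0)$ \emph{conditionally} on $m\beta + h \leq \epse\,\omega$).

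Two further points. First, the paper does \emph{not} prove a direct flatness decay $\beta(x,\tau r) \leq \tfrac12 \tau^\gamma \beta(x,r)$ for the Griffith pair; instead it shows (Proposition~\ref{prop_minimality_defect}) that once $m=0$ and $\omega$ decays, $K$ is an almost-minimal \emph{set} with power gauge, and then invokes the regularity theory for almost-minimal sets (Theorem~\ref{thm_david}) to conclude. This sidesteps the tilt-excess machinery entirely. Second, your invocation of \cite{Lab} is misplaced: in the paper that reference supplies the limiting behaviour of almost-minimizing \emph{sets} (Lemma~\ref{lem_compactness_minimality}), replacing the uniform concentration property which is unavailable for Griffith minimizers. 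It is not a source of Korn--Poincar\'e extensions for $u$; the extensions are built by hand via a partition of unity adapted to the stopping-time geometry (Lemma~\ref{lem_extension}).
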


Notice  that without control on the flatness $\beta_K$, the set $K$ could be a triple junction  (three curves meeting with an angle $2\pi/3)$.

It is worth mentioning that  in dimension 2 when $K$ is connected, then the separating condition holds $\mathcal{H}^1$-a.e. on $K$ so that our result contains the one of \cite{bil} and  generalizes it to almost-minimizers instead of minimizers.   As a matter of fact, we also extend the results of  \cite{ll} to almost-minimizers as well.

Now in higher dimensions, it is not clear how to prove that, for an almost-minimizer (or even a true minimizer), the separating condition  holds almost everywhere. Therefore, our result does not directly imply that the singular set of a Griffith minimizer in dimension $N$ is $C^{1,\alpha}$ $\mathcal{H}^{N-1}$-a.e.
Proving that the separating condition holds almost everywhere is a difficult open problem because of the lack of co-area formula for the symmetrized gradient.
However, we believe our result to be a step toward the full regularity of minimizer in any dimension.   

Next, let us say a few words about the proof. The strategy that we employ   is to follow the approach introduced in \cite{l3,l2} for the Mumford-Shah functional. The main idea, that was suggested  by Guy David to the second author during his thesis, is to use a stopping time argument on the flatness to identify a region where $K$ would be ``good'' (which means $\varepsilon$-flat at every scale) and  another region where $K$ would be ``bad'' (i.e. stops being flat at some scale), which is performed in Section \ref{section5}. The main point is then to estimate the size of the bad region, denoted by $m(r)$. This is done by use of a compactness argument (Lemma \ref{lem_flatness}) that says that if $K$ stops being flat at some scale, then one can win quantitatively some surface area in that scale,  which is one of the key ingredients in the construction of a competitor. In this compactness argument we had to use a different argument compared to the scalar scale  \cite{l3,l2} to avoid using the uniform concentration property, which is not known for our set $K$.

Furthermore, the general strategy works by use of another very important ingredient:  an extension tool for the function $u$.  Since we control approximatively the geometry of $K$ at every working scale thanks to our stopping time function, we can extend $u$ near $K$ by replacing it with averages and use a partition of unity in order to obtain a well defined function while $K$ has been modified as a competitor. We have written a general statement encoding this procedure in Lemma \ref{lem_extension}, that we use later several times for each competitor that we create.

Gathering together the stopping time argument, the compactness lemma and the extension tool, we are able to prove that the size of the bad set in a ball $B_r$, denoted by  $m(r)$, has a decaying property involving the normalized elastic energy $\omega(r)$ (see Proposition \ref{prop_badmass_decay})  defined by 

$$
\omega(x_0,r_0) = \frac{1}{r_0^{N-1}}\int_{B(x_0,r_0) \setminus K} \abs{e(u)}^2 \dd{x}.
$$

Then we need to prove that the normalized elastic energy has itself a decaying property to bootstrap the estimates. The decay of energy is proved by use of a compactness argument which is the purpose of Proposition \ref{prop_energy_decay}. In this proof we benefit from the separating property in order to use a jump-transfer technic. But we also use the sophisticated extension tool (Lemma \ref{lem_extension}) in a subtile manner in order to gain some closure property of a contradicting sequence. The proof is therefore totally different from the two dimensional argument of  \cite{bil}, which uses the Airy function (available only in dimension 2).

At the end we prove that all the quantities $\omega(r)$ and $m(r)$ are decaying like a power $r^\alpha$ whereas $\beta(r)$ stays small, which implies that $K$ is actually an almost-minimal set with gauge of order $r^\alpha$, which leads to the conclusion.

As the strategy is similar to the one of \cite{l3,l2}, it is quite probable that in dimension 3 an analogous $\varepsilon$-regularity result near minimal cones of type $\mathbb{Y}$ and $\mathbb{T}$ would be available  using a variant of our work. But for sake of simplicity we have written here only the case of flat cone $\mathbb{P}$ (hyperplane).

Finally let us mention some main differences with the work in \cite{l3,l2}.   Firstly let us say that  our paper is totally self-contained and no statement from \cite{l3,l2} has been directly used. Everything has been re-written with full details, sometimes quite differently and simplified, and we hope that the tools developed in here could be useful for other purposes. In addition, most of the time we had to adapt to the Griffith functional  in a nontrivial manner some of the arguments used in \cite{l3,l2}.

For instance, in the scalar case it is known that the Mumford-Shah minimizers have the ``uniform concentration'' property. Since the proof of that fact relies on the co-area formula, this is a difficult open question for Griffith minimizers, and prevents us to easily  obtain semi-continuity behavior of the surface area for a sequence of Griffith minimizers. Since this was one of the key ingredients in the compactness argument in Lemma \ref{lem_flatness}, we have used a different strategy following the results of the first author in \cite{Lab}. Another difference comes in the proof of Proposition  \ref{prop_energy_decay}, where we proceed differently in order to avoid competitors of the form $\chi(u_n)$, which are not admissible when dealing with the Griffith energy.

%
%
\section{Preliminaries}

Our working space is an open set $\Omega \subset \R^N$, where $N \geq 2$.
We say that a constant is \emph{universal} when it depends only on $N$.
We introduce a few definitions.

\medskip

\noindent{\bf (Coral) pairs.}
We define an \emph{admissible pair} as a pair $(u,K)$ such that $K$ is a relatively closed subset of $\Omega$ and $u \in W^{1,2}_{\mathrm{loc}}(\Omega \setminus K;\R^N)$.
We say that the pair is \emph{coral} if for all $x \in K$ and for all $r > 0$,
\begin{equation*}
    \HH^{N-1}(K \cap B(x,r)) > 0,
\end{equation*}
where $\HH^{N-1}$ is the Hausdorff measure of dimension $N-1$.
For any open set $V$, we define $LD(V)$ as the set of functions $u \in W^{1,2}_{\mathrm{loc}}(V)$ such that $\int_{V} \abs{e(u)}^2 \dd{x} < +\infty$.

\medskip

\noindent{\bf Competitors.}
Let $(u,K)$ be an admissible pair.
Let $x \in \Omega$ and $r > 0$ be such that $\overline{B}(x,r) \subset \Omega$.
A \emph{competitor} of $(u,K)$ in $B(x,r)$ is an admissible pair $(v,L)$ such that
\begin{equation*}
    L \setminus B(x,r) = K \setminus B(x,r) \quad \text{and} \quad v = u \quad \text{a.e. in} \quad \Omega \setminus \left(K \cup B(x,r)\right).
\end{equation*}

\medskip

\noindent{\bf Local minimizers and almost-minimizers.}
In general, a \emph{gauge} is a non-decreasing function $h : (0,+\infty) \to [0,+\infty]$ such that $\lim_{t \to 0^+} h(t) = 0$.
Our main theorem applies only with gauges of the form $h(t) = h(1) t^{\alpha}$, where $\alpha \in (0,1)$, but  it is only at the last section (Section 6) that this will be used.



\begin{definition}\label{def_locmin}
    A \emph{Griffith local almost-minimizer} with gauge $h$ in $\Omega$ is a coral admissible pair $(u,K)$, such that for all $x \in \Omega$, for all $r > 0$ with $\overline{B}(x,r) \subset \Omega$ and for all competitor $(v,L)$ of $(u,K)$ in $B(x,r)$, we have
    \begin{equation*}
        \int_{B(x,r) \setminus K} \abs{e(u)}^2 \dd{x} + \HH^{N-1}(K \cap B(x,r)) \leq \int_{B(x,r) \setminus L} \abs{e(v)}^2 \dd{x} + \HH^{N-1}(L \cap B(x,r)) + h(r) r^{N-1}, \label{eq_minimality0}
    \end{equation*}
    where $e(u)$ is the symmetrized gradient of $u$;
    \begin{equation*}
        e(u) := \frac{D u + D u^T}{2}.
    \end{equation*}
\end{definition}
Note that the definition is local since we only work in balls away from the boundary $\partial \Omega$. In the rest of the paper however, we will omit the word `local' for simplicity.

\begin{remark}
    Notice that adding to $K$ a negligible set does not affect the almost-minimality condition~\eqref{eq_minimality0}. This  explains why one needs to assume $K$ to be a coral set, as we did in Definition \ref{def_locmin}, in order to expect any  kind of regularity result.  
\end{remark}

\begin{remark}[Ahlfors-regularity]
    If $(u,K)$ is a Griffith almost-minimizer in $\Omega$, a standard comparison argument using the minimality condition (\ref{eq_minimality0}) shows that for all $x \in \Omega$ and $r > 0$ with $B(x,r) \subset \Omega$ and $h(r) \leq 1$, we have
    \begin{equation}\label{eq_AF0}
        \int_{B(x,r) \setminus K} \abs{e(u)}^2 \dd{x} + \HH^{N-1}(K \cap B(x,r)) \leq (\sigma_N + 1) r^{N-1},
    \end{equation}
    where $\sigma_N$ is the measure of the unit sphere.
    It also follows from a careful inspection of the proofs in \cite{CCI} (which are based on \cite{CFI}), that there exists universal constants $C \geq 1$ and $\varepsilon_A \in (0,1)$ such that the following holds. If $(u,K)$ is any Griffith almost-minimizer with any gauge $h$, then for all $x \in K$ and $r > 0$ such that $B(x,r) \subset \Omega$ and $h(r) \leq \varepsilon_A$, we have
    \begin{equation*}
        \HH^{N-1}(K \cap B(x,r)) \geq C^{-1} r^{N-1}.
    \end{equation*}
    In view of (\ref{eq_AF0}), we can assume $\varepsilon_A$ smaller and $C$ bigger so that for all $x \in K$ and $r > 0$ such that $B(x,r) \subset \Omega$ and $h(r) \leq \varepsilon_A$, we have
    \begin{equation}\label{eq_AF}
        C^{-1} r^{N-1} \leq \HH^{N-1}(K \cap B(x,r)) \leq C r^{N-1}.
    \end{equation}
    We will frequently need to assume $h(r) \leq \varepsilon_A$ in our statements to make sure that we can use the Ahlfors-regularity property (\ref{eq_AF}).
\end{remark}

\begin{remark}
    As proved in \cite{CCI}, an example of Griffith minimizer is given by $(u,\overline{J_u})$, where $u$ a GSBD-minimizer of the Griffith functional
    \begin{equation*}
        \int_{\Omega} \abs{e(u)}^2 \dd{x} + \HH^{N-1}(J_u)
    \end{equation*}
    with prescribed Dirichlet boundary condition.
\end{remark}


\begin{remark} Contrary to what commonly follows from the standard Mumford-Shah theory, and due to the absence of good $L^\infty$ estimates, it is not known whether  Griffith minimizers of a functional with an additional term of the form $\int_{\Omega} |u-g|^2 \dd{x}$ with $g\in L^\infty$ is an almost-minimizer. However, an example of almost-minimizer if for instance a minimizer of a functional of the form 
    $$\int_{\Omega \setminus K} |e(u)+A|^2 \dd x + \mathcal{H}^{N-1}(K)$$
    as studied recently in \cite{lempak}, for which the result of the present paper in full generality in dimension 2, is used.
\end{remark}

\begin{remark}
    Let $(u,K)$ be a Griffith almost-minimizer with gauge $h$ in $B(x_0,r_0)$.
    The rescaled pair of $(u,K)$ in $B(0,1)$ is given by $(v,L)$ such that
    \begin{equation*}
        v(x) = r_0^{-\frac{1}{2}} u(x_0 + r_0 x) ,\quad L = r_0^{-1}(K - x_0).
    \end{equation*}
    We observe that for all $x \in B(0,1)$ and $r > 0$ with $\overline{B}(x,r) \subset B(0,1)$ and for all competitor $(w,G)$ of $(v,L)$ in $B(x,r)$, we have
    \begin{equation*}
        \int_{B(x,r) \setminus L} \abs{e(v)}^2 \dd{x} + \HH^1(L \cap B(x,r)) \leq \int_{B(x,r) \setminus G} \abs{e(w)}^2 \dd{x} + \HH^1(G \cap B(x,r)) + h(r r_0) r.
    \end{equation*}
    Thus, $(v,L)$ an almost-minimizer with gauge $\tilde{h}(t) := h(r_0 t)$ in $B(0,1)$.
\end{remark}

We will introduce different quantities to study the Griffith almost-minimizers but all of them will be invariant under scaling (the flatness $\beta$, the normalized elastic energy $\omega$ and the bad mass $m$).

\medskip


\medskip

\noindent{\bf The normalized elastic energy.}
Let $(u,K)$ be an admissible pair.
For any $x_0 \in \Omega$ and $r_0 > 0$ such that $B(x_0,r_0) \subset \Omega$, we define the \emph{normalized elastic energy} of $u$ in $B(x_0,r_0)$ as
\begin{equation*}
    \omega_u(x_0,r_0) = \frac{1}{r_0^{N-1}}\int_{B(x_0,r_0) \setminus K} \abs{e(u)}^2 \dd{x}.
\end{equation*}
When there is no ambiguity, we write simply $\omega(x_0,r_0)$ instead of $\omega_u(x_0,r_0)$.

\begin{remark}
    We see that for all ball $B(x,r) \subset B(x_0,r_0)$, we have
    \begin{equation*}
        \omega(x,r) \leq \left(\frac{r_0}{r}\right)^{N-1} \omega(x_0,r_0).
    \end{equation*}
\end{remark}

\medskip

\noindent{\bf The bilateral flatness.}
Let $K$ be a relatively closed subset of $\Omega$.
For any $x_0 \in K$ and $r_0 > 0$ such that $B(x_0,r_0) \subset \Omega$, we define the {\it bilateral flatness} of $K$ in $B(x_0,r_0)$ by
\begin{equation}\label{eq_beta}
    \beta_K(x_0,r_0) := r^{-1} \inf_P \max \Set{\sup_{y \in K \cap B(x_0,r_0)}{\rm dist }(y,P), \sup_{y \in P \cap B(x_0,r_0)}{\rm dist }(y,K)},
\end{equation}
where the infimum is taken over all hyperplanes $P$ through $x_0$.
An equivalent definition is that $\beta_K(x_0,r_0)$ is the infimum of all $\varepsilon > 0$ for which there exists an hyperplane $P$ through $x_0$ such that
\begin{equation}\label{eq_beta2}
    \begin{gathered}
        K \cap B(x_0,r_0) \subset \set{y \in B(x_0,r_0) | \mathrm{dist}(y,P) \leq \varepsilon r_0}\\
        P \cap B(x_0,r_0) \subset \set{y \in B(x_0,r_0) | \mathrm{dist}(y,K) \leq \varepsilon r_0}.
    \end{gathered}
\end{equation}
It is easy to check that the infimum in (\ref{eq_beta}) and (\ref{eq_beta2}) is always attained.
When there is no ambiguity, we write simply $\beta(x_0,r_0)$ instead of $\beta_K(x_0,r_0)$.

\begin{remark}\label{rmk_beta}
    We see that for all $0 < r \leq r_0$,
    \begin{equation}\label{eq_beta_scaling}
        \beta(x_0,r) \leq \frac{r_0}{r} \beta(x_0,r_0)
    \end{equation}
    and for all ball $B(x,r) \subset B(x_0,r_0)$ centred on $K$, we also have
    \begin{equation*}
        \beta(x,r) \leq \frac{2r_0}{r} \beta(x_0,r_0).
    \end{equation*}
    Let $(K_i)_i$ and $K$ be relatively closed subsets of $\Omega$ containing $x_0$ such that $(K_i)_i$ converges to $K$ in local Hausdorff distance\footnote{It means that for all compact set $B \subset \Omega$, for all $\varepsilon > 0$, there exists $i_0$ such that for $i \geq i_0$, $K_i \cap B \subset \set{\mathrm{dist}(\cdot,K) \leq \varepsilon}$ and $K \cap B \subset \set{\mathrm{dist}(\cdot,K_i) \leq \varepsilon}$.
        It is also equivalent to the two equalities
        \begin{equation*}
            K = \set{x \in \Omega | \liminf_i \mathrm{dist}(x,K_i) = 0} = \set{x \in \Omega | \lim_i \mathrm{dist}(x,K_i) = 0}.
    \end{equation*}}.
    Then for all $0 < r < r_0$ such that $B(x_0,2r_0) \subset \Omega$, we have
    \begin{equation}\label{eq_beta_lim}
        \left(\frac{r}{r_0}\right) \limsup_i \beta_{K_i}(x_0,r) \leq \beta_K(x_0,r_0) \leq \liminf_i \beta_{K_i}(x_0,r_0).
    \end{equation}
    At the left-hand side of (\ref{eq_beta_lim}), we are forced to work with a radius $r < r_0$ because certain parts of $K_i \cap B(0,r_0)$ could contribute to $\beta_{K_i}(x_0,r_0)$ while converging to $\partial B(0,r_0)$ (so $\beta_K(x_0,r_0)$ won't see them).
\end{remark}







\medskip

\noindent{\bf The separating condition.}
The separation in a ball will be fundamental in our analysis.
We will consider the situation where $K \cap B(x_0,r_0)$ is contained in a narrow strip of thickness $\varepsilon r_0$ for some $\varepsilon \in ]0,1/2]$, i.e., there exists an hyperplane $P$ passing through $x_0$ such that
\begin{equation*}
    K \cap B(x_0,r_0) \subset \set{x \in B(x_0,r_0) | \mathrm{dist}(x,P) \leq \varepsilon r_0}.
\end{equation*}
We can then define two balls $D^+(x_0,r_0)$ and $D^{-}(x_0,r_0)$ of radius $r_0/4$ and such that $D^\pm(x_0,r_0) \subset B(x_0,r_0) \setminus K$.
Indeed, set $x_0^\pm := x_0 \pm (3/4) r_0 \nu$, where $\nu$ is a normal unit vector to $P$.
We can check that $D^\pm(x_0,r_0) := B(x_0^\pm,r_0/4)$ satisfy the above requirements.

\begin{definition}\label{defi_separation}
    Let $K$ be a relatively closed subset of $\Omega$, let $x_0 \in K$ and $r_0 > 0$ be such that $B(x_0,r_0) \subset \Omega$.
    We say that $K$ \emph{separates} $B(x_0,r_0)$ if there exists an hyperplane $P$ through $x_0$ satisfying
    \begin{equation*}
        K \cap B(x_0,r_0) \subset \set{x \in B(x_0,r_0) | \mathrm{dist}(x,P) \leq r_0/2}
    \end{equation*}
    and such that the corresponding balls $D^\pm(x_0,r_0)$ are contained in distincts connected components of $B(x_0,r_0) \setminus K$.
\end{definition}

\begin{remark}[The separating property does not depend on a particular hyperplane]
    If $K$ separates $B(x_0,r_0)$ with respect to an hyperplane $P$ passing through $x_0$ and if $Q$ is another hyperplane passing through $x_0$ satisfying
    \begin{equation*}
        K \cap B(x_0,r_0) \subset \set{x \in B(x_0,r_0) | \mathrm{dist}(x,Q) \leq r_0/2},
    \end{equation*}
    then $K$ also separates with respect to $Q$.
    For the proof, let $\nu_P$ and $\nu_Q$ be unit normal vectors to $P$ and $Q$ respectively.
    We orient them in such a way that $\nu_P \cdot \nu_Q \geq 0$.
    The sets
    \begin{equation*}
        \begin{gathered}
            \set{y \in B(x_0,r_0) | (y - x_0) \cdot \nu_P > r_0/2},\\
            \set{y \in B(x_0,r_0) | (y - x_0) \cdot \nu_Q > r_0/2}
        \end{gathered}
    \end{equation*}
    are connected, disjoint from $K$ and they have a nonempty intersection because $\nu_P \cdot \nu_Q \geq 0$.
    Therefore, their union $U^+$ is a connected subset of $B(x_0,r_0) \setminus K$.
    Similarly, the union of the sets
    \begin{equation*}
        \begin{gathered}
            \set{y \in B(x_0,r_0) | (y - x_0) \cdot \nu_P < -r_0/2},\\
            \set{y \in B(x_0,r_0) | (y - x_0) \cdot \nu_Q < -r_0/2},
        \end{gathered}
    \end{equation*}
    denoted by $U^-$, is a connected subset of $B(x_0,r_0) \setminus K$.
    As $K$ separates with respect to $P$, the sets $U^+$ and $U^-$ must be contained in distincts connected components of $B(x_0,r_0) \setminus K$ and we deduce that $K$ separates with respect to $Q$.
\end{remark}

\begin{remark}[Equivalence of unilateral flatness and bilateral flatness for separating sets]\label{rmk_beta_equivalence}
    If $B(x_0,r) \subset \Omega$, $K$ separates $B(x_0,r_0)$ and there exists $0 < \varepsilon \leq 1/2$ such that
    \begin{equation*}
        K \cap B(x_0,r_0) \subset \set{x \in B(x_0,r_0) | \mathrm{dist}(x, P) \leq \varepsilon r_0},
    \end{equation*}
    then we necessarily have
    \begin{equation*}
        P \cap B(x_0,r_0) \subset \set{x \in B(x_0,r_0) | \mathrm{dist}(x, K) \leq 2 \varepsilon r_0}.
    \end{equation*}
    Thus, for a separating set, the unilateral and bilateral flatness are equivalent by a factor $2$.
    For the proof, let $\nu$ be a unit vector $\nu$ to $P$.
    For all $x \in P \cap B(x_0,(1 - \varepsilon) r_0)$, the segment $x + [- \varepsilon r_0 \nu, \varepsilon r_0 \nu]$ is contained in $B(x_0,r_0)$ and it must meet $K$ otherwise it can be used to connect $D^\pm(x_0,r_0)$ in $B(x_0,r_0) \setminus K$.
    This shows that $\mathrm{dist}(x,K) \leq \varepsilon r_0$.
    And for all others $x \in P \cap B(x_0,r_0)$, we can find $y \in P \cap B(x_0,(1 - \varepsilon) r_0)$ such that $\abs{x - y} \leq \varepsilon r_0$ so $\mathrm{dist}(x,K) \leq 2 \varepsilon r_0$.
\end{remark}

The following lemma guarantees that when passing from a ball $B(x_0,r_0)$ to a smaller one $B(x, \tau r_0)$, the separation property is preserved if $\beta(x_0,r_0)$ is small enough compared to $\tau$.
We will see later a variant of this result, Lemma \ref{lem_orientation}, where the separation property is preserved in $B(x,r)$ if $\beta(x,t)$ stays small at all intermediate scales $t \in [r,r_0/4]$.
\begin{lemma}\label{lem_separation}
    Let $K$ be a relatively closed subset of $\Omega$, let $x_0 \in K$ and $r_0 > 0$ be such that $B(x_0,r_0) \subset \Omega$ and and $K$ separates $B(x_0,r_0)$.
    Then for all $x \in K \cap B(x_0,r_0)$, for all $\tau > 0$ such that $B(x, \tau r_0) \subset B(x_0,r_0)$ and $\beta(x_0,r_0) \leq \tau/4$, the set $K$ still separates $B(x,r)$.
\end{lemma}
\begin{proof}
    Let $P_0$ be a hyperplane passing through $x_0$ which achieves the minimum in the definition of $\beta(x_0,r_0)$, i.e.,
    \begin{equation*}
        K \cap B(x_0,r_0) \subset \set{x \in B(x_0,r_0) | \mathrm{dist}(x, P_0) \leq \varepsilon_0 r_0},
    \end{equation*}
    where $\varepsilon_0 := \beta(x_0,r_0)$.
    Let $P$ be the hyperplan parallel to $P_0$ and passing through $x$.
    Since $x$ is at distance $\leq \varepsilon_0 r_0$ from $P_0$, the hyperplane $P_0$ is also at distance $\leq \varepsilon_0 r_0$ from $P$ and it follows that
    \begin{align*}
        K \cap B(x,\tau r_0) &\subset \set{y \in B(x,\tau r_0) | \mathrm{dist}(y, P_0) \leq \varepsilon_0 r_0}\\
                             &\subset \set{y \in B(x,\tau r_0) | \mathrm{dist}(y, P) \leq \varepsilon \tau r_0}
    \end{align*}
    where $\varepsilon = 2 \varepsilon_0 / \tau \leq 1/2$.
    The two connected components of $B(x,r) \setminus \set{\mathrm{dist}(\cdot, P) \leq \varepsilon \tau r_0}$ are contained in respective connected components of $B(x_0,r_0) \setminus \set{\mathrm{dist}(\cdot,P) \leq \varepsilon_0 r}$.
    We deduce that $K$ separates them as well.
\end{proof}

\begin{definition}[Hypothesis-$H(\varepsilon_0,x_0,r_0)$]\label{def_hypH}
    Let $K$ be a relatively closed subset of $\Omega$, let $x_0 \in K$, $r_0 > 0$ and $\varepsilon_0 \in [0,1/2]$.
    We say that $K$ satisfies Hypothesis-$H(\varepsilon_0,x_0,r_0)$ if $B(x_0,r_0) \subset \Omega$ and if $K$ satisfies the two following assumptions:
    \begin{enumerate}[label=\roman*)]
        \item $K$ separates $B(x_0,r_0)$ (as in Definition \ref{defi_separation});
        \item $\beta(x_0,r_0)\leq \varepsilon_0$.
    \end{enumerate}
    In this case we can define $D^\pm(x_0,r_0)$ as in Definition \ref{defi_separation} and we can also define $\Omega_h(x_0,r_0)$, for $h=1,2$ (or simply $\Omega_h$), the two connected components of $B(x_0,r_0)\setminus K$ that contains $D^\pm(x_0,r_0)$, respectively. 
\end{definition}
The set $B(x_0,r_0) \setminus K$ might have other connected components than $\Omega_1$ and $\Omega_2$ but they are contained in a narrow strip of thickness $\varepsilon_0 r$ passing through the center of the ball.

\medskip

\noindent{\bf Geometric functions.}
Geometric functions are Lipschitz functions $\delta : K \cap \overline{B}(x_0,3r_0/4) \to [0,r_0/4]$ that we will use to build a covering $(B(x_i, \delta(x_i))_i$ of balls centred in $K$ with good overlapping properties.
We will take advantage of them in Section \ref{section_extension} to build extensions via a partition of unity.

\begin{definition}[Geometric function]
    Let $K$ be a relatively closed subset of $\Omega$, let $x_0 \in K$, $r_0 > 0$, $\varepsilon_0 \in [0,1/2]$ be such that $K$ satisfies Hypothesis-$H(\varepsilon_0,x_0,r_0)$.
    Let $\rho \in [r_0/2,3r_0/4]$ and $\tau \in [8\varepsilon_0, 1/2]$.
    Then we say that a $100$-Lipschitz function $$\delta: K \cap \overline{B}(x_0,\rho) \to [0,r_0/4]$$ is a geometric function with parameters $(\rho, \tau)$ if for all $x\in K \cap \overline{B}(x_0,\rho)$ and for all $r \in (\delta(x), r_0/4]$, we have
    \begin{equation}\label{defi_gf_beta}
        \beta(x,r)\leq \tau.
    \end{equation}
\end{definition}

Let us make a few comments.
We excluded the case $r = \delta(x)$ in \eqref{defi_gf_beta} because the flatness $\beta(x,\delta(x))$ would not be defined when $\delta(x) = 0$.
That being said, if $\delta(x) > 0$, we have necessarily $\beta(x,\delta(x)) \leq \tau$.
To justify it, we distinguish two cases.
If $0 < \delta(x) < r_0/4$, we use the fact that for all $r \in (\delta(x),r_0/4)$,
\begin{equation*}
    \beta(x,\delta(x)) \leq \frac{r}{\delta(x)} \beta(x,r) \leq \frac{r}{\delta(x)} \tau.
\end{equation*}
If $\delta(x) = r_0/4$, we have $\beta(x,\delta(x)) \leq \tau$ as well because of the scaling property of $\beta$ and because $\varepsilon_0 \leq \tau/8$.

A simple example of geometric function is the constant function $\delta = 2 \tau^{-1} \beta(x_0,r_0) r_0 \leq r_0/4$. We have indeed by the scaling property of $\beta$,
\begin{equation*}
    \beta(x,\delta) \leq 2 \left(\frac{r_0}{\delta}\right) \beta(x_0,r_0) \leq \tau.
\end{equation*}

\begin{remark}[The Lipschitz condition]\label{rmk_delta}
    The Lipschitz property of $\delta$ implies that two balls with nonempty intersection have comparable radii.
    More precisely, let us fix $0 \leq t \leq 1/300$ and let us consider $x, y \in K \cap \overline{B}(x_0,\rho)$ such that $\overline{B}(x,t \delta(x))$ and $\overline{B}(y, t \delta(y))$ meet. Then we have
    \begin{equation*}
        \abs{\delta(x) - \delta(y)} \leq 100 \abs{x - y} \leq  100(t\delta(x)+t\delta(y))\leq \tfrac{1}{3}\left( \delta(x) + \delta(y)\right)
    \end{equation*}
    whence
    \begin{equation}\label{eq_rmk_delta}
        \tfrac{1}{2} \delta(x) \leq \delta(y) \leq 2 \delta(x) \quad \text{and} \quad \abs{x - y} \leq 3 t \delta(x).
    \end{equation}
    In particular, for all $y \in K \cap \overline{B}(x_0,\rho) \cap \overline{B}(x,t \delta(x))$, we have (\ref{eq_rmk_delta}) as well.
\end{remark}

In Lemma \ref{lem_orientation} below, we will see that the condition \ref{defi_gf_beta} of geometric functions implies that in each ball $B(x,\delta(x))$, the set $K$ still separates and the two main components of $B(x,t \delta(x)) \setminus K$ are still subsets of $\Omega_1$ and $\Omega_2$ respectively.


\section{The Extension Lemma}\label{section_extension}


{\color{black}
    We start by motivating this section and introducing one of the main techniques of the next sections.
    We consider a Griffith minimizer $(u,K)$ in $\Omega$ and $x_0 \in K$, $r_0 > 0$ such that $B(x_0,r_0) \subset \Omega$ and $K$ separates $B(x_0,r_0)$.
    We let $L = f(K)$ be a deformation of $K$ such that $L = K$ in $\Omega \setminus B(x_0,r_0)$.
    We would like to use the minimality of $(u,K)$ to compare $\HH^{N-1}(K \cap B(x_0,r_0))$ and $\HH^{N-1}(L \cap B(x_0,r_0))$.
    If $K$ is a smooth surface, it could be possible to build a function $v \in W^{1,2}_{\mathrm{loc}}(\Omega \setminus L)$ such that $v = u$ in $\Omega \setminus B(x_0,r_0)$ and
    \begin{equation*}
        \int_{B(x_0,r_0) \setminus L} \abs{e(v)}^2 \dd{x} \leq C \int_{B(x_0,r_0) \setminus K} \abs{e(u)}^2 \dd{x}.
    \end{equation*}
    Then the energy comparison between $(u,K)$ and $(v,L)$ yield
    \begin{equation}\label{eq_energy_control}
        \HH^{N-1}(K \cap B(x_0,r_0)) \leq \HH^{N-1}(L \cap B(x_0,r_0)) + \omega(x_0,r_0) r_0^{N-1}.
    \end{equation}
    Here, we see that the energy control the minimality of $K$ under deformation.
    Once we will prove that $\omega$ decays as a power, (\ref{eq_energy_control}) will allow us to conclude that $K$ has an almost-minimal area under deformation. Then our $\varepsilon$-regularity theorem will follow from the regularity theory of almost-minimal sets.
    The goal of this section is to build such a function $v$ without assuming $K$ smooth a priori.
    The lack of regularity of $K$ will force us to add a certain ``wall set'' $\ZZ$ where the extension $v$ does not connect well with $u$ and where we cannot estimate its elastic energy.
}

To simplify, let us say that $K$ delimits two sides $\Omega_1$ and $\Omega_2$ in $B(x_0,r_0)$.
For some $\rho \in (1/2,3/4)$, we partially cover $K \cap \overline{B}(x_0,\rho)$ by a sequence of balls $(B_i)_i = (B(x_i,r_i))_i$ centred on $K$ which are induced by a geometric function (i.e., $r_i \sim \delta(x_i)$ for some geometric function $\delta$).
Then for each $h = 1,2$, we want to build a Sobolev function $v_h$ on $\Omega_h \cup \bigcup_i 10 B_i$ which is a kind of extension of $u\vert_{\Omega_h}$.
Without loss of generality, let us focus on $v_1$.
For an isolated ball $B_i$, we would define $v_1$ in $10 B_i \cup \Omega_1$ as follow. We set $v_1$ as a well-chosen rigid motion in $9 B_i$ and then $v_1 = u$ in $\Omega_1 \setminus 10 B_i$ whereas the part $\Omega_1 \cap 10 B_i \setminus 9 B_i$ is a transition area.
However, the lack of Korn-Poincaré inequality prevent us from estimating the energy of $v_1$ in the part of the transition area that is near $K$.
Therefore, we never consider isolated balls but we build $v_1$ via a partition of unity with respect to the family $(B_i)$, taking advantage of the overlapping properties induced by $\delta$.
Each of these balls is equipped with an orientation $\nu_i$, which is a unit vector pointing in the direction of $\Omega_1$.
We will need that when two balls $B_i$ and $B_j$ meet, their radii are comparable and their orientation are very close.
We will also need that for all ball $B_i$, the part of the transition area $\Omega_1 \cap 10 B_i \setminus 9 B_i$ at distance $\leq r_i$ from $K$ is covered by other balls $9 B_j$, with $j \ne i$. Thus, $v_1$ is an interpolation of ridig motion in the bad part the transition area near $K$ and this allows to estimate the elastic energy of $v$ without the Korn-Poincaré inequality.
However, we cannot estimate the energy of $v_1$ in the balls $B_i$ which meet $\partial B(x_0,\rho)$ because we cannot find other balls $9 B_j$ anymore to cover the bad part of the transition area. This limitation will appear in item \ref{lem_W_covering2} of Lemma \ref{lem_W}.
Therefore, our wall set $\ZZ$ will be composed of all the balls $B_i$ which meet $\partial B(x_0,\rho)$.
As said above, this is where we don't estimate the energy of $v$ and where $v$ might not connect with $u$.

{\color{black}
    Let us mention that there could be other ways to extend $v_1$ and that it is typical for extensions Lemma to add a wall set $\ZZ$ near $K \cap \partial B(x_0,\rho)$, for some $\rho \in (1/2,3/4)$, where the extension does not connect with the original function.
    The simplest example \cite[Lemma 4.2]{bil} which adds a wall set of size $\beta(x_0,r_0) r_0^{N-1}$, namely
    \begin{equation*}
        \set{x \in \partial B(x_0,\rho) | \mathrm{dist}(x,P) \leq \beta(x_0,r_0) r_0},
    \end{equation*}
    where $P$ is an hyperplane which achieves the infimum in the definition of $\beta(x_0,r_0)$.
    The measure of the wall set then appears as an error term in the estimates that involve the construction of a competitor by extension.
    We cannot use \cite[Lemma 4.2]{bil} however because an error term like $\beta(x_0,r_0) r_0^{N-1}$ would not be good enough to bootstrap our joint decay Lemma \ref{lem_decay}.
    In our extension technique, the size of the wall set $\ZZ$ is more finely controlled by the chosen geometric function $\delta$ inducing $(B_i)_i$.

    Taking $\delta \sim \beta(x_0,r_0) r_0$ would yield a wall set of size $\beta(x_0,r_0) r_0^{N-1}$ similar to \cite[Lemma 4.2]{bil}. But if $K$ is flat enough near $\partial B(x_0,\rho)$, we could rather consider a geometric function $\delta$ such that $\delta(x) \to 0$ when $x$ gets closer to $\partial B(x_0,\rho)$. The radius $r_i$ of the balls $B_i$ would tend to $0$ as $x_i \to \partial B(x_0,\rho)$ and possibly fast enough so that that no balls $B_i$ would meet $\partial B(x_0,\rho)$, resulting in an empty wall set $\ZZ$.
    However, we stress that the existence of this kind of geometric function is already a regularity property of $K$ near $\partial B(x_0,r_0)$.

    In Section \ref{section_badmass}, we will build a certain geometric function $\delta$ for which the size of the wall set $\ZZ$ is controlled by $\beta(x_0,r_0) m(x_0,r_0)$, where $m$ is a quantity called `bad mass'.
    More precisely, $\ZZ$ will be contained in an open set $O$ such that $\HH^{N-1}(\partial O) \leq C \beta(x_0,r_0) m(x_0,r_0)$. We will set $v = 0$ in $O$, add $\partial O$ to the crack and this will be penalized in the energy of $v$ by an error term $\beta(x_0,r_0) m(x_0,r_0)$.
    The bad mass can be interpreted as a way of measuring how $K$ differs from being Reifenberg-flat\footnote{We say that a relatively closed subset $K \subset B(x_0,r_0)$ is $\tau$-Reifenberg flat in $B(x_0,r_0)$ for some $\tau > 0$ provided that for all $x \in K \cap B(x_0,9r_0/10)$ and for all $0 < r \leq r_0/10$, we have $\beta_K(x,r) \leq \tau$.}. In the particular case where $K$ is Reifenberg-flat in $B(x_0,r_0)$, we have $m(x_0,r_0) = 0$ and the geometric function $\delta$ of Section \ref{section_badmass} does not induce a wall set.
}

We are now ready to write our extension Lemma.
Let $K$ be a relatively closed subset of $\Omega$.
Let $x_0 \in K$, $r_0 > 0$ and $\varepsilon_0 \in [0,1/2]$ be such that $K$ satisfies Hypothesis-$H(\varepsilon_0,x_0,r_0)$.
We consider two parameters $\rho \in [r_0/2, 3 r_0/4]$ and $\tau \in [8\varepsilon_0,1/2]$ and a geometric function $\delta$ with parameters $(\rho,\tau)$.
\begin{equation}\label{eq_U}
    \text{We fix $\U := 10^5$ and we assume in addition that $\tau \leq 10^{-8}$.}
\end{equation}

For $x \in \overline{B}(x_0,\rho)$, we set
\begin{equation*}
    r_x := \delta(x) / \U
\end{equation*}
and we define $B_x$ as the open ball $B(x,r_x)$, which is possibly empty.
We will work with balls like $B_x$, $10 B_x$, $50 B_x$ and they are small enough to apply Remark \ref{rmk_delta} because $50 r_x \leq \delta(x) / 300$.
We define
\begin{equation*}
    \W = \bigcup_{x \in K \cap \overline{B}(x_0,\rho)} B(x,r_x) \qquad \text{and} \qquad \WW = \bigcup_{x \in K \cap \overline{B}(x_0,\rho)} B(x,10 r_x)
\end{equation*}
They are open subsets of $B(x_0,r_0)$ containing $\set{x \in K \cap \overline{B}(x_0,\rho) | \delta(x) > 0}$ and by Remark \ref{rmk_delta}, they do not contain the points $x \in K \cap \overline{B}(x_0,\rho)$ such that $\delta(x) = 0$.
We introduce the open set
\begin{equation*}
    \ZZ := \bigcup \set{B(x,10 r_x) | x \in K \cap \overline{B}(x_0,\rho),\ 50 B(x,r_x) \cap \partial B(x_0,\rho) \ne \emptyset}
\end{equation*}
which will be our wall set.
And finally, we define for $h = 1,2$,
\begin{equation*}
    V_h  := \Omega_h \cup \W.
\end{equation*}
It is an open set of $B(x_0,r_0)$ which extends $\Omega_h$ by adding the nonempty balls $B_x$ centered on $K$.
We will see later in Lemma \ref{lem_W} that $W$ covers the connected components of $\overline{B}(x_0,r_0) \setminus K$ that are not contained in $\Omega_1$ or $\Omega_2$, i.e.,
\begin{equation*}
    \overline{B}(0,\rho) \setminus \left(K \cup \Omega_1 \cup \Omega_2\right) \subset W.
\end{equation*}
The rest of this section is devoted to prove the following Lemma.

\begin{lemma}[Extension Lemma]\label{lem_extension}
    Under the notation above, for all function $u \in LD(B(x_0,r_0) \setminus K)$ and for all $h = 1,2$, there exists a function $v_h \in LD_{\mathrm{loc}}(V_h)$ and a relatively relatively subset $S_h$ of $V_h$ such that
    \begin{equation*}
        \W \subset S_h \subset \WW,\qquad
        v_h = u \ \text{in} \ V_h \setminus S_h
    \end{equation*}
    and
    \begin{equation*}
        \int_{V_h \cap B(x_0,\rho) \setminus \ZZ} \abs{e(v_h)}^2 \dd{x} \leq C \int_{B(x_0,\rho) \cap \Omega_h} \abs{e(u)}^2 \dd{x},
    \end{equation*}
    for some universal constant $C \geq 1$.
\end{lemma}

{\color{black}
    Here one can think of $S_h$ as being essentially equal to $\WW$ but it will be more convenient to have $S_h$ being a relatively closed subset of $V_h$. The set $\ZZ$ plays the role of a domain around $K \cap \partial B(x_0,\rho)$ where we cannot estimate the energy of $v_h$, neither make sure that $v_h$ connects with $u_h$.
    In Section \ref{section_badmass}, we will build a geometric function $\delta$ for which the size of $\ZZ$ is related to the `bad mass', a quantity which measures how much $K$ differs from being Reifenberg-flat.
}

\begin{remark}
    Using the inclusion $S_h \subset \WW$ and the definition of $\ZZ$, we see that
    \begin{equation*}
        S_h \setminus B(x_0,\rho) \subset \ZZ
    \end{equation*}
    and as $v_h = u$ in $V_h \setminus S_h$, we deduce that
    \begin{equation*}
        v_h = u \quad \text{in} \quad V_h \setminus \left(B(x_0,\rho) \cup \ZZ\right).
    \end{equation*}
    We can therefore also control the energy of $v$ in the larger domain $V_h \setminus \ZZ$;
    \begin{equation*}
        \int_{V_h \setminus \ZZ} \abs{e(v_h)}^2 \dd{x} \leq C \int_{B(x_0,r_0)} \abs{e(u)}^2 \dd{x}.
    \end{equation*}
\end{remark}

\subsection{The Orientation Lemma} 

{\color{black}
    In the next Lemma, we assume that $K$ separates $B(x_0,r_0)$ and that $K \cap B(x_0,3r_0/4)$ is covered by balls $B(x,r)$ such that $\beta(x,t)$ stays small at all intermediate scales $t \in [r,r_0/4]$. We then show that in these balls, $K$ still separates and we can keep track of the $\Omega_1$ side and the $\Omega_2$ side.
    In fact, they cover all the connected components of $B(x_0,3r_0/4) \setminus K$ which are not $\Omega_1$ or $\Omega_2$.
    We show in addition that the approximation plane of $K$ does not change too much when passing from a ball $B(y,s)$ to a smaller ball $B(x,t)$ with comparable radius.
    This is related to the fact that when the bilateral flatness is small, all the approximation planes must be close to each other.
}
\begin{lemma}[Orientation Lemma]\label{lem_orientation}
    Let $K$ be a relatively closed subset of $\Omega$.
    Let $x_0 \in K$, $r_0 > 0$ and $\varepsilon_0 \in [0,1/2]$ be such that $K$ satisfies Hypothesis-$H(\varepsilon_0,x_0,r_0)$.
    Let $\Omega_1$, $\Omega_2$ be the connected components of $B(x_0,r_0) \setminus K$ introduced in Definition \ref{def_hypH}.
    We assume that there exists positive numbers $\rho \in [r_0/2, 3 r_0/4]$, $\varepsilon \in [8 \varepsilon_0,10^{-3}]$ and a function
    \begin{equation*}
        \begin{array}{rcl}
            K \cap \overline{B}(x_0,\rho) & \longrightarrow & [0,r_0/4] \\
            x & \longmapsto & r_x
        \end{array}
    \end{equation*}
    such that for all $x \in K \cap \overline{B}(x_0,\rho)$, for all $r \in ]r_x,r_0/4]$, we have
    \begin{equation*}
        \beta(x,r) \leq \varepsilon.
    \end{equation*}
    Then for all $x \in K \cap \overline{B}(x_0,\rho)$ and for all $r \in ]r_x,r_0/4]$, there exists a unit vector $\nu_x \in \mathbf{S}^{N-1}$ such that
    \begin{gather*}
        \set{y \in B(x,r) | (y - x) \cdot \nu_x > \varepsilon r} \subset \Omega_1\\
        \set{y \in B(x,r) | (y - x) \cdot \nu_x < -\varepsilon r} \subset \Omega_2.
    \end{gather*}
    and in particular
    \begin{equation*}
        K \cap B(x,r) \subset \set{y \in B(x,r) | \abs{(y - x) \cdot \nu_x} \leq \varepsilon r}
    \end{equation*}
    In addition, whenever for some $x, y \in K \cap \overline{B}(x_0,\rho)$ and $t \in ]r_x,r_0/4]$, $s \in ]r_y,r_0/4]$, we have $B(x,t) \subset B(y,s)$ and $t \geq s/10$, then $\nu_x$ and $\nu_y$ are close to each other in the sense that
    \begin{equation*}
        \nu_x \cdot \nu_y \geq 1 - 100 \varepsilon.
    \end{equation*}
    Finally, we have
    \begin{equation}\label{eq_reifenberg_inclusion}
        \overline{B}(x_0,\rho) \setminus \left(K \cup \Omega_1 \cup \Omega_2\right) \subset \bigcup_{x \in K \cap \overline{B}(0,\rho)} B(x,r_x).
    \end{equation}
\end{lemma}

\begin{proof}
    We fix $x \in K \cap \overline{B}(x_0,\rho)$. We start by justifying that the property holds in the ball $B(x,r_0/4)$. We proceed as in the usual proof of the inequality $\beta(x,r_0/4) \leq 8 \beta(x_0,r_0)$.
    According to Hypothesis-$H(\varepsilon_0,x_0,r_0)$, there exists a unit vector $\nu_0$ such that
    \begin{equation}\label{eq_P0_a}
        K \cap B(x_0,r_0) \subset \set{x \in B(x_0,r_0) | \abs{(x - x_0) \cdot \nu_0} \leq \varepsilon_0 r_0}.
    \end{equation}
    Moreover, we can orient $\nu_0$ in such a way that
    \begin{gather*}
        \set{y \in B(x_0,r_0) | (y - x_0) \cdot \nu_0 > \varepsilon_0 r_0} \subset \Omega_1\\
        \set{y \in B(x_0,r_0) | (y - x_0) \cdot \nu_0 < -\varepsilon_0 r_0} \subset \Omega_2.
    \end{gather*}
    Since $x \in K \cap B(x_0,r_0)$, we have $\abs{(x - x_0) \cdot \nu_0} \leq \varepsilon_0 r_0$ by (\ref{eq_P0_a}).
    For all $y \in B(x,r_0/4)$ such that $(y - x) \cdot \nu_0 > \varepsilon (r_0/4)$, we have $(y - x) > 2 \varepsilon_0 r_0$ because $\varepsilon_0 \geq 8 \varepsilon$ and we deduce that
    \begin{align*}
        (y - x_0) \cdot \nu_0   &= (y - x) \cdot \nu_0 + (x - x_0) \cdot \nu_0\\
                                &> \varepsilon_0 r_0,
    \end{align*}
    whence $y \in \Omega_1$. One can deal similarly with the other side.

    Next, we consider two points $x,y \in K \cap \overline{B}(x_0,\rho)$ and two radii $t \in ]r_x,r_0/4]$, $s \in ]r_y,r_0/4]$, such that $B(x,t) \subset B(y,s)$ and $t \geq s/10$.
    We assume that $B(y,s)$ satisfy the property and we are going to deduce that $B(x,t)$ satisfies it as well.
    Let $\nu_y$ be a unit vector such that
    \begin{equation*}
        K \cap B(y,s) \subset \set{z \in B(y,s) | \abs{(z - y) \cdot \nu_y} \leq \varepsilon s}
    \end{equation*}
    and
    \begin{gather*}
        A_1(y,s) := \set{z \in B(y,s) | (z - y) \cdot \nu_y > \varepsilon s} \subset \Omega_1\\
        A_2(y,s) := \set{z \in B(y,s) | (z - y) \cdot \nu_y < -\varepsilon s} \subset \Omega_2.
    \end{gather*}
    Since $\beta(x,t) \leq \varepsilon$, there also exists a unit vector $\nu_x$ such that
    \begin{equation}\label{eq:dist_Px}
        K \cap B(x,t) \subset \set{z \in B(x,t) | \abs{(z - x) \cdot \nu_x} \leq \varepsilon t}.
    \end{equation}
    Without loss of generality, we orient $\nu_x$ so that $\nu_x \cdot \nu_y \geq 0$.

    In this paragraph, we use repeateadly the fact that $\varepsilon \leq 10^{-3}$ without mention.
    As $x \in B(y,s)$, we know that $\abs{(x - y) \cdot \nu_y} \leq \varepsilon s$.
    For $u \in \nu_y^\perp \cap B(0,(1 - 20 \varepsilon) t)$, the segment
    \begin{equation*}
        (x + u) + [-20 \varepsilon t, 20 \varepsilon t] \nu_y
    \end{equation*}
    is contained in $B(x,t) \subset B(y,s)$ and it connects $A_1(y,s)$ to $A_2(y,s)$ (here we use $\abs{(x - y) \cdot \nu_y} \leq \varepsilon s$ and $t \geq s/10$).
    The segment crosses $K$ at some point $z$ otherwise $A_1(y,s)$ and $A_2(y,s)$ would be connected in $B(x_0,r_0) \setminus K$.
    Equation (\ref{eq:dist_Px}) says that $\abs{(z  - x) \cdot \nu_x} \leq \varepsilon t$ and it implies that $\abs{u \cdot \nu_x} \leq 21 \varepsilon t$.
    We conclude by homogeneity of the hyperplane $\nu_y^\perp$ that for all $u \in \nu_y^\perp$, we have
    \begin{equation*}
        \abs{u \cdot \nu_x} \leq \frac{21 \varepsilon}{1 - 20 \varepsilon} \abs{u} \leq 100 \varepsilon \abs{u}.
    \end{equation*}
    Taking in particular $u := \nu_x - (\nu_x \cdot \nu_y) \nu_y$ (the orthogonal projection of $\nu_x$ on $\nu_y^\perp$), we obtain
    \begin{equation*}
        1 - (\nu_x \cdot \nu_y)^2 \leq 100 \varepsilon.
    \end{equation*}
    Since $\nu_x \cdot \nu_y \geq 0$, we have
    \begin{equation*}
        1 - (\nu_x \cdot \nu_y)^2 = (1 - (\nu_x \cdot \nu_y)) (1 + (\nu_x \cdot \nu_y)) \geq 1 - \nu_x \cdot \nu_y,
    \end{equation*}
    whence the estimate on $\nu_x \cdot \nu_y$.
    Next, we want to show that
    \begin{gather*}
        A_1(x,t) := \set{z \in B(x,t) | (z - x) \cdot \nu_x > \varepsilon t}  \subset \Omega_1\\
        A_2(x,t) := \set{z \in B(x,t) | (z - x) \cdot \nu_x < -\varepsilon t} \subset \Omega_2.
    \end{gather*}
    The idea is that $A_1(x,t)$ and $A_1(y,s)$ are two connected subsets of $B(x_0,r_0) \setminus K$ with a nonempty intersection (try the point $x + (t/2) \nu_x$ with $\varepsilon$ small enough).
    Therefore, the union $A_1(x,t) \cup A_1(y,s)$ is also a connected subset of $B(x_0,r_0) \setminus K$.
    The union $A_1(x,t) \cup A_1(y,s)$ meets $\Omega_1$ because $A_1(y,s)$ does and since $\Omega_1$ is a connected component of $B(x_0,r_0) \setminus K$, it is completely contained in $\Omega_1$.

    Now we fix $x \in \overline{B}(x_0,\rho)$ and we want to show that for all $r \in ]r_x,r_0/4]$, the ball $B(x,r)$ satisfies the property. We can prove it by induction because it holds for $r = r_0/4$ and if it holds for some $r \in ]r_x,r_0/4]$, then it also holds for all $r' \in ]\max(r_x,r/10),r]$ by the previous paragraph, and we can iterate until we reach $r_x$.

    We come finally to the proof of the inclusion (\ref{eq_reifenberg_inclusion}).
    It relies on the following intermediate result.
    Let $x \in K \cap \overline{B}(x_0,\rho)$, let $r > 0$ be such that $B(x,r) \subset B(x_0,r_0)$ and let $\nu$ be a unit vector such that
    \begin{gather*}
        \set{y \in B(x,r) | (y - x) \cdot \nu > \varepsilon r} \subset \Omega_1\\
        \set{y \in B(x,r) | (y - x) \cdot \nu < -\varepsilon r} \subset \Omega_2.
    \end{gather*}
    Then, for all $y \in \overline{B}(x_0,\rho) \cap \overline{B}(x, 3r/4) \setminus (K \cup \Omega_1 \cup \Omega_2)$, there exists a point $x' \in K \cap \overline{B}(x_0,\rho)$ such that 
    \begin{equation*}
        \abs{y - x'} \leq 2 \varepsilon r.
    \end{equation*}
    The assumptions are satisfied for the ball $B(x,r) = B(x_0,r_0)$ and all the balls $B(x,r)$ with $x \in K \cap \overline{B}(x_0,\rho)$ and $r \in ]r_x,r_0/4]$.
    The proof of the intermediate result relies on a usual argument but we need to make sure that $x'$ stays in $\overline{B}(x_0,\rho)$.
    We consider $z := y - \varepsilon r (y - x_0) / \rho$, a small translation of $y$ which satisfies $\abs{z - y} \leq \varepsilon r$ and
    \begin{equation}\label{eq_z_inclusion}
        \overline{B}(z, \varepsilon r) \subset \overline{B}(x_0,\rho).
    \end{equation}
    If $[y,z]$ meet $K$, then the proof is finished. Otherwise $z$ stays in the same connected component of $B(x_0,r_0) \setminus K$ as $y$ so
    \begin{equation*}
        z \in B(x,r) \setminus \left(K \cup \Omega_1 \cup \Omega_2\right).
    \end{equation*}
    The path $z + [-\varepsilon r \nu,\varepsilon r \nu]$ is contained in $B(x,r)$ and, depending whether $(z - x) \cdot \nu \geq 0$ or $(z - x) \cdot \nu \leq 0$, it connects $z$ to $\Omega_1$ or $\Omega_2$. Since $z$ belongs to a connected component of $B(x_0,r_0) \setminus K$ distinct from $\Omega_1$ and $\Omega_2$, we deduce that the path must cross $K$.
    Hence, there exists $x' \in K$ such that $\abs{z - x'} \leq \varepsilon r$ and thanks to (\ref{eq_z_inclusion}), $x' \in \overline{B}(x_0,\rho)$. Then, we also have $\abs{x' - y} \leq 2 \varepsilon r$ as wanted.

    Now, we are ready for the proof of (\ref{eq_reifenberg_inclusion}). We consider $y \in \overline{B}(x_0,\rho) \setminus \left(K \cup \Omega_1 \cup \Omega_2\right)$ and we want to show that there exists $x \in K \cap \overline{B}(x_0,\rho)$ such that $\abs{x - y} < r_x$.
    We start by applying the intermediate result in $B(x,r) = B(x_0,r_0)$.
    There exists a point $x_1 \in K \cap \overline{B}(x_0,\rho)$ such that
    \begin{equation*}
        \abs{y - x_1} \leq 2 \varepsilon r_0.
    \end{equation*}
    If $2 \varepsilon r_0 < r_{x_1}$, then our claim is proved.
    Otherwise, the radius $r_1 := (4/3) 2 \varepsilon r_0$ belongs to $]r_{x_1},r_0/4]$ and we can apply the intermediate result in the ball $B(x_1,r_1)$; there exists $x_2 \in K \cap \overline{B}(x_0,\rho)$ such that
    \begin{equation*}
        \abs{y - x_2} \leq (4/3) (2 \varepsilon)^2 r_0.
    \end{equation*}
    If $(4/3) (2 \varepsilon)^2 r < r_{x_2}$, then our claim is proved.
    Otherwise, we iterate as before with the radius $r_2 := (4/3)^2 (2 \varepsilon)^2 r_0$ which belongs to $]r_{x_2},r_0/4]$.
    If the process never stops, there exists a sequence $(x_k)_{k \geq 1}$ in $K$ such that
    \begin{equation*}
        \abs{y - x_k} < (4/3)^{k-1} (2 \varepsilon)^k r_0
    \end{equation*}
    so $y \in K$. Contradiction.
\end{proof}

\subsection{Proof of the Extension Lemma}

\begin{proof}
    Throughout the proof, the letter $C$ is a constant $\geq 1$ that depends only on $N$ and whose value might change from one line to another.
    {\color{black}
        We recall that for $x \in \overline{B}(x_0,\rho)$, we set $r_x = \delta(x)/U$.
    }
    We consider a maximal family $(W_i)_{i \in I}$ in
    \begin{equation*}
        \set{B(x,r_x) | x \in K \cap \overline{B}(x_0,\rho),\ \delta(x) > 0}
    \end{equation*}
    such that whenever $i \ne j$,
    \begin{equation*}
        \tfrac{1}{10} W_i \cap \tfrac{1}{10} W_j \ne \emptyset.
    \end{equation*}
    We let $x_i \in K \cap \overline{B}(x_0,\rho)$ denotes the center of $W_i$ and $r_i > 0$ its radius, so that $W_i = B(x_i,r_i)$ with $r_i = \delta(x_i)/U$.

    \begin{lemma}\label{lem_W}
        Here we recall that $K$ satisfies hypothesis-$H(x_0,r_0,\varepsilon_0)$, we recall the open sets $\Omega_1$, $\Omega_2$ introduced in Definiton \ref{def_hypH} and we recall that $\U = 10^5$ and $8 \varepsilon_0 \leq \tau \leq 10^{-8}$.
        \begin{enumerate}[label=\roman*)]
            \item\label{lem_W_delta1}
                For all $x, y \in K \cap \overline{B}(x_0,\rho)$ such that $y \in 50 B_x$, we have $r_x/2 \leq r_y \leq 2 r_x$.

            \item\label{lem_W_delta2}
                For all $x, y \in K \cap \overline{B}(x_0,\rho)$ such that $50 B_x \cap 50 B_y \ne \emptyset$, we have $r_x/2 \leq r_y \leq 2 r_x$. Moreover, if $10 B_x \cap 10 B_y \ne \emptyset$, then $\abs{x - y} \leq 30 r_x$ and in particular, $10 B_y \subset 50 B_x$.

            \item\label{lem_W_orientation}
                For all $x \in K \cap \overline{B}(x_0,\rho)$, for all $r \in ]r_x,r_0/4]$, we have $\beta(x,r) \leq \tau U$.
                In particular, there exists a unit vector $\nu_x$ such that
                \begin{equation*}
                    K \cap B(x,r) \subset \set{y \in B(x,r) | \abs{(y - x) \cdot \nu_x} \leq \tau \U r}
                \end{equation*}
                and
                \begin{align*}
                &\set{y \in B(x,r) | (y  - x) \cdot \nu_x > \tau \U r} \subset \Omega_1\\
                &\set{y \in B(x,r) | (y  - x) \cdot \nu_x < -\tau \U r} \subset \Omega_2.
                \end{align*}
                We have also
                \begin{equation*}
                    \overline{B}(x_0,\rho) \setminus \left(K \cup \Omega_1 \cup \Omega_2\right) \subset W.
                \end{equation*}

            \item\label{lem_W_covering1}
                We have
                \begin{align*}
                    \set{y \in K \cap \overline{B}(x_0,\rho) | \delta(y) > 0}  &= \set{y \in K \cap \overline{B}(x_0,\rho) | y \in \bigcup_i W_i}\\
                                                                               &= \set{y \in K \cap \overline{B}(x_0,\rho) | \exists x \in K \cap \overline{B}(x_0,\rho),\ y \in 50 B_x}
                \end{align*}
                and for all $t > 0$,
                \begin{equation*}
                    \bigcup_{y \in K \cap \overline{B}(x_0,\rho)} \overline{B}(y, t r_x) \subset \bigcup_i (2t + 1) W_i.
                \end{equation*}

            \item\label{lem_W_covering2}
                Let $k$ be an index such that $50 W_k \subset B(x_0,\rho)$.
                Then for all $x \in 10 W_k$ such that $\mathrm{dist}(x,K) \leq 2 r_k$, there exists an index $i$ such that $x \in 9 W_i$.

            \item\label{lem_W_locally_finite}
                There exists a universal constant $C \geq 1$ such that for all $x \in K \cap \overline{B}(x_0,\rho)$ with $\delta(x) > 0$, there exists at most $C$ balls $50 W_i$ which meet $50 B_x$.

            \item\label{lem_W_locally_finite2}
                The family $(50 W_i)_i$ is locally finite in
                \begin{equation*}
                    B(x_0,r_0) \setminus \set{x \in K \cap \overline{B}(x_0,\rho) | \delta(x) = 0}.
                \end{equation*}

            \item\label{lem_W_pointwise_finite}
                There exists a universal constant $C \geq 1$ such that for all $x \in \overline{B}(x_0,\rho)$, there at most $C$ balls $50 W_i$ which contains $x$.
        \end{enumerate}
    \end{lemma}

    \begin{proof}\strut

        \emph{Item \ref{lem_W_delta1} and \ref{lem_W_delta2}.} 
        They are immediate consequences of Remark \ref{rmk_delta}.

        \medskip

        \emph{Item \ref{lem_W_orientation}.}
        We check that for all $x \in K \cap \overline{B}(x_0,\rho)$ and for all $r \in ]r_x,r_0/4]$, we have $\beta(x,r) \leq \tau \U$.
        If $r > \delta(x)$, then it holds by definition of the geometric function.
        Otherwise $r_x < r \leq \delta(x)$ and since $r_x = \delta(x)/U$, we get
        \begin{equation*}
            \beta(x, r) \leq \frac{\delta(x)}{r_x} \beta(x,\delta(x)) \leq \tau \U.
        \end{equation*}
        The properties stated in this item follows from an application of Lemma \ref{lem_orientation} with $\varepsilon := \tau U \leq 10^{-3}$.

        \medskip

        \emph{Item \ref{lem_W_covering1}.}
        For $y \in K \cap \overline{B}(x_0,\rho)$ such that $\delta(y) > 0$, there exists an index $i$ such that the ball $\tfrac{1}{10} B(y, r_y)$ meet $\tfrac{1}{10} W_i$.
        As in Remark \ref{rmk_delta}, $\abs{y - x_i} \leq 3 r_i / 10 < r_i$ so $y$ belongs to $W_i$. 
        Reciprocally, for $y \in K \cap \overline{B}(x_0,\rho)$ such that there exists $x \in K \cap \overline{B}(x_0,\rho)$ with $y \in 50 B_x$, we have $\delta(x) > 0$ (because $50 B_x$ is non empty) so $\delta(y) > 0$ by item \ref{lem_W_delta1}.
        Finally, for all $y \in K \cap \overline{B}(x_0,\rho)$, we have either $\delta(y) = 0$ and thus $B(y,t r_y) = \emptyset$ or there exists $i$ such that $\abs{y  - x_i} < r_i$ and $r_y \leq 2 r_i$ so $\overline{B}(y, t r_y) \subset (2t + 1) W_i$.

        \medskip

        \emph{Item \ref{lem_W_covering2}.}
        We fix $k$ such that $50 W_k \subset B(x_0,\rho)$.
        For $x \in 10 W_k$ such that $\mathrm{dist}(x,K) \leq 2 r_k$, there exists $y \in K$ such that $\abs{x - y} \leq 2 r_k$ and since $y \in 50 W_k \subset B(x_0,\rho)$, we have $r_k \leq 2 r_y$.
        We apply \ref{lem_W_covering1} to conclude that
        \begin{equation*}
            x \in \overline{B}(y, 2 r_k) \subset \overline{B}(y, 4 r_y) \subset \bigcup_i 9 W_i.
        \end{equation*}

        \medskip

        \emph{Item \ref{lem_W_locally_finite}.}
        We fix a point $x \in K \cap \overline{B}(x_0,\rho)$ such that $\delta(x) > 0$.
        Whenever a ball $50 W_i$ meet $50 B_x$, we have $r_i/2 \leq r_x \leq 2 r_i$ and $\abs{x - x_i} \leq 50 r_i + 50 r_x \leq 150 r_x$ so $W_i \subset 200 B_x$.
        Thus, the set
        \begin{equation}\label{eq_set}
            \Set{\tfrac{1}{10} W_i | 50 W_i \cap 50 B_x \ne \emptyset}
        \end{equation}
        is composed of disjoint balls whose radii are bounded from below by $r_x / 20$ and which are contained in $200 B_x$.
        We deduce that the cardinal of (\ref{eq_set}) is bounded by a universal constant.

        \emph{Item \ref{lem_W_locally_finite2}.}
        We want to prove that for all
        \begin{equation*}
            x \in B(x_0,r_0) \setminus \set{x \in K \cap \overline{B}(x_0,\rho) | \delta(x) = 0},
        \end{equation*}
        there exists a neighborhood of $x$ which meet only a finite number of balls $(50 W_i)_i$.
        According to item \ref{lem_W_covering1}, we have
        \begin{multline*}
            B(x_0,r_0) \setminus \set{x \in K \cap \overline{B}(x_0,\rho) | \delta(x) = 0} \\= \bigcup \Set{50 B_y | y \in K \cap B(x_0,\rho)} \cup B(x_0,r_0) \setminus \left[K \cap \overline{B}(x_0,\rho)\right].
        \end{multline*}
        If there exists $y \in K \cap \overline{B}(x_0,\rho)$ such that $x \in 50 B_y$, we must have in particular $\delta(y) > 0$ and then the previous item shows that $50 B_y$ has the required property.
        We now focus on the points $x \in B(x_0,r_0) \setminus \left[K \cap \overline{B}(x_0,\rho)\right]$.
        We proceed by contradiction and assume that there is a sequence of distincts elements $(W_{i_k})_{k \in \mathbf{N}}$ extracted from $(W_i)$ such that $\mathrm{dist}(x,50 W_{i_k}) \to 0$ when $k \to +\infty$.
        The balls $(1/10) W_{i_k}$ are disjoint and contained in $B(x_0,r_0)$ so their radii must go to $0$ as $k$ goes to $+\infty$.
        We deduce that $\mathrm{dist}(x,x_{i_k}) \to 0$, where $x_{i_k}$ denotes the center of $W_{i_k}$.
        But since the points $x_{i_k}$ belong to $K \cap B(x_0,\rho)$, we must have $x \in K \cap \overline{B}(x_0,\rho)$.
        Contradiction.

        \emph{Item \ref{lem_W_pointwise_finite}.}
        In view of the two previous items, it suffices to focus on the points $x \in K \cap \overline{B}(x_0,\rho)$ such that $\delta(x) = 0$.
        But there are no ball $50 W_i$ containing $x$ (otherwise $r_x \geq r_i / 2 > 0$).
    \end{proof}

    Let us draw two consequences of Lemma \ref{lem_W}.
    According to item \ref{lem_W_covering1}, we have
    \begin{equation}\label{eq_W_Wi}
        \W \subset \bigcup_i 3 W_i,
    \end{equation}
    where we recall that $W = \bigcup \set{B_x | x \in K \cap \overline{B}(x_0,\rho)}$.
    Next, we observe that according to item \ref{lem_W_locally_finite2}, the family $(50 W_i)$ is locally finite in each open set $V_h = \Omega_h \cup W$.

    For every ball $W_i$, we consider a function $\varphi_i \in C_c^\infty(\R^N)$ with compact support on $10 W_i$, equal to $1$ on $9 W_i$ and such that $0 \leq \varphi_i \leq 1$.
    We also choose $\varphi_i$ so that its Lipschitz constant is $\leq 2 r_i^{-1}$. Frow now on, we fix $h = 1,2$.
    Letting $\spt(\varphi_i)$ denote the compact support of $\varphi_i$ in $10 W_i$, we define
    \begin{equation}\label{eq_Sh}
        S_h := V_h \cap \bigcup_i \spt(\varphi_i).
    \end{equation}
    Since the family $(10 W_i)_i$ is locally finite in $V_h$, the set $S_h$ is a relatively closed subset of $V_h$. It is also clear using (\ref{eq_W_Wi}) and $\spt(\varphi_i) \subset 10 W_i$ that
    \begin{equation*}
        \W \subset S_h \subset \WW.
    \end{equation*}
    We are going to complete the family $(\varphi_i)_i$ with a function $\varphi_0$ to obtain a partition of unity in $V_h$.
    \begin{lemma}\label{lem_varphi}
        There is a function $\varphi_0 \in C^\infty(V_h)$ such that $0 \leq \varphi_0 \leq 1$,
        \begin{align*}
            \varphi_0  &= 1 \text{ in } V_h \setminus S_h,\\
            \varphi_0  &= 0 \text{ in } \bigcup_i 9 W_i
        \end{align*}
        and
        \begin{equation}\label{eq:phi2}
            V_h \cap \set{\varphi_0 \ne 0} \subset \Omega_h \setminus \bigcup_i 9 W_i.
        \end{equation}
        We also have
        \begin{equation*}
            \varphi_0 + \sum_{i \geq 1} \varphi_i \geq 1 \text{ in } V_h\label{eq:phi3}
        \end{equation*}
        and there is a universal constant $C \geq 1$ such that for all $k$, for all $x \in 10 W_k$,
        \begin{equation*}
            \abs{\nabla \varphi_0(x)} + \sum_{i \in I} \abs{\nabla \varphi_i(x)} \leq C r_k^{-1}.
        \end{equation*}
    \end{lemma}

    \begin{proof}
        The letter $C$ denotes a constant $\geq 1$ that depends only on $N$ but whose value might change from one line to another.
        We define 
        \begin{equation*}
            \varphi_0(x) = \prod_{i \in I} (1 - \varphi_i).
        \end{equation*}
        Thanks to item \ref{lem_W_locally_finite2} of Lemma \ref{lem_W}, we know that every point $x \in V_h$ admits a neighborhood which meets a finite number of balls $10 W_i$. 
        Therefore, $\varphi_0$ is well defined and smooth. We can say more about the Lipschitz constant of $\varphi_0$ in a fixed set $10 W_k$.
        According to \ref{lem_W_locally_finite}, there exists a universal constant $C \geq 1$ such that at most $C$ balls $10 W_i$ meet $10 W_k$. Moreover, we have in this case $r_k/2 \leq r_i \leq 2 r_k$.
        So in the set $10 W_k$, $\varphi_0$ is a product of $C$ functions which are $C r_k^{-1}$-Lipschitz and with range in $[0,1]$ so it itself $C r_k^{-1}$-Lipschitz. By the same argument, the sum $\sum_{i \in I} \varphi^j$ is $C r_k^{-1}$-Lipschitz in $10 W_k$. 
        It is straightforward that $\varphi_0 = 1$ in $V_h \setminus S_h$. It is also clear that $\varphi_0 = 0$ in $\bigcup_i 9 W_i$ whence
        \begin{equation*}
            V_h \cap \set{\varphi_0 \ne 0} \subset V_h \setminus \bigcup_i 9 W_i.
        \end{equation*}
        But $V_h = \left(\Omega_h \cup \W\right)$ and we have seen in (\ref{eq_W_Wi}) that $W \subset \bigcup_i 9 W_i$ so we have in fact
        \begin{equation*}
            V_h \cap \set{\varphi_0 \ne 0} \subset \Omega_h \setminus \bigcup_i 9 W_i.
        \end{equation*}
        The property (\ref{eq:phi3}) follows from the observation that for all sequence $(t_k)_{k \in \N}$ in $[0,1]$, we have
        \begin{equation*}
            \prod_k (1 - t_k) + \sum_k t_k \geq 1.
        \end{equation*}
        It can be proved by induction because for all $s,t \in [0,1]$, $s (1 - t) + t \geq s$. 
    \end{proof}

    We set for $i \in I$,
    \begin{equation*}
        \theta_i = \frac{\varphi_i}{\varphi_0 + \sum_{i \in I} \varphi_i}
    \end{equation*}
    and
    \begin{equation*}
        \theta_0 = \frac{\varphi_0}{\varphi_0 + \sum_{i \in I} \varphi_i}.
    \end{equation*}
    Now we have a smooth partition of unity on $V_h$. The information that we have gathered imply easily that for all $k \in I$, the function $\theta_i$ is $C r_k^{-1}$-Lipschitz in $V_h \cap 10 W_k$. 

    We are ready to build the functions $v_h$. For each $i$, there exists an hyperplane $P_i$ passing through $x_i$ such that
    \begin{equation*}
        K \cap 10 W_i \subset \set{x \in 10 W_i | \mathrm{dist}(x,P_i) \leq 10 \tau \U r_i}
    \end{equation*}
    and a unit normal vector $\nu_i$ to $P_i$ such that
    \begin{align*}
        A_1(x_i,10 r_i) &:=\set{y \in 10 W_i | (y - x_i) \cdot \nu_i > \tau \U r_i} \subset \Omega_1\\
        A_2(x_i,10 r_i) &:=\set{y \in 10 W_i | (y - x_i) \cdot \nu_i < -\tau \U r_i} \subset \Omega_2.
    \end{align*}
    Now, we define
    \begin{align*}
        a_1^i &= x_i + 8 r_i \nu_i\\
        a_2^i &= x_i - 8 r_i \nu_i
    \end{align*}
    and $D_h^i = B(a_h^i,r_i)$ for $h = 1,2$. We also denote by $R_h^i$ the averaged rigid displacement of $u$ on $D_h^i$;
    \begin{equation*}
        R_h^i : x \mapsto \fint_{D_h^i} u(y) \dd{y} + \left(\fint_{D_h^i} \frac{\nabla u(y) - \nabla u(y)^T}{2} \dd{y}\right) \left(x - \fint_{D_h^i} y \dd{y}\right).
    \end{equation*}
    Finally, we define on $V_h$, the function
    \begin{equation*}
        v_h(x) := \theta_0(x) u(x) + \sum_{i \in I} \theta_i(x) R_h^i.
    \end{equation*}
    It is clear that $v_h \in LD_{\mathrm{loc}}(V_h)$ and that $v_h = u$ in $V_h \setminus S_h$, where $S_h$ is the relatively closed subset of $V_h$ defined in (\ref{eq_Sh}) by
    \begin{equation*}
        S_h = V_h \cap \bigcup_i \spt(\varphi_i).
    \end{equation*}
    We are going to show that for all $k$ such that $50 W_k \subset B(x_0,\rho)$, we have
    \begin{equation}\label{eq_vh}
        \int_{V_h \cap 10 W_k} \abs{e(v_h)}^2 \dd{x} \leq C \int_{\Omega_h \cap 50 W_k} \abs{e(u)}^2 \dd{x}.
    \end{equation}
    We require $50 W_k \subset B(x_0,\rho)$ so that the right-hand side integral in (\ref{eq_vh}) does not outreach $B(x_0,\rho)$ but also because we will use crucially the item \ref{lem_W_covering2} of Lemma \ref{lem_W}.
    Let us explain how to conclude from (\ref{eq_vh}).
    Thanks to item \ref{lem_W_pointwise_finite} of Lemma \ref{lem_W}, the sum of indicators functions $\sum_i \mathbf{1}_{50 W_i}$ is bounded by a constant $C$ so we can add up the local inequalities in (\ref{eq_vh}) to obtain 
    \begin{equation*}
        \int_{V_h \cap A} \abs{e(v_h)}^2 \dd{x} \leq C \int_{B(x_0,\rho) \cap \Omega_h} \abs{e(u)}^2 \dd{x},
    \end{equation*}
    where $A := \bigcup \set{10 W_i | 50 W_i \subset B(x_0,\rho)}$.
    As
    \begin{equation*}
        S_h = V_h \cap \bigcup_i \spt(\varphi_i) \subset V_h \cap \bigcup_i (10 W_i),
    \end{equation*}
    we have $S_h \setminus \ZZ \subset V_h \cap A$ and thus
    \begin{equation}\label{eq_SZ}
        \int_{S_h \setminus \ZZ} \abs{e(v_h)}^2 \dd{x} \leq C \int_{B(x_0,\rho) \cap \Omega_h} \abs{e(v_h)}^2 \dd{x}
    \end{equation}
    Finally, we use the fact that $v_h = u_h$ in $V_h \setminus S_h$ to conclude
    \begin{equation*}
        \int_{V_h \cap B(x_0,\rho) \setminus \ZZ} \abs{e(v_h)}^2 \dd{x} \leq C \int_{B(x_0,\rho) \cap \Omega_h} \abs{e(v_h)}^2 \dd{x}.
    \end{equation*}

    From now on, we fix an index $k$ such that $50 W_k \subset B(x_0,\rho)$ and we prove (\ref{eq_vh}).
    We define $I_k$ the set of index $i \in I$ such that $10 W_i \cap 10 W_k \ne \emptyset$. In particular, we recall that for $i \in I_k$, we have $r_k / 2 \leq r_i \leq 2 r_k$, $\abs{x_i - x_k} \leq 30 r_k$ and $10 W_i \subset 50 W_k$.
    We want to show first that for all $i \in I_k$,
    \begin{equation}\label{eq_rik}
        \int_{10 W_k} \abs*{R_h^i - R_h^k}^2 \dd{x} \leq C r_k^2 \int_{\Omega_h \cap 50 W_k} \abs{e(u)}^2 \dd{x}.
    \end{equation}
    We cannot apply the Korn-Poincaré inequality in the whole ball $10 W_k$ (because the structure of $K$ is a priori unknown) but we will apply it in a Lipschitz regular open subset set $D_h$ of $B(x_0,\rho) \cap \Omega_h$ of diameter $\leq 100 r_k$ and which contains all the $D_h^i$ for $i \in I$.
    We consider an hyperplane $P$ passing through $x_k$ such that
    \begin{equation*}
        K \cap 50 W_k \subset \set{y \in 50 W_k | \mathrm{dist}(y,P) \leq 50 \tau \U r_k}
    \end{equation*}
    and a unit normal vector $\nu$ to $P$ such that
    \begin{align*}
        A_1(x_k,50r_k) &:= \set{y \in 50 W_k | (y - x_k) \cdot \nu > 50 \tau \U r_k} \subset \Omega_1\\
        A_2(x_k,50r_k) &:= \set{y \in 50 W_k | (y - x_k) \cdot \nu < -50\tau \U r_k} \subset \Omega_2.
    \end{align*}
    It will be useful to observe, as in Remark \ref{rmk_beta_equivalence}, that for all $x \in P \cap 50 W_k$, we have $\mathrm{d}(x,K) \leq 100 \tau \U r_k \leq r_k/2$.
    Now, we introduce
    \begin{subequations}
        \begin{align}
            D_1 &:= \set{y \in 49 W_k | (y - x_k) \cdot \nu > (3/2) r_k}\label{eq_DhD1}\\
            D_2 &:= \set{y \in 49 W_k | (y - x_k) \cdot \nu < - (3/2) r_k}.\label{eq_DhD2}
        \end{align}
    \end{subequations}
    It is clear that $D_h \subset \Omega_h$ because $D_h \subset A_h(x_k,50 r_k)$.
    We are going to justify that for all $i \in I_k$, the ball $D_h^i = B(a_h^i,r_i)$ is contained in $D_h$.
    We fix $i \in I_k$ and we recall that $r_k/2 \leq r_i \leq 2 r_k$ and $\abs{x_i - x_k} \leq 30 r_k$.
    By construction, we have $D_h^i \subset 9 W_i$ so $D_h^i \subset 48 W_k$.
    Then, we show that $(a^i_1 - x_k) \cdot \nu > (7/2) r_k$ and $(a^i_2 - x_k) \cdot \nu < - (7/2) r_k$.
    We only detail the case $h = 1$.
    Since $x_i \in K \cap 10 W_i \subset K \cap 50 W_k$, we have $\abs{(x_i - x_k) \cdot \nu} \leq 50 \tau U r_k$. By Lemma \ref{lem_orientation}, we also know that $\nu_i \cdot \nu \geq 1 - 100 \tau \U$.
    Using the formula $a_1^i = x_i + 8 r_i \nu_i$, we have
    \begin{align*}
        (a_1^i - x_k) \cdot \nu   &= 8(\nu_i \cdot \nu)r_i +  (x_i - x_k) \cdot \nu\\
                                  &\geq 8 (1 - 100 \tau U) r_i - 50 \tau U r_k\\
                                  &\geq 4 (1 - 100 \tau U) r_k - 50 \tau U r_k\\
                                  &\geq (7/2) r_k,
    \end{align*}
    because $\tau U \leq 10^{-3}$. It follows that $D_h^i = B(a_h^i,r_i) \subset B(a_h^i,2r_k) \subset D_h$.
    Note also that the diameter of $D_h$ is bounded by $100 r_k$ and is comparable to radii of all the balls $D_h^i$ for $i \in I_k$.
    We estimate the $L^2$ norm of $R_h^i - R_h^k$ in $D_h^k$ using the Korn-Poincaré inequality,
    \begin{align*}
        \int_{D_h^k} \abs*{R_h^i - R_h^k}^2 \dd{x} &\leq \int_{D_h} \abs*{R_h^i - R_h^k}^2 \dd{x}\\
                                                   &\leq 2\int_{D_h} \abs*{R_h^i - u}^2 \dd{x} + 2\int_{D_h} \abs*{u - R_h^k}^2 \dd{x} \\
                                                   &\leq C r_k^2 \int_{D_h} \abs*{e(u)}^2 \dd{x}.
    \end{align*}
    As $R_h^i - R_h^k$ is an affine function, we have
    \begin{equation*}
        \int_{20 D_h^k} \abs*{R_h^i - R_h^k}^2 \dd{x} \leq C \int_{D_h^k} \abs*{R_h^i - R_h^k}^2 \dd{x}
    \end{equation*}
    and since $10 W_k \subset 20 D_h^k$, the estimate (\ref{eq_rik}) is now proved.



    We come back to the estimation of $\int_{V_h \cap 10 W_k} \abs{e(v_h)}^2 \dd{x}$. Using $e(R_h^i) = 0$, we compute
    \begin{equation*}
        e(v_h) = \nabla \theta_0 \odot u + \theta_0 e(u) + \sum_{i \geq 1} \nabla \theta_i \odot R_h^i,
    \end{equation*}
    where given two vectors $v,w \in \R^N$,
    \begin{equation*}
        v \odot w := \frac{v w^T + w v^T}{2} = \left(\frac{v_i w_j + v_j w_i}{2}\right)_{ij} \in \mathbb M^{N \times N}.
    \end{equation*}
    Since $(\theta_i)_i$ is a partition of unity, we know that $\theta_0 + \sum_{i \in I} \theta_i = 1$ hence we can substract $R_h^k$ and obtain
    \begin{equation*}
        e(v_h) = \nabla \theta_0 \odot (u - R_h^k) + \theta_0 e(u) + \sum_{i \in I} \nabla \theta_i \odot (R_h^i - R_h^k).
    \end{equation*}
    so
    \begin{equation*}
        \abs{e(v_h)} \leq \abs{\nabla \theta_0} \abs*{u - R_h^k} + \abs{\theta_0 e(u)} + \sum_{i \in I} \abs{\nabla \theta_i} \abs*{R_h^i - R_h^k}.
    \end{equation*}
    To deal with the first term, we show that
    \begin{equation}\label{eq_theta0_Dh}
        \set{x \in V_h \cap 10 W_k | \nabla \theta_0 \ne 0} \subset D_h.
    \end{equation}
    As we have seen in Lemma \ref{lem_varphi}, we have $\nabla \theta_0 = \theta_0 = 0$ in $\bigcup 9 W_i$, whence
    \begin{align*}
        \set{x \in V_h \cap 10 W_k | \nabla \theta_0 \ne 0} &\subset V_h \cap 10 W_k \setminus \bigcup_i 9 W_i\\
                                                            &= \Omega_h \cap 10 W_k \setminus \bigcup_i 9 W_i.
    \end{align*}
    Then we recall that $50 W_k \subset B(x_0,\rho)$, the item \ref{lem_W_covering2} of Lemma \ref{lem_W} and the fact that for $x \in P \cap 50 W_k$, we have $\mathrm{dist}(x,K) \leq r_k/2$ to see that
    \begin{align*}
        \Omega_h \cap 10 W_k \setminus \bigcup_i 9 W_i                  &\subset \set{x \in \Omega_h \cap 10 W_k | \mathrm{dist}(x,K) > 2 r_k}\\
                                                                        &\subset \set{x \in \Omega_h \cap 10 W_k | \mathrm{dist}(x,P) > (3/2) r_k},
    \end{align*}
    which is contained in $D_h$.
    Then we use the Korn-Poincaré inequality in $D_h$ to estimate
    \begin{align*}
        \int_{V_h \cap 10 W_k} \abs{\nabla \theta_0}^2 \abs*{u - R_h^k}^2 \dd{x} &\leq C r_k^{-2} \int_{D_h} \abs*{u - R_h^k}^2 \dd{x}\\
                                                                                 &\leq C \int_{D_h} \abs{e(u)}^2 \dd{x}.
    \end{align*}
    For the second term, there is nothing to do thanks to (\ref{eq:phi2}) and the fact that $\abs{\theta_0} \leq 1$ in $V_h$.
    For the last term, remember that there are at most $C$ indices in $I_k$ and that for all $i \in I_k$, we have $\abs{\nabla \theta_i} \leq C r_k^{-1}$ in $V_h \cap 10 W_k$. We apply (\ref{eq_rik}) to deduce
    \begin{align*}
        \sum_{i \in I} \int_{V_h \cap 10 W_k} \abs{\nabla \theta_i}^2 \abs*{R_h^i - R_h^k}^2 \dd{x}   &\leq C r_k^{-2} \sum_{i \in I_k} \abs*{R_h^i - R_h^k}^2 \dd{x}\\
                                                                                                      &\leq C \int_{D_h} \abs{e(u)}^2 \dd{x}.
    \end{align*}
\end{proof}

\begin{remark}\label{rmk_vh}
    A careful look at the proof reveals that we have actually proved that for all $k$ such that $50 W_k \subset B(x_0,\rho)$,
    \begin{equation}\label{eq_evh_Dh}
        \int_{V_h \cap 10 W_k} \abs{e(v_h)}^2 \dd{x} \leq C \int_{D_h} \abs{e(u)}^2 \dd{x},
    \end{equation}
    where $D_h$ is defined in (\ref{eq_DhD1})-(\ref{eq_DhD2}) by
    \begin{align*}
        D_1 &:= \set{y \in 49 W_k | (y - x_k) \cdot \nu > (3/2) r_k} \subset \Omega_1\\
        D_2 &:= \set{y \in 49 W_k | (y - x_k) \cdot \nu < - (3/2) r_k} \subset \Omega_2.
    \end{align*}
    The fact that
    \begin{equation*}
        K \cap 50 W_k \subset \set{y \in 50 W_k | \mathrm{dist}(y,P) \leq r_k/2},
    \end{equation*}
    implies that for $y \in D_h$, we have $\mathrm{dist}(y,K) \geq r_k$.
    We conclude as we did near (\ref{eq_SZ}) that
    \begin{equation*}
        \int_{S_h \setminus \ZZ} \abs{e(v_h)}^2 \dd{x} \leq C \int_{D} \abs{e(u)}^2 \dd{x},
    \end{equation*}
    where
    \begin{equation*}
        D := B(x_0,\rho) \cap \bigcup \set{y \in 50 B(x,r_x) | x \in K \cap \overline{B}(x_0,\rho),\ \mathrm{dist}(y,K) \geq r_x},
    \end{equation*}
    with $B_x = B(x,r_x)$ and $r_x = \delta(x) / U$.
    This estimation is a bit finer than the Lemma's statement and will be useful for Proposition \ref{prop_energy_decay}.
\end{remark}

{\color{black}
    \begin{remark}
        We also control the $L^2$ norm of the extension $v_h$ via the inequality
        \begin{equation}\label{eq_vh_Dh}
            \int_{S_h \setminus \ZZ} \abs{v_h^2} \dd{x} \leq C \int_{D} \abs{u}^2 \dd{x} + C r_0^2 \int_D \abs{e(u)}^2 \dd{x},
        \end{equation}
        where as before
        \begin{equation*}
            D := B(x_0,\rho) \cap \bigcup \set{y \in 50 B(x,r_x) | x \in K \cap \overline{B}(x_0,\rho),\ \mathrm{dist}(y,K) \geq r_x},
        \end{equation*}
        with $B_x = B(x,r_x)$ and $r_x = \delta(x) / U$.
        The control of the $L^2$ norm of $v_h$ will not be needed in this paper but can be used to adapt our technique to a Griffith almost-minimizer with a fidelity term.
        Inequality (\ref{eq_vh_Dh}) follows from the fact that for all $k$ with $50 W_k \subset B(x_0,\rho)$, we have
        \begin{equation*}
            \int_{V_h \cap 10 W_k} \abs{v_h}^2 \dd{x} \leq C \int_{D_h} \abs{u}^2 \dd{x} + C r_k^2 \int_{D_h} \abs{e(u)}^2 \dd{x},
        \end{equation*}
        whose proof is similar to (\ref{eq_evh_Dh}) stated just above for $e(v_h)$ but simpler.
        Indeed, we recall that for $x \in V_h \cap 10 W_k$,
        \begin{equation*}
            v_h(x) = \theta_0(x) u(x) + \sum_{i \in I_k} \theta_i(x) R_h^i(x)
        \end{equation*}
        and that there are at most $C$ indices in $I_k$ so
        \begin{equation*}
            \int_{V_h \cap 10 W_k} \abs{v_h}^2 \dd{x} \leq C \int_{V_h \cap 10 W_k} \abs{\theta_0 u}^2 \dd{x} + C \sum_{i \in I_k} \int_{10 W_k} \abs{R_h^i}^2 \dd{x}.
        \end{equation*}
        Just like (\ref{eq_theta0_Dh}), we have
        \begin{equation*}
            \set{x \in V_h \cap 10 W_k | \theta_0 \ne 0} \subset D_h.
        \end{equation*}
        so we can bound the first term by
        \begin{equation*}
            \int_{V_h \cap 10 W_k} \abs{\theta_0 u}^2 \dd{x} \leq \int_{D_h} \abs{u}^2 \dd{x}.
        \end{equation*}
        For $i \in I_k$, we use the fact that $10 W_k \subset 20 D_h^k$ and that $R_h^i$ is an affine map, to see that
        \begin{equation*}
            \int_{10 W_k} \abs{R_h^i}^2 \dd{x} \leq C \int_{20 D_h^k} \abs{R_h^i}^2 \dd{x} \leq C \int_{D_h^k} \abs{R_h^i}^2 \dd{x}
        \end{equation*}
        and then we can bound
        \begin{align*}
            \int_{D_h^k} \abs{R_h^i}^2 \dd{x} &\leq 2 \int_{D_h^k} \abs{R_h^i - u}^2 \dd{x} + \int_{D_h^k} \abs{u}^2 \dd{x}\\
                                              &\leq C r_k^2 \int_{D_h} \abs{e(u)}^2 \dd{x} + \int_{D_h} \abs{u}^2 \dd{x}.
        \end{align*}
    \end{remark}
}


\section{Control of the flatness by the minimality defect}


{\color{black}
    The goal of this section is to explain how a small minimality defect implies the decay of the flatness.
    We start by introducing the notion of deformation competitor.
}

\begin{definition}
    Let $E$ be a relatively closed subset of $\Omega$.
    A \emph{deformation} of $E$ in an open ball $B \subset \Omega$ is a Lipschitz function $f : E \to \Omega$ such that $f(E \cap B) \subset B$ and $\set{f \ne \mathrm{id}} \subset \subset B$.
    The image of $E$ by a deformation is called a \emph{deformation competitor} of $E$ in $B$.
\end{definition}

{\color{black}
    \begin{remark}\label{rmk_borsuk}
        Topological competitors preserve separation properties.
        Let us give more details.
        Let $E$ be a relatively closed subset of $B(0,1)$ and let $p,q \in B(0,1) \setminus E$ be two points separated by $E$ in $B(0,1)$.
        If $f$ is a deformation of $E$ in $B(0,1)$ such that $p,q \notin [x,f(x)]$ for all $x \in E$, then $p$ and $q$ are still separated by $f(E)$.
        Indeed, the theory of Borsuk maps (\cite[Chap. XVII, 4.3]{dugundji}) shows that if $A$ is a compact set of $\R^N$ which separates two points $p, q \in \R^N \setminus A$ and if $\phi : A \times [0,1] \to \R^N$ is a continuous map such that
        \begin{equation*}
            \phi(\cdot,0) = \mathrm{id} \quad \text{and} \quad p, q \notin \phi(A \times [0,1]),
        \end{equation*}
        then $p$ and $q$ are still separated by $\phi(A,1)$.
        In our case, we can extend $f$ continuously on the compact set $A := E \cup \partial B(0,1)$ by setting $f = \mathrm{id}$ on $\partial B(0,1)$ and we can apply the previous statement with $\phi(x,t) = (1 - t) x + t f(x)$.
    \end{remark}
}

We now introduce the notion of almost-minimal set and then we show that the flatness of minimal sets decay as a power.
We recall that a gauge is a non-decreasing function $h : (0,+\infty) \to [0,+\infty]$ such that $\lim_{t \to 0^+} h(t) = 0$.
Our definition of almost-minimal sets is slightly different from \cite[Definition 1.10, p.5/841]{d5} and \cite[Definition 1.6]{Lab} but it does not matter.
\begin{definition}
    Let $E$ be a relatively closed subset of $\Omega$ with $\HH^{N-1}$-locally finite measure.
    We say that $E$ is an almost-minimal set with gauge $h$ in $\Omega$ if for all $x \in E$, for all $r > 0$ such that $B(x,r) \subset \Omega$ and for all deformation $f$ of $E$ in $B(x,r)$, we have
    \begin{equation*}
        \HH^{N-1}(E \cap B(x,r)) \leq \HH^{N-1}(f(E \cap B(x,r))) + h(r) r^{N-1}.
    \end{equation*}
    Moreover, we say that $E$ is coral if for all $x \in E$ and all $r > 0$, $\HH^{N-1}(E \cap B(x,r)) > 0$.
\end{definition}

As usual, if E is almost-minimal with gauge $h$ in a ball $B(x_0,r_0)$, then the set $r_0^{-1}(E - x_0)$ is almost-minimal in $B(0,1)$ with gauge $\tilde{h}(t) := h(r_0 t)$.
The next lemma should be standard but since we have not found the statement in the litterature, we justify it by using the ideas of Allard's regularity theorem. The reader can skip the proof.

\newcommand{\EG}{\varepsilon_{*}}
\newcommand{\CG}{C_{*}}
\begin{theorem}\label{thm_flatness}
    For all $\gamma \in (0,1)$, there exists $\EG \in (0,1)$ and $\CG \geq 1$ (depending on $N$, $\gamma$) such that the following property holds.
    Let $r_0 > 0$, let $E$ be a coral minimal set in $B(0,r_0)$ such that $0 \in E$.
    If $\beta(0,r_0) \leq \EG$, then for all $0 < r \leq r_0$,
    \begin{equation*}
        \beta(0,r) \leq \CG \left(\frac{r}{r_0}\right)^\gamma \beta(0, r_0).
    \end{equation*}
\end{theorem}
\begin{proof}
    We follow the reference \cite[Theorem 23.1]{simon}.
    To reduce cumbersome notations, we replace $B(0,r_0)$ by $B(0,2r_0)$ in the assumptions, i.e., we assume that $E$ is minimal in $B(0,2r_0)$ and that $\beta(0,2r_0) \leq \EG$.

    Our  minimal set $E$ induces a stationary $n$-rectifiable varifold $V$ with multiplicity $1$ in $B(0,2r_0)$ (see \cite[\S 15--16]{simon}).
    In Simon's notation, the letter $H$ stands for the generalized mean curvature of $V$ which is $0$, the letter $\mu$ stands the weight measure of $V$ which is $\HH^{n-1} \mres E$, the letter $\theta$ stands for the multiplicy of $V$ which is $1$ and the symbol $\spt(V)$ stands for $\spt(\mu)$ which is the set $E$ (because $E$ is coral).


    We let $(e_1,\ldots,e_N)$ be the canonical basis of $\R^N$.
    We identify $\R^{N-1}$ with the hyperplane of $\R^N$ generated by the $e_1,\ldots,e_{N-1}$.
    We decompose each vector $x \in \R^N$ as $x = x' + x_N e_N$, where $x' \in \R^{N-1}$ and $x_N \in \R$.
    Without loss of generality, we assume that the hyperplane $P = \R^{N-1}$ achieves the minimum in the definition of $\beta(0,2r_0)$.

    For $x \in E \cap B(0,r_0)$ and $0 < r \leq r_0$, we introduce
    \begin{equation*}
        d(x,r) := \frac{\HH^{N-1}(E \cap B(x,r))}{\omega_{N-1} r^{N-1}}, 
    \end{equation*}
    where $\omega_{N-1}$ is the measure of a $(N-1)$-dimensional unit disk.
    We claim that there exists a universal constant $C \geq 1$ such that 
    \begin{equation}\label{eq_density_flatness}
        d(0,r_0) \leq 1 + C \beta(0,2r_0).
    \end{equation}
    If $\beta(0,2r_0) \geq 1/4$, this is trivial by Ahlfors-regularity of minimal sets (\cite[Lemma 2.15]{d3}).
    We pass to the case $\beta(0,2r_0) \leq 1/4$.
    We start by introducing the tubular neighborhood
    \begin{equation*}
        A := \set{x \in B(0,2r_0) | \abs{x_N} \leq \beta(0,2r_0) r},
    \end{equation*}
    which, by definition of $\beta(0,2r_0)$, contains $E \cap B(0,2r_0)$.
    We also consider the set
    \begin{equation*}
        F = (\partial B(0,r_0) \cap A) \cup (P \cap B(0,r_0)).
    \end{equation*}
    One can build a Lipschitz function $f: \R^N \to \R^N$ such that $f(B(0,2r_0)) \subset B(0,2 r_0)$, $\set{f \ne \mathrm{id}} \subset \subset B(0,2r_0)$,
    \begin{equation*}
        f(B(0,r_0)) \subset F
    \end{equation*}
    and
    \begin{equation*}
        f = \mathrm{id} \ \text{in} \ \left(A \setminus B(0,r_0)\right).
    \end{equation*}
    It is a deformation of $E$ in $B(0,2r_0)$ so
    \begin{equation*}
        \HH^{N-1}(E \cap B(0,2r_0)) \leq \HH^{N-1}(f(E \cap B(0,2r_0)))
    \end{equation*}
    but since $f = \mathrm{id}$ in $E \setminus B(0,r_0)$, we deduce
    \begin{equation*}
        \HH^{N-1}(E \cap B(0,r_0)) \leq \HH^{N-1}(f(E \cap B(0,r_0)).
    \end{equation*}
    To conclude, we observe that $f(E \cap B(0,r_0)) \subset F$ and
    \begin{equation*}
        \HH^{N-1}(F) \leq \omega_{N-1} r_0^{N-1} + C \beta(0,2r_0) r_0^{N-1}.
    \end{equation*}
    Our claim is proved.

    For $x \in E \cap B(0,r_0)$, $0 < r \leq r_0$ and for any (linear) hyperplane $T$, we introduce the tilt-excess
    \begin{equation*}
        E(x,r,T) = r^{1-N} \int_{E \cap B(x,r)} \norm{p_{T_x} - p_T}^2 \dd{\HH^{N-1}},
    \end{equation*}
    where $T_x$ is the (linear) approximate tangent plane of $E$ at $x$, $p_{T_x}$ and $p_T$ are the orthogonal projection onto $T_x$ and $T$ respectively and $\norm{\cdot}$ is the operator norm.
    If we let $\nu_x$ denote a unit normal vector to $T_x$ and $\nu$ a unit normal vector to $T$, one can compute that
    \begin{equation*}
        \norm{p_{T_x} - p_T} = \sqrt{1 - (\nu_x \cdot \nu)^2} = \abs{\sin(\alpha)},
    \end{equation*}
    where $\alpha$ is the angle between $T_x$ and $T$.

    According to Caccioppoli inequality for minimal sets (\cite[Lemma 22.2]{simon}), there exists a universal constant $C \geq 1$ such that for all (linear) hyperplane $T$ in $\R^N$,
    \begin{equation*}
        E(0,r_0,T) \leq C r_0^{1-N} \int_{E \cap B(0,2r_0)} \left(\frac{\mathrm{dist}(y - x,T)}{r}\right)^2 \dd{\HH^{N-1}}.
    \end{equation*}
    We have in particular for all $T$,
    \begin{equation*}
        E(0,r_0,T) \leq C \left(r^{-1} \sup_{y \in E \cap B(0,2 r_0)} \mathrm{dist}(y,T)\right)^2
    \end{equation*}
    and thus for $T = \R^{N-1}$,
    \begin{equation}\label{eq_excess_flatness}
        E(0,r_0,\R^{N-1}) \leq C \beta(0,2r_0)^2.
    \end{equation}

    From now on, we are going to review the proof of \cite[Theorem 23.1]{simon} to prove the decay of the flatness.
    Here, $c \geq 1$ is a generic constant that depends only on $N$ and $\gamma$, whose value might change without mention.
    To be consistent with Simon's notation, we define $p := n/(1-\gamma)$, where $n := N-1$, so that $\gamma = 1 - n/p$.
    We consider a constant $\varepsilon > 0$ that will be chosen small enough depending on $N$ and $\gamma$.
    In view of (\ref{eq_density_flatness}) and (\ref{eq_excess_flatness}), we can choose $\EG$ small enough (depending on $N$ and $\varepsilon$) so that $f(0,r_0) \leq 3/2$ and $E(0,r_0,\R^n) \leq \varepsilon$.
    Now, provided that $\varepsilon$ is small enough (depending on $N$ and $p$), the assumptions of \cite[Theorem 23.1]{simon} are satisfied.

    The proof of \cite[Theorem 23.1]{simon}, gives at \cite[\S 22 (11)]{simon} that there exists an hyperplane $S_0$ such that for all $0 < r \leq r_0$,
    \begin{equation*}
        E(0,r,S_0) \leq c \left(\frac{r}{r_0}\right)^{2(1 - n/p)} E(0,r_0,\R^n).
    \end{equation*}
    We also get at \cite[\S 23 (14) and (19)]{simon} that there exists a constant $a \in (0,1)$ (depending on $N$ and $p$) and a $C^{1,1-n/p}$ function $f : \R^n \to \R$ such that $f(0) = 0$, $f$ is $(1/2)$-Lipschitz,
    \begin{equation*}
        E \cap B(0,a r_0) = \textrm{graph}(f) \cap B(0,a r_0)
    \end{equation*}
    and for all $v \in \R^n \cap B(0, a r_0)$,
    \begin{equation}\label{eq_nabla_f}
        \abs{\nabla f(v) - \nabla f(0)} \leq c \left(\frac{\abs{v}}{r_0}\right)^{1 - n/p} E(0,r_0,\R^n)^\frac{1}{2}.
    \end{equation}
    Since $\lim_{r \to 0} E(0,r,S_0) = 0$ and $E$ is a $C^1$ surface in a neighborhood of $0$, it is easy to see that $T_0 = S_0$, where $T_0$ is the tangent plane to $E$ at $0$.
    Thus, the vector
    \begin{equation*}
        \nu_0 := \frac{-\nabla f(0) + e_N}{\sqrt{1 + \abs{\nabla f(0)}^2}}
    \end{equation*}
    is a unit normal to $S_0$.
    For all $v \in \R^n \cap B(0,a r_0)$, we estimate
    \begin{align*}
        \mathrm{dist}(v + f(v) e_N, S_0) &= \abs{(v + f(v) e_N) \cdot \nu_0}\\
                                         &= \sqrt{1 + \abs{\nabla f(0)}^2}^{-1} \abs{f(v) - \nabla f(0) \cdot v}\\
                                         &\leq \abs{f(v) - \nabla f(0) \cdot v}
    \end{align*}
    and we deduce by the mean value inequality, (\ref{eq_nabla_f}) and (\ref{eq_excess_flatness}) that
    \begin{align*}
        \mathrm{dist}(v + f(v) e_N, S_0) &\leq c \abs{v} \left(\frac{\abs{v}}{r_0}\right)^{1 - n/p} E(0,r_0,\R^n)^\frac{1}{2}\\
                                         &\leq c \abs{v} \left(\frac{\abs{v}}{r_0}\right)^{1 - n/p} \beta(0,2 r_0).
    \end{align*}
    It follows that for all $x \in E \cap B(0,a r_0)$,
    \begin{equation*}
        \mathrm{dist}(x,S_0) \leq c \abs{x} \left(\frac{\abs{x}}{r_0}\right)^{1 - n/p} \beta(0,2 r_0)
    \end{equation*}
    and thus for all $0 < r \leq a r_0$,
    \begin{equation*}
        r^{-1} \sup_{x \in E \cap B(0,r)} \mathrm{dist}(x,S_0) \leq c \left(\frac{r}{r_0}\right)^{1 - n/p} \beta(0,2 r_0).
    \end{equation*}
    This is enough to control the bilateral flatness $\beta(0,r)$ by Remark \ref{rmk_beta_equivalence} (the set $E$ is a $(1/2)$-Lipschitz graph passing through $0$ so it separates $B(0,r)$).
    The case $r \in [a r_0,2 r_0]$ is trivial because
    \begin{equation*}
        \beta(0,r) \leq \left(\frac{2}{a}\right) \beta(0,2r_0)
    \end{equation*}
    and $(r/r_0)^{1 - n/p}$ is bounded from below by $a^{1 - n/p}$ which depends only on $N$ and $p$.
\end{proof}

Here is a new proof of \cite[Lemma 31]{l2} which avoids the ``uniform concentration property'' (not yet available in Griffith setting) by exploiting the limiting properties of minimizing sequences (see \cite{I1}, \cite{I2}, \cite{I3}, \cite{I4} or \cite{Lab}).

\begin{lemma}\label{lem_flatness}
    Let $\EG \in (0,1)$ and $\CG \geq 1$ be the universal constants introduced in Theorem \ref{thm_flatness} for $\gamma = 1/2$.
    Let $E$ be a relatively closed subset of $B(0,1)$ such that $0 \in E$. We assume that there exists $C \geq 1$ such that for all $x \in E$, for all $r > 0$ with $B(x,r) \subset B(0,1)$, we have
    \begin{equation}\label{eq_af_flatness}
        \HH^{N-1}(E \cap B(x,r)) \geq C^{-1} r^{N-1}.
    \end{equation}
    For all $0 < \varepsilon \leq \EG$ and for all $0 < a \leq 1/2$, there exists a constant $\lambda > 0$ (depending on $N$, $C$, $\varepsilon$, $a$) such that if $\beta_E(0,1) \leq \varepsilon$ and if for all deformation competitor $F$ of $E$ in $B(0,1)$, we have
    \begin{equation*}
        \HH^{N-1}(E) \leq \HH^{N-1}(F) + \lambda,
    \end{equation*}
    then
    \begin{equation*}
        \beta_E(0,a) \leq 4 \CG \sqrt{a} \varepsilon.
    \end{equation*}
\end{lemma}

\begin{proof}
    We proceed by contradiction.
    We assume that for all $i \in \N$, there exists a relatively closed set $E_i$ of $B(0,1)$ which contains $0$, which satisfies (\ref{eq_af_flatness}) with a uniform constant $C$ and such that
    \begin{align*}
        \beta_{E_i}(0,1)            & \leq \varepsilon \\
        \beta_{E_i}(0,a)            & \geq  4 \CG \sqrt{a} \varepsilon
    \end{align*}
    but for all deformation $f$ in $B(0,1)$,
    \begin{equation}\label{eq:min_i}
        \HH^{N-1}(E_i) \leq \HH^{N-1}(f(E_i)) + 2^{-i}.
    \end{equation}
    We start by justifying that the sequence $(E_i)$ has a locally uniformly bounded measure in $B(0,1)$.
    Let us fix an index $i$, consider a point $x_0 \in B(0,1) \setminus E_i$ (at least one exists because $\beta_{E_i}(0,1) < 1$) and an open ball $B_0$ such that $x_0 \in B_0 \subset \subset B(0,1)$.
    We let $f: E_i \cap B_0 \to \R^N$ be the radial projection onto $\partial B_0$ centered on $x_0$ and we extend it by the identity map on $E_i \setminus B_0$. This defines a deformation of $E_i$ in $B(0,1)$ that we can use in (\ref{eq:min_i}) to bound
    \begin{equation*}
        \HH^{N-1}(E_i \cap B_0) \leq C \mathrm{diam}(B_0)^{N-1} + 1,
    \end{equation*}
    for some universal constant $C \geq 1$.

    Since the sequence of measures $(\HH^{N-1} \mres E_i)_i$ is locally uniformly bounded in $B(0,1)$, a subsequence (not relabeled) converges to a Radon measure $\mu$ in $B(0,1)$.
    According to \cite[Corollary 4.1]{Lab}, the minimality property (\ref{eq:min_i}) implies that $\mu = \HH^{N-1} \mres E$, where $E$ is the support of $\mu$ in $B(0,1)$. Moreover, $E$ is minimal in the sense that for all deformation $f$ of $E$ in $B(0,1)$, one has
    \begin{equation*}
        \HH^{N-1}(E) \leq \HH^{N-1}(f(E)).
    \end{equation*}
    We also mention \cite[Theorem 1.7]{I5} as an alternative to \cite{Lab}.
    It is itself the conclusion of a series of works on limits of minimizing sequences \cite{I1}, \cite{I2}, \cite{I4}.
    We observe that since $E$ is the support of $\mu = \HH^{N-1} \mres E$, it is in particular a coral set.

    Next, we justify that $(E_i)_i$ converges to $E$ in local Hausdorff distance in $B(0,1)$, that is,
    \begin{equation*}
        E = \set{x \in \Omega | \liminf_i \mathrm{dist}(x,E_i) = 0} = \set{x \in \Omega | \lim_i \mathrm{dist}(x,E_i) = 0}.
    \end{equation*}
    For all $x \in E$, and for all $r > 0$ such that $B(x,r) \subset B(0,1)$, we have 
    \begin{equation*}
        0 < \mu(B(x,r)) \leq \liminf_i \HH^d(E_i \cap B(x,r))
    \end{equation*}
    so for $i$ big enough, $\HH^d(E_i \cap B(x,r)) > 0$. This proves that $\limsup_i \mathrm{dist}(x,E_i) \leq r$ but since $r$ is arbitrary small, we actually have $\lim_i \mathrm{dist}(x,E_i) = 0$.
    For $x \in B(0,1)$ such that $\liminf_i \mathrm{dist}(x,E_i) = 0$, there exists a subsequence $(E_j)_j$ and a sequence of points $(x_j)_j$ such that $x_j \in E_j$ and $x_j \to x$.
    For all $r > 0$ small enough we have $\overline{B}(x,r) \subset B(0,1)$ and for $j$ big enough, we have $B(x_j,r/2) \subset B(x,r)$ so using (\ref{eq_af_flatness}), we get
    \begin{align*}
        \mu(\overline{B}(x,r)) & \geq \limsup_j \HH^d(E_j \cap \overline{B}(x,r))     \\
                               & \geq \limsup_j \HH^d(E_j \cap \overline{B}(x_j,r/2)) \\
                               & \geq C^{-1} (r/2)^{N-1}.
    \end{align*}
    This proves that $x \in E$. We have shown that $(E_i)_i$ converges to $E$ in local Hausdorff distance in $B(0,1)$. According to Remark \ref{rmk_beta}, we have
    \begin{align*}
        \beta_E\left(0,1\right) &\leq \liminf_i \beta_{E_i}(0,1) \leq \varepsilon \\
        \beta_E\left(0,2a\right) &\geq \tfrac{1}{2} \limsup_i \beta_{E_i}(0,a) \geq 2 \CG \sqrt{a} \varepsilon
    \end{align*}
    The set $E$ is a coral minimal in $B(0,1)$ and satisfies $\beta_E(0,1) \leq \varepsilon_*$ so we can apply Theorem \ref{thm_flatness} with $\gamma = 1/2$, $r_0 = 1$, $r = 2a$ to arrive at
    \begin{equation*}
        \beta_E(0,2 a) \leq \CG \sqrt{2 a} \beta_E\left(0,1\right) < 2 \CG \sqrt{a} \varepsilon.
    \end{equation*}
    Contradiction.
\end{proof}


\section{Stopping time and regularity estimates}\label{section_badmass}


\label{section5}

\subsection{Definition of the bad mass}
The bad mass is a quantity that measures how much $K$ differs from being $\tau$-Reifenberg-flat\footnote{We say that a relatively closed subset $K \subset B(x_0,r_0)$ is $\tau$-Reifenberg flat in $B(x_0,r_0)$ for some $\tau > 0$ provided that for all $x \in K \cap B(x_0,9r_0/10)$ and for all $0 < r \leq r_0/10$, we have $\beta_K(x,r) \leq \tau$.}.
According to the Reifenberg parametrization theorem, a $10^{-3}$-Reifenberg-flat set is a Hölder surface.

Let $\EG \in (0,1)$ and $\CG \geq 1$ be the universal constants of Theorem \ref{thm_flatness} for $\gamma = 1/2$.
We fix for the rest of the paper the following universal constants:
\begin{equation}\label{eq_constantes}
    \tau := \min(\EG, 10^{-9}), \quad \quad A_0 := (4\CG)^2 \geq 1 \quad \text{ and } \quad A = U A_0.
\end{equation}
Here, $\U = 10^5$ is the constant used in Section \ref{section_extension}.

Let $(u,K)$ be a Griffith almost-minimizer with gauge $h$ in $\Omega$.
Let $x_0 \in K$, $r_0 > 0$ be such that $B(x_0,r_0) \subset \Omega$, $K$ separates $B(x_0,r_0)$, $h(r_0) \leq \eaf$ and
\begin{equation}\label{eq_varepsilon00}
    \beta(x_0,r_0) \leq \tau/ (400 A).
\end{equation}
We recall that $\eaf$ is the constant used in (\ref{eq_AF}) to ensure the Ahlfors-regularity of $K$ in $B(x_0,r_0)$.
We don't actually need the almost-minimality of $(u,K)$ to define the bad mass but we will need fact that $K$ is Ahlfors-regular.

For $x \in K \cap \overline{B}(x_0,9r_0/10)$, we define the \emph{stopping time function}
\begin{equation*}
    d(x) := \inf \Set{ r>0 | \beta(x,t) \leq \tau \text{ for all } t \in [r, r_0 / 10]}
\end{equation*}
and then we define the \emph{bad mass} of $K$ in $B(x_0,r_0)$ by
\begin{equation*}
    m_K(x_0,r_0):= \frac{1}{r_0^{N-1}} \mathcal{H}^{N-1}(K \cap R(x_0,r_0)),
\end{equation*}
where
\begin{equation*}
    R(x_0,r_0):= \bigcup \Set{B(x,A d(x)) | x \in K \cap \overline{B}\left(x_0,9r_0/10\right) \text{ such that } d(x)>0}.
\end{equation*}
When there is no ambiguity, we write $m(x_0,r_0)$ instead of $m_K(x_0,r_0)$.
Note that we need the above assumptions to consider the bad mass to be well-defined in $B(x_0,r_0)$.

For all $x \in \overline{B}(x_0,9r_0/10)$ and $0 < t \leq r_0/10$, we have
\begin{equation*}
    \beta(x,t)\leq (2 r_0 / t) \beta(x_0,r_0)
\end{equation*}
so
\begin{equation}\label{eq_dx_estimate}
    d(x) \leq 2 \tau^{-1} \beta(x_0,r_0) r_0.
\end{equation}
The condition (\ref{eq_varepsilon00}) gives in particular $d(x) \leq r_0/(200 A)$ so $10 \overline{B}(x, Ad(x)) \subset B(x_0, 99r_0/100)$ and
\begin{equation*}
    m(x_0,r_0) \leq \frac{1}{r_0^{N-1}} \HH^{N-1}(K \cap B(x_0,r_0)).
\end{equation*}
Since $K$ is Ahlfors-regular, the bad mass is bounded from above by the Ahlfors-regularity constant of $K$.

It is immediate from the definition of $d$ that for all $t \in ]d(x),r_0/10]$, we have $\beta(x,t) \leq \tau$.
But if $d(x) > 0$, one can also use the usual scaling property (\ref{eq_beta_scaling}) to see that
\begin{equation*}
    \beta(x,d(x)) = \tau.
\end{equation*}

\begin{remark}\label{rmk_badmass}
    Just like $\omega(x,r)$ and $\beta(x,r)$, the bad mass $m(x,r)$ has scaling properties.
    Let $x \in K \cap B(x_0,r_0)$ and $r > 0$ be such that $B(x,9r/10) \subset B(x_0,9r_0/10)$ and such that $\beta(x,r) \leq \tau/(400 A)$.
    This holds true for example if $\beta(x_0,r_0)$ is small enough compared to $r/r_0$.
    Then one can see that $R(x,r) \subset R(x_0,r_0)$ and
    \begin{equation*}
        m(x,r) \leq \left(\frac{r_0}{r}\right)^{N-1} m(x_0,r_0).
    \end{equation*}
\end{remark}

\subsection{Preparation of the Extension Lemma}

{\color{black}
    In the next sections, we will obtain three estimates controlling respectively the bad mass, the minimality defect and the energy decay (Propositions \ref{prop_badmass_decay}, \ref{prop_flatness_decay} and \ref{prop_energy_decay}).
    We will need that $\beta(x_0,r_0) \leq \varepsilon_0$, where $\varepsilon_0 > 0$ is a small constant such that
    \begin{equation}\label{eq_varepsilon0}
        \text{$\varepsilon_0$ is small enough (depending on $N$)}.
    \end{equation}
    This means at least that we want (\ref{eq_varepsilon00}) to hold so that the bad mass in $B(x_0,r_0)$ is well-defined.
    But we will also invoke (\ref{eq_varepsilon0}) whenever we need $\varepsilon_0$ to be less than a universal constant that we don't try to make explicit.

    The competitors in Propositions \ref{prop_badmass_decay}, \ref{prop_flatness_decay}, \ref{prop_energy_decay} are built using the extension Lemma \ref{lem_extension} with a suitable geometric function.
    The stopping time function $d$ introduced above looks very much like a geometric function except that it is not Lipschitz. We will see in Lemma \ref{lem_delta} how a Vitali covering of bad balls $(B(x_i, d(x_i))_i$ induces a natural geometric function $\delta$.
    When we will apply the extension Lemma with this geometric function $\delta$ and a given radius $\rho \in [r_0/2,3r_0/4]$, we will check that the ``wall set" $\ZZ$ is contained in $\bigcup_{i \in I(\rho)} 10 B_i$, where
    \begin{equation*}
        I(\rho) := \set{i \in I | 10 B_i \cap \partial B(x_0,\rho) \ne \emptyset}.
    \end{equation*}
    The set $\ZZ$ is a domain around $K \cap \partial B(x_0,\rho)$, where the extension $v$ has an unknown elastic energy and does not connect well with the original function $u$.
    Thus, we will force $v = 0$ in $\bigcup_{i \in I(\rho)} 10 B_i$ and this will add a crack of measure bounded by $C \sum_{i \in I(\rho)} r_i^{N-1}$.

    Next, we will see in Lemma \ref{lem_goodradius} that, in average, it is possible to choose $\rho \in [r_0/2, 3r_0/4]$ such that this contribution is bounded by $\beta(x_0,r_0) m(x_0,r_0)$.
    This term will thus appears in the estimates and should be thought of as an error term accounting for the wall set introduced in the extension Lemma.
    If we knew that $K$ was Reifenberg-flat set, there would be no wall set and no bad mass, making the proof of our main theorem much more straightforward.

    Finally, let us note that the gauge will not play any meaningful role in the proof of Propositions \ref{prop_badmass_decay}, \ref{prop_flatness_decay}, \ref{prop_energy_decay} and even Lemma \ref{lem_decay} in the last section of our article.
    We will just make sure to work in balls where $h(r) \leq \eaf$ in order to have the Ahlfors-regularity of $K$.
    The constants in these propositions will depend on $N$ and the Ahlfors-regularity constant of $K$ so they will be universals.
}

We now proceed to extract a Vitali covering of bad balls $(B_i)_i$ and deduce a geometric function.
From the family of open balls
\begin{equation*}
    \set{B(x,Ad(x)) | x \in K \cap \overline{B}\left(x_0,9r_0/10\right) \text{ such that } d(x) > 0},
\end{equation*}
the Vitali covering Lemma yields a countable disjoint subfamily $(B_i)_{i\in I}$ such that
\begin{equation*}
    \bigcup_i B_i \subset R(x_0,r_0) \subset \bigcup_i 4 B_i.
\end{equation*}
We let $x_i \in \overline{B}(x_0,9r_0/10)$ and $r_i = A d(x_i)$ denote the center and the radius of $B_i$, that is, $B_i = B(x_i, r_i)$.
By construction, we always have $r_i > 0$.
According to (\ref{eq_dx_estimate}), we have
\begin{equation*}
    r_i \leq 2 A \tau^{-1} \beta(x_0,r_0) r_0 \leq C \varepsilon_0 r_0,
\end{equation*}
where $C \geq 1$ is a universal constant.
The condition (\ref{eq_varepsilon0}) gives in particular
\begin{equation*}
    r_i \leq r_0/200 \quad \text{and} \quad 10 \overline{B}_i \subset B\left(x_0,99r_0/100\right).
\end{equation*}
We observe that by Ahlfors-regulary of $K$,
\begin{equation*}
    m(x_0,r_0) \simeq \frac{1}{r_0^{N-1}} \sum_{i \in I} r_i^{N-1}.
\end{equation*}
We also have $\beta(x_0,r_0/4) \leq \tau/(400)$ by (\ref{eq_varepsilon0}) so one can see as in Remark \ref{rmk_badmass} that $R(x_0,r_0/4) \subset R(x_0,r_0)$ and bound
\begin{equation}\label{eq_m4}
    m(x_0,r_0/4) \lesssim \frac{1}{r_0^{N-1}} \sum_{i \in I_0} r_i^{N-1},
\end{equation}
where $I_0 = \set{i \in I | 4 B_i \cap B(x_0,r_0/4) \ne \emptyset}$.
Finally, we consider the function $\delta : K \cap \overline{B}(x_0,3r_0/4) \to [0,+\infty)$ defined by
\begin{equation}\label{eq_delta}
    \delta(x) := \inf_{i \in I} \Set{\abs{x - x_i} + r_i} \wedge \mathrm{dist}\left(x, \R^n \setminus \bigcup_i 9 B_i\right),
\end{equation}
where $\wedge$ denotes the minimum.

\begin{lemma}\label{lem_delta}
    Let $(u,K)$ be a Griffith almost-minimizer with gauge $h$ in $\Omega$.
    Let $x_0 \in K$, $r_0 > 0$, $\varepsilon_0 \geq 0$ be such that (\ref{eq_varepsilon0}) holds, $K$ satisfies Hypothesis-$H(\varepsilon_0,x_0,r_0)$ and $h(r_0) \leq \eaf$.
    Then the function $\delta$ defined in (\ref{eq_delta}) is $1$-Lipschitz and
    \begin{enumerate}
        \item for all $k$, $\delta(x_k) = r_k$;
        \item for all $x \in K \cap \overline{B}(x_0,3r_0/4)$, $\delta(x) \leq \max_k (10 r_k) \leq r_0/4$;
        \item for all $x \in K \cap \overline{B}(x_0,3r_0/4)$ and for all $r \in ]\delta(x),r_0/4]$, we have $\beta(x,r) \leq 4 \tau$.
    \end{enumerate}
\end{lemma}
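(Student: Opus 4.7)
The $1$-Lipschitz property and item (1) amount to bookkeeping. Both constituents of $\delta$, namely $f(x) := \inf_{i \in I}\{\abs{x - x_i} + r_i\}$ and $g(x) := \mathrm{dist}(x, K \setminus \bigcup_i 9 B_i)$, are $1$-Lipschitz (the first as an infimum of $1$-Lipschitz affine maps, the second as a distance function), so $\delta = f \wedge g$ is $1$-Lipschitz. Evaluating at $x_k$, the choice $i = k$ yields $f(x_k) \leq r_k$, while disjointness of the Vitali family $(B_i)_i$ gives $\abs{x_k - x_i} \geq r_i + r_k$ for $i \neq k$; combined, $f(x_k) = r_k$. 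Moreover $B(x_k, r_k) = B_k \subset \bigcup_i 9 B_i$ forces $g(x_k) \geq r_k$, and hence $\delta(x_k) = r_k$. For item (2), I split according to whether $x \in \bigcup_i 9 B_i$: if $x \in 9 B_j$ for some $j$ then $\abs{x - x_j} < 9 r_j$ gives $f(x) < 10 r_j$, while otherwise $x \in K \setminus \bigcup_i 9 B_i$ so $g(x) = 0$. Either way $\delta(x) \leq 10 \max_k r_k$, and (\ref{eq_ri_estimate}) together with (\ref{eq_varepsilon0}) gives $10 \max_k r_k \leq r_0/4$.

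Item (3) is the substance of the lemma. The plan is to treat the range of $r$ in two pieces. For $r \in [r_0/8, r_0/4]$, (\ref{rmk_delta_init}) gives $\beta(x, r_0/4) \leq 8 \varepsilon_0$, and the scaling of Remark \ref{rmk_beta} costs at most a factor of $2$, producing $\beta(x, r) \leq 16 \varepsilon_0 \leq \tau$ by (\ref{eq_varepsilon0}). For $r \in ]\delta(x), r_0/8[$, the inequality $r > \min(f(x), g(x))$ splits into two cases. If $r > f(x)$, pick $i$ with $\abs{x - x_i} + r_i < r$ and set $r' := r + \abs{x - x_i} < 2r \leq r_0/4$; since $r' > r_i > d(x_i)$, we have $\beta(x_i, r') \leq \tau$ by the very definition of $d$. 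Letting $P$ be a hyperplane through $x_i$ realizing this, the standard translation argument — using $x \in K \cap B(x_i, r')$ to see $\mathrm{dist}(x, P) \leq \tau r'$, translating $P$ perpendicularly to a parallel hyperplane $P'$ through $x$, and applying the projection $w \mapsto w - ((w - x_i) \cdot \nu)\, \nu$ to pull points of $P' \cap B(x, r)$ back to $P \cap B(x_i, r')$ — yields the two bilateral bounds $\mathrm{dist}(\cdot, P') \leq 2\tau r'$ on $K \cap B(x, r)$ and $\mathrm{dist}(\cdot, K) \leq 2\tau r'$ on $P' \cap B(x, r)$, whence $\beta(x, r) \leq 2\tau r'/r \leq 4\tau$. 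If $r > g(x)$, the witness $y \in K \setminus \bigcup_i 9 B_i$ with $\abs{x - y} < r$ satisfies $d(y) = 0$ whenever $y \in \overline{B}(x_0, 3r_0/4)$ — otherwise $y \in B(y, A d(y)) \subset R(x_0, r_0) \subset \bigcup_i 4 B_i$, contradicting $y \notin \bigcup_i 9 B_i$ — and the exact same translation argument applied at $y$ yields $\beta(x, r) \leq 4\tau$.

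The main obstacle is executing the translation argument cleanly in both subcases of item (3). The key geometric observation is that the orthogonal projection onto $P$ can be used to pull back points of $P' \cap B(x, r)$ into $P \cap B(x_i, r')$, since this projection is non-expansive and does not increase the distance to $x_i$ either; without this observation one would be forced to increase the reference scale and could only recover weaker constants. A secondary subtlety is the edge case in which the $g$-witness $y$ lies in $K \cap B(x_0, r_0) \setminus \overline{B}(x_0, 3r_0/4)$, where $d(y)$ is not defined: this is handled by applying Remark \ref{rmk_beta} at $y$ for an admissible radius $r'' = r + \abs{x - y} < 2r$ such that $B(y, r'') \subset B(x_0, r_0)$ still fits inside the outer ball, and then running the same translation argument, with the smallness of $r$ controlled via the quantitative assumption (\ref{eq_varepsilon0}).
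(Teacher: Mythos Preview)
Your treatment of the Lipschitz property, item~(1), item~(2), and the range $r\in[r_0/8,r_0/4]$ in item~(3) matches the paper's proof. In the range $r\in]\delta(x),r_0/8[$, your $f$-case argument is also correct; it is an explicit version of the inequality $\beta(x,r)\leq (2\cdot 2r/r)\,\beta(x_i,2r)$ from Remark~\ref{rmk_beta}, which the paper simply invokes rather than writing out the translation by hand.

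The one genuine gap is your handling of the $g$-case. The main $g$-subcase (witness $y\in\overline{B}(x_0,3r_0/4)$, hence $d(y)=0$) is fine, but the edge subcase where $y\notin\overline{B}(x_0,3r_0/4)$ does not go through as written. The inclusion $B(y,r'')\subset B(x_0,r_0)$ need not hold: one only has $\abs{y-x_0}+r''< 3r_0/4 + r + 2r = 3r_0/4+3r$, which may exceed $r_0$. Even granting the inclusion, Remark~\ref{rmk_beta} would only yield $\beta(y,r'')\leq(2r_0/r'')\varepsilon_0$, and after translation $\beta(x,r)\leq(4r_0/r)\varepsilon_0$; this is $\leq 4\tau$ only when $r\geq r_0\varepsilon_0/\tau$, which is not guaranteed for the radii under consideration. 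The appeal to~(\ref{eq_varepsilon0}) does not supply the missing lower bound on $r$.

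The paper avoids the $g$-case entirely via a preliminary dichotomy on $d(x)$. If $d(x)=0$, then $\beta(x,r)\leq\tau$ for all $r\in]0,r_0/4]$ directly from the definition of $d$, so item~(3) is immediate. If $d(x)>0$, then $x\in R(x_0,r_0)\subset\bigcup_i 4B_i$, so $x\in 4B_k$ for some $k$; this gives $f(x)\leq\abs{x-x_k}+r_k\leq 5r_k$ while $g(x)\geq\mathrm{dist}(x,\R^N\setminus 9B_k)\geq 5r_k$, whence $\delta(x)=f(x)$ and only the $f$-case is ever needed. This same observation in fact repairs your argument: whenever you land in the $g$-case \emph{only} (that is, $g(x)<r\leq f(x)$), the inequality $f(x)\leq g(x)$ valid for $d(x)>0$ forces $d(x)=0$, and the conclusion is then trivial.
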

Thus, $\delta$ is a geometric function with parameters $(3r_0/4, 4\tau)$.
The condition (\ref{eq_U}) in Section \ref{section_extension} is satisfied with $4 \tau$ instead of $\tau$ and this will allow us to apply Lemma \ref{lem_extension}.
\begin{proof}
    It is readily seen that $\delta$ is $1$-Lipschitz and that $\delta(x_k) \leq r_k$.
    One can see that $\delta(x_k) = r_k$ using the fact that the balls $(B_i)_i$ are disjoint, i.e., for all $i \ne k$,
    \begin{equation*}
        \abs{x_i - x_k} \geq r_i + r_k,
    \end{equation*}
    and that
    \begin{equation*}
        \mathrm{dist}(x_k, \R^\N \setminus \bigcup_i 9 B_i) \geq \mathrm{dist}(x_k, \R^N \setminus 9 B_k) \geq 9 r_k.
    \end{equation*}
    Next, we justify that for all $x \in K \cap \overline{B}(x_0,3r_0/4)$, we have $\delta(x) \leq r_0/4$. If $x \notin \bigcup_i 9 B_i$, then $\delta(x) = 0$. Otherwise, there exists $k$ such that $x \in 9 B_k$ and thus $\delta(x_k) \leq \abs{x - x_k} + r_k \leq 10 r_k \leq r_0/4$.
    Finally, we prove that for all $x \in K \cap \overline{B}(x_0,3r_0/4)$ and for all $r \in ]\delta(x),r_0/4]$, we have $\beta(x,r) \leq 4 \tau$.
    If $d(x) = 0$, this is trivial by definition of $d(x)$.
    Otherwise, there exists $k$ such that $x \in 4 B_k$ because $(4 B_i)_i$ covers $R(x_0,r_0)$.
    In this case,
    \begin{equation*}
        \inf_{i \in I} \abs{x - x_i} + r_i \leq \abs{x - x_k} + r_k \leq 5 r_k
    \end{equation*}
    whereas
    \begin{equation*}
        \mathrm{dist}(x, \R^N \setminus 9 B_k) \geq 5 r_k
    \end{equation*}
    so $\delta(x) = \inf_{i \in I} \abs{x - x_i} + r_i$.
    If $r \in ]\delta(x), r_0/20]$, there exists $i$ such that $\abs{x - x_i} + r_i \leq r$ so in particular, $B(x,r) \subset B(x_i,2r)$ and $r \geq r_i > d(x_i)$.
    As $2 r \in (d(x_i),r_0/10)$, we have $\beta(x_i,2r) \leq \tau$ and we deduce that
    \begin{equation*}
        \beta(x,r) \leq 4 \beta(x_i,2r) \leq 4 \tau.
    \end{equation*}
    If $r \in [r_0/20, r_0/10]$, we use $\beta(x_0,r_0) \leq \varepsilon_0$ and $\varepsilon_0 \leq \tau / 40$ to get directly
    \begin{equation*}
        \beta(x,r) \leq 40 \beta(x_0,r_0) \leq \tau.
    \end{equation*}
\end{proof} 

\begin{lemma}[Selection of good radii]\label{lem_goodradius}
    Let $(u,K)$ be a Griffith almost-minimizer with gauge $h$ in $\Omega$.
    Let $x_0 \in K$, $r_0 > 0$, $\varepsilon_0 \geq 0$ be such that (\ref{eq_varepsilon0}) holds, $K$ satisfies Hypothesis-$H(\varepsilon_0,x_0,r_0)$ and $h(r_0) \leq \eaf$.
    Then there exists $\rho \in [r_0/2, 3r_0/4]$ such that
    \begin{equation*}
        \sum_{i \in I(\rho)} r_i^{N-1}  \leq C_0 \beta(x_0,r_0) m(x_0,r_0) r_0^{N-1},
    \end{equation*}
    where
    \begin{equation*}
        I(\rho) := \set{i \in I | 10 B_i \cap \partial B(x_0,\rho) \ne \emptyset}.
    \end{equation*}
    and $C_0 \geq 1$ is a universal constant.
\end{lemma}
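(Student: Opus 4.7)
The idea is a standard Fubini/averaging argument over $\rho \in [r_0/2, 3 r_0/4]$, exploiting the fact that each ball $10 B_i$ is thin enough (of radius $\lesssim \varepsilon_0 r_0$) to intersect only a short range of spheres $\partial B(x_0,\rho)$.

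First I would observe the key geometric fact: for each $i \in I$, the set
\[
    J_i := \set{\rho \in [r_0/2, 3 r_0/4] | 10 B_i \cap \partial B(x_0,\rho) \ne \emptyset}
\]
is contained in the interval $[\abs{x_i - x_0} - 10 r_i, \abs{x_i - x_0} + 10 r_i]$, hence $\LL^1(J_i) \leq 20 r_i$. Integrating the indicator $\mathbf{1}_{i \in I(\rho)} = \mathbf{1}_{J_i}(\rho)$ gives
\[
    \int_{r_0/2}^{3 r_0/4} \sum_{i \in I(\rho)} r_i^{N-1} \dd{\rho} = \sum_{i \in I} r_i^{N-1} \LL^1(J_i) \leq 20 \sum_{i \in I} r_i^N \leq 20 \bigl(\max_{i \in I} r_i\bigr) \sum_{i \in I} r_i^{N-1}.
\]

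Next I would bound the two factors. By \eqref{eq_ri_estimate}, $\max_i r_i \leq C \varepsilon_0 r_0$ for a universal constant $C$. For the other factor I use that $(B_i)_i$ is disjoint, $x_i \in K$, and the Ahlfors-regularity of $K$ (with constant depending on $N$, $\alpha$): the lower bound yields
\[
    \sum_{i \in I} r_i^{N-1} \leq C \sum_{i \in I} \HH^{N-1}(K \cap B_i) \leq C \HH^{N-1}\bigl(K \cap \bigcup_i B_i\bigr) \leq C \HH^{N-1}(K \cap R(x_0,r_0)) = C r_0^{N-1} m(x_0,r_0),
\]
since $\bigcup_i B_i \subset R(x_0,r_0)$ by \eqref{def_Bi}. (To apply Ahlfors-regularity inside $B_i \subset B(x_0, 9 r_0/10) \subset \Omega$, which is granted by \eqref{eq_varepsilon0}.)

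Combining these and using $\int_{r_0/2}^{3 r_0/4} = r_0/4$, there exists some $\rho \in [r_0/2, 3 r_0/4]$ satisfying
\[
    \sum_{i \in I(\rho)} r_i^{N-1} \leq \frac{4}{r_0} \cdot 20 \bigl(C \varepsilon_0 r_0\bigr) \bigl(C r_0^{N-1} m(x_0,r_0)\bigr) = C_0 \varepsilon_0 r_0^{N-1} m(x_0,r_0),
\]
and dividing by $r_0^{N-1}$ yields the claim. The proof is essentially routine; there is no serious obstacle beyond carefully chaining the disjointness of $(B_i)_i$, Ahlfors-regularity of $K$, and the size estimate $r_i \lesssim \varepsilon_0 r_0$. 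The only thing worth verifying is that the balls involved stay in a region where Ahlfors-regularity applies, which is ensured by the smallness of $\varepsilon_0$ imposed in \eqref{eq_varepsilon0}.
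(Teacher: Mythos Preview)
Your proof is correct and follows essentially the same approach as the paper: both use a Fubini/averaging argument over $\rho \in [r_0/2,3r_0/4]$, the bound $\LL^1(\{\rho : i \in I(\rho)\}) \lesssim r_i$, the size estimate $r_i \lesssim \varepsilon_0 r_0$, and the equivalence $\sum_i r_i^{N-1} \simeq r_0^{N-1} m(x_0,r_0)$ coming from Ahlfors-regularity. You simply spell out the Ahlfors-regularity step more explicitly than the paper, which records it just before \eqref{eq_m4}.
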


\begin{proof}
    Throughout the proof, the letter $C$ is a universal constant $\geq 1$ whose value might change from one line to another.
    We select a radius $\rho \in [r_0/2, 3r_0/4]$ such that the mass of the balls $(10 B_i)_i$ that are meeting $\partial B(0,\rho)$ is less than average.
    More precisely, we choose $\rho \in [r_0/2, 3r_0/4]$ such that
    \begin{equation*}
        \sum_{i \in I(\rho)} r_i^{N-1}  \leq \frac{4}{r_0} \int_{r_0/2}^{3r_0/4} \left(\sum_{i \in I(t)} r_i^{N-1}\right) \dd{t},
    \end{equation*}
    where
    \begin{equation*}
        I(t) := \set{i \in I | 10 B_i \cap \partial B(0,t) \ne \emptyset}.
    \end{equation*}
    Then we use Fubini's theorem and $r_i \leq C \varepsilon_0 r_0$ to compute
    \begin{align*}
        \int_{r_0/2}^{3r_0/4}  \sum_{i \in I(t)} r_i^{N-1} \dd{t}   &\leq \sum_{i\in I} \int_{\set{t | i \in   I(t)}} r_i^{N-1} \dd{t}\\
                                                                    &\leq C \sum_{i\in I} r_i^N \\
                                                                    &\leq C \varepsilon_0 m(x_0,r_0) r_0^N.
    \end{align*}
    We can apply the whole proof with $\varepsilon_0 = \beta(x_0,r_0)$.
\end{proof}

\subsection{Decay of the bad mass}

The crucial fact to estimate the quantity $m(x_0,r_0)$ is that in each bad ball, we can find a deformation competitor of $K$ which has less measure in a quantified way.
We will use this observation in Proposition \ref{prop_badmass_decay} to build a deformation $L$ of $K$ in $B(x_0,r_0)$ such that $L = K$ in $\Omega \setminus B(x_0,r_0)$ and 
\begin{equation*}
    \HH^{N-1}(L \cap B(x_0,r_0)) \leq \HH^{N-1}(K \cap B(x_0,r_0)) - C^{-1} m(x_0,r_0/4) r_0^{N-1}
\end{equation*}
We will then build an appropriate function $v \in W^{1,2}_{\mathrm{loc}}(\Omega \setminus L)$ such that $(v,L)$ is a competitor of $(u,K)$ in $B(x_0,r_0)$ and compare their Griffith energies to bound $m(x_0,r_0/4)$ from above.

\begin{lemma}[Win of bad mass]\label{lem_competitor}
    Let $(u,K)$ be a Griffith almost-minimizer with gauge $h$ in $\Omega$.
    Let $x_0 \in K$, $r_0 > 0$, $\varepsilon_0 \geq 0$ be such that (\ref{eq_varepsilon0}) holds, $K$ satisfies Hypothesis-$H(\varepsilon_0,x_0,r_0)$ and $h(r_0) \leq \eaf$.
    For all $i$, there exists a deformation competitor $L$ of $K$ in $D_i := B(x_i, A_0 d(x_i))$ such that
    \begin{equation*}
        \HH^{N-1}(K \cap D_i) - \HH^{N-1}(L \cap D_i) \geq C^{-1} r_i^{N-1},
    \end{equation*}
    for some universal constant $C \geq 1$.
\end{lemma}

\begin{proof}
    Let $i$ be a fixed index.
    By definition of $d(x_i)$ and since $A_0 d(x_i) \in ]d(x_i),r_0/10]$, we have
    \begin{align*}
        \beta(x_i, A_0 d(x_i)) &\leq \tau\\
        \beta(x_i,d(x_i)))     &\geq \tau.
    \end{align*}
    We recall that we have chosen $\tau$ and $A_0$ in (\ref{eq_constantes}) in such a way that $\tau \leq \EG$ and $A_0 \geq (4 C_*)^2$.
    We apply Lemma \ref{lem_flatness} in $D_i$ with $\varepsilon = \tau$ and $a = A_0^{-1}$ and since
    \begin{align*}
        \beta(x_i, A_0 d(x_i)) &\leq \varepsilon\\
        \beta(x_i,d(x_i))) &\geq 4 \CG \sqrt{a} \varepsilon,
    \end{align*}
    there exists a universal constant $\lambda > 0$ and a deformation competitor $L$ or $K$ in $B(x,r)$ such that
    \begin{equation*}
        \HH^{N-1}(K \cap D_i) - \HH^{N-1}(L \cap D_i) \geq \lambda (A_0 d(x_i))^{N-1}.
    \end{equation*}
\end{proof}

\begin{proposition}[Control of the bad mass]\label{prop_badmass_decay}
    Let $(u,K)$ be a Griffith almost-minimizer with gauge $h$ in $\Omega$.
    Let $x_0 \in K$, $r_0 > 0$, $\varepsilon_0 \geq 0$ be such that (\ref{eq_varepsilon0}) holds, $K$ satisfies Hypothesis-$H(\varepsilon_0,x_0,r_0)$ and $h(r_0) \leq \eaf$.
    Then we have
    \begin{equation*}
        m(x_0,r_0/4)\leq C \left[\omega(x_0,r_0) + \beta(x_0,r_0) m(x_0,r_0) + h(r_0)\right],
    \end{equation*}
    for some universal constant $C \geq 1$.
\end{proposition}

\begin{proof}
    Throughout the proof, the letter $C$ is a universal constant $\geq 1$ whose value might change from one line to another.
    We recall that
    \begin{equation*}
        m(x_0,r_0/4) \leq \frac{C}{r_0^{N-1}} \sum_{i \in I_0} r_i^{N-1},
    \end{equation*}
    where $I_0 = \set{i \in I | 4 B_i \cap B(x_0,r_0/4) \ne \emptyset}$, see (\ref{eq_m4}).
    Let us note that for $i \in I_0$, we have $\overline{B}_i \subset B(x_0,r_0/2)$ (because $r_i < r_0/200$).
    We find more convenient to work with a finite number of balls so we consider a finite subset $J \subset I_0$ such that
    \begin{equation}\label{eq_m4bis}
        m(x_0,r_0/4) \leq \frac{C}{r_0^{N-1}} \sum_{i \in J} r_i^{N-1}.
    \end{equation}
    For all $i \in J$, Lemma \ref{lem_competitor} yields a competitor $L_i$ of $K$ in the ball $D_i = B(x_i,A_0 d(x_i)) \subset B_i$ such that
    \begin{equation}\label{eq_Li}
        \HH^{N-1}(L \cap D_i) \leq \HH^{N-1}(K \cap D_i) - C^{-1} r_i^{N-1}.
    \end{equation}
    We introduce
    \begin{equation*}
        L := \left(K \setminus \bigcup_{i \in J} D_i\right) \cup \bigcup_{i \in J} \left(L_i \cap D_i\right).
    \end{equation*}
    For each $i \in J$, there exists a deformation $f_i$ of $K$ in $D_i$ such that $L_i = f_i(K)$.
    The balls $(\overline{D}_i)_{i \in J}$ are mutually disjoint and contained in $B(x_0,r_0/2)$ so we can glue the deformation $f_i$ to obtain a deformation $f$ of $K$ in $B(x_0,r_0/2)$ such that $L = f(K)$.
    Let us note that $L$ coincides with $K$ outside $B(x_0,r_0/2)$.
    We recall that there exists an hyperplane $P_0$ (of unit vector $\nu_0$) passing through $x_0$ such that $K \cap B(x_0,r_0) \subset \set{\mathrm{dist}(\cdot,P_0) \leq \varepsilon_0}$ and such that the sets
    \begin{equation*}
        \begin{gathered}
            \set{x \in B(x_0,r_0) | (x - x_0) \cdot \nu_0 > \varepsilon_0 r_0}\\
            \set{x \in B(x_0,r_0) | (x - x_0) \cdot \nu_0 < -\varepsilon_0 r_0}
        \end{gathered}
    \end{equation*}
    belong to different connected component of $B(x_0,r_0) \setminus K$, denoted by $\Omega_1$ and $\Omega_2$.
    According to the theory of Borsuk maps (see Remark \ref{rmk_borsuk}), the sets
    \begin{equation*}
        \begin{gathered}
            \set{x \in B(x_0,r_0) | (x - x_0) \cdot \nu_0 > r_0/2}\\
            \set{x \in B(x_0,r_0) | (x - x_0) \cdot \nu_0 < -r_0/2}
        \end{gathered}
    \end{equation*}
    also lie in different connected component of $B(x_0,r_0) \setminus L$, denoted by $X_1$ and $X_2$ respectively, and we have for $h=1,2$,
    \begin{equation}\label{eq_X}
        X_h \subset \Omega_h \cup \bigcup_{i \in J} D_i.
    \end{equation}
    Let us note that $X_h \setminus B(x_0,r_0/2) = \Omega_h \setminus B(x_0,r_0/2)$.
    To conclude this paragraph, we use (\ref{eq_Li}) and (\ref{eq_m4bis}) to estimate
    \begin{align}
        \HH^{N-1}(L \cap B(x_0,r_0))    &\leq \HH^{N-1}(K \cap B(x_0,r_0)) - C^{-1} \sum_{i \in J} r_i^{N-1}\notag\\
                                        &\leq \HH^{N-1}(K \cap B(x_0,r_0)) - C^{-1} m(x_0,r_0/4) r_0^{N-1}.\label{eq_L}
    \end{align}

    We would like to build a function $v \in LD(B(x_0,r_0) \setminus L)$ such that $(v,L)$ is a competitor of $(u,K)$ in $B(x_0,9r_0/10)$ and the energy of $v$ is under control, i.e.,
    \begin{equation*}
        \int_{B(x_0,r_0) \setminus L} \abs{e(v)}^2 \dd{x} \leq C \int_{B(x_0,r_0) \setminus K} \abs{e(u)}^2 \dd{x}.
    \end{equation*}
    For this purpose, we use the extension $v_h$ built in Lemma \ref{lem_extension}. However, there is a certain set $\ZZ$ where we don't control the energy of $v_h$ so we will force $v_h = 0$ in an open set $O$ containing $\ZZ$ and we will add $\partial O$ to the crack.

    We use Lemma \ref{lem_goodradius} to select a radius $\rho \in [r_0/2, 3 r_0/4]$ such that
    \begin{equation}\label{eq_choice_rho}
        \sum_{i \in I(\rho)} r_i^{N-1} \leq C \varepsilon_0 m(x_0,r_0) r_0^{N-1},
    \end{equation}
    where
    \begin{equation*}
        I(\rho) = \set {i \in I | 10 B_i \cap \partial B(x_0,\rho) \ne \emptyset}.
    \end{equation*}
    Then we introduce
    \begin{equation*}
        G := L \cup \bigcup_{i \in I(\rho)} \partial (10 B_i).
    \end{equation*}
    We observe that
    \begin{equation*}
        G \setminus B\left(x_0, 9r_0/10\right) = K \setminus B\left(x_0, 9r_0/10\right)
    \end{equation*}
    because $L$ coincides with $K$ outside $B(x_0,r_0/2)$ and for all $i \in I(\rho)$, $10 \overline{B}_i \subset B(x_0, 9 r_0/10)$.
    Next, we are going to justify that $G$ is relatively closed in $\Omega$.
    The family of spheres $(\partial (10 B_i))_{i \in I(\rho)}$ might have accumulation points but we are going to see that they can only be located on $K \cap \partial B(x_0,\rho)$, which is a subset of $L$ and thus of $G$.
    For 
    \begin{equation*}
        x \in \overline{\bigcup_{i \in I(\rho)} \partial (10 B_i)} \setminus \bigcup_{i \in I(\rho)} \partial (10 B_i),   
    \end{equation*}
    we can extract a sequence of distincts elements $(10 B_{i_k})_{k \in \mathbf{N}}$ from the family $(10 B_i)_{i \in I(\rho)}$ such that $\mathrm{dist}(x, 10 \overline{B}_{i_k}) \to 0$ when $k \to +\infty$.
    The balls $(B_{i_k})_k$ are disjoint and contained in $B(x_0,r_0)$ so their radii must go to $0$ as $k$ goes to $+\infty$.
    We deduce that $\mathrm{dist}(x,x_{i_k}) \to 0$, where $x_{i_k}$ denotes the center of $B_{i_k}$. 
    But since the points $x_{i_k}$ belong to $K$ and the balls $10 B_{i_k}$ meet $\partial B(x_0,\rho)$ by definition of $I(\rho)$, we must have $x \in K \cap \partial B(x_0,\rho) \subset L$.
    This proves that we have in fact
    \begin{equation*}
        G = L \cup \overline{\bigcup_{i \in I(\rho)} \partial (10 B_i)}.
    \end{equation*}
    To conclude this paragraph, we use (\ref{eq_choice_rho}) and (\ref{eq_L}) to estimate
    \begin{align}
        \HH^{N-1}(G \cap B(x_0,r_0)) &\leq \HH^{N-1}(L \cap B(x_0,r_0)) + \sum_{i \in I(\rho)} \HH^{N-1}(\partial (10 B_i))\notag\\
                                     &\leq \HH^{N-1}(K \cap B(x_0,r_0)) - C^{-1} m(x_0,r_0/4) r_0^{N-1} + C \varepsilon_0 m(x_0,r_0) r_0^{N-1}.\label{eq_energy_G}
    \end{align}

    Now, we build a function $v \in LD(B(x_0,r_0) \setminus G)$ such that $(v,G)$ is a competitor of $(u,K)$ in $B(x_0,9r_0/10)$ and the energy of $v$ is under control, i.e.,
    \begin{equation}\label{eq_energy_v}
        \int_{B(x_0,r_0) \setminus G} \abs{e(v)}^2 \dd{x} \leq C \int_{B(x_0,r_0) \setminus K} \abs{e(u)}^2 \dd{x}.
    \end{equation}
    We apply Lemma \ref{lem_extension} with respect to the geometric function $\delta$ defined in (\ref{eq_delta}) and the radius $\rho \in [r_0/2, 3r_0/4]$ selected above.
    We obtain functions $v_h \in LD_{\mathrm{loc}}(V_h)$ (for $h=1,2$) and a relatively closed subset $S_h$ of $V_h = \Omega_h \cup W$ such that
    \begin{equation*}
        \W \subset S_h \subset \WW,\qquad
        v_h = u \ \text{in} \ V_h \setminus S_h
    \end{equation*}
    and
    \begin{equation*}
        \int_{V_h \cap B(x_0,\rho) \setminus \ZZ} \abs{e(v_h)}^2 \dd{x} \leq C \int_{B(x_0,\rho) \cap \Omega_h} \abs{e(u)}^2 \dd{x}.
    \end{equation*}
    We recall that
    \begin{align*}
        \W      &= \bigcup \set{B(x,\delta(x)/U) | x \in K \cap \overline{B}(x_0,\rho)}\\
        \WW     &= \bigcup \set{B(x,10\delta(x)/U) | x \in K \cap \overline{B}(x_0,\rho)}
    \end{align*}
    and
    \begin{equation*}
        \ZZ := \bigcup \set{B(x,10 \delta(x)/U) | x \in K \cap \overline{B}(x_0,\rho),\ B(x,50 \delta(x)/U) \cap \partial B(x_0,\rho) \ne \emptyset}.
    \end{equation*}
    We are going to justify that
    \begin{equation}\label{eq_XV}
        X_h \subset V_h
    \end{equation}
    and
    \begin{equation}\label{eq_FB}
        \ZZ \subset \bigcup_{i \in I(\rho)} 10 B_i.
    \end{equation}
    The inclusion (\ref{eq_XV}) is a rather easy implication of (\ref{eq_X}) and the fact that, since $\delta(x_i) = r_i = A d(x_i)$ and $A = U A_0$, we have
    \begin{equation*}
        D_i = B(x_i, A_0 d(x_i)) = B(x_i, \delta(x_i)/U).
    \end{equation*}
    Now, we consider $x \in K \cap \overline{B}(x_0,\rho)$ such that $B(x, 50 \delta(x)/U) \cap \partial B(x_0,\rho) \ne \emptyset$.
    As $x$ belongs to $K \cap \overline{B}(x_0,\rho)$ and is such that $\delta(x) > 0$, then it follows from the definition of $\delta$ that there exists $i$ such that $x \in 9 B_i$ so
    \begin{equation*}
        \delta(x) \leq \abs{x - x_i} + r_i \leq 10 r_i.
    \end{equation*}
    In addition, recall that $\U = 10^5$ so $B(x,50 \delta(x)/U) \subset 10 B_i$.
    We have proved (\ref{eq_FB}).

    In this last paragraph, we are going to use repeatedly and without mention the fact that when $V \subset B(x_0,r_0)$ is an open set that is disjoint from $K \cap \partial B(x_0,\rho)$ (we think mainly of the case $V \subset X_h$), then the set $V \setminus \bigcup_{i \in I(\rho)} 10 \overline{B}_i$ is open.
    This is not straightforward because $\bigcup_{i \in I(\rho)} 10 \overline{B}_i$ may not be closed but we have already seen this kind of argument: for the points
    \begin{equation*}
        x \in \overline{\bigcup_{i \in I(\rho)} 10 B_i} \setminus \bigcup_{i \in I(\rho)} 10 \overline{B}_i,
    \end{equation*}
    we can show that $x \in K \cap \partial B(x_0,\rho)$ so $x \notin V$.
    This proves that
    \begin{equation*}
        V \setminus \bigcup_{i \in I(\rho)} 10 \overline{B}_i = V \setminus \overline{\bigcup_{i \in I(\rho)} 10 B_i}.
    \end{equation*}
    We are ready to define our function $v$.
    According to (\ref{eq_XV}), we have $X_h \subset V_h$ so $v_h \in LD_{\mathrm{loc}}(X_h)$.
    By (\ref{eq_FB}) and the comment just below Lemma \ref{lem_extension}, we actually have
    \begin{equation*}
        v \in LD\left(X_h \setminus \bigcup_{i \in I(\rho)} 10 \overline{B}_i\right)
    \end{equation*}
    and
    \begin{equation*}
        v_h = u \quad \text{in} \quad X_h \setminus \left(B(x_0,\rho) \cup \bigcup_{i \in I(\rho)} 10 \overline{B}_i\right).
    \end{equation*}
    We finally define $v \in LD(B(x_0,r_0) \setminus G)$ by
    \begin{equation*}
        v =
        \begin{cases}
            v_1 &\text{in} \ X_1 \setminus \bigcup_{i \in I(\rho)} 10 \overline{B}_i\\
            v_2 &\text{in} \ X_2 \setminus \bigcup_{i \in I(\rho)} 10 \overline{B}_i\\
            u   &\text{in} \ B(x_0,r_0) \setminus \left(L \cup X_1 \cup X_2 \cup \bigcup_{i \in I(\rho)} 10 \overline{B}_i\right)\\
            0   &\text{in} \ \bigcup_{i \in I(\rho)} 10 B_i.
        \end{cases}
    \end{equation*}
    This is a well-defined function in $LD(B(x_0,r_0) \setminus G)$ because the piecewise domains in the construction are disjoint open sets which cover $\Omega \setminus G$.
    We have $v = u$ outside $B(x_0,9r_0/10)$ because $B(x_0,\rho) \subset B(x_0,9r_0/10)$ and for all $i \in I(\rho)$, $10 \overline{B}_i \subset B(x_0,9r_0/10)$.
    The pair $(v,G)$ is thus a competitor of $(u,K)$ in $B(x_0,9r_0/10)$ and we can compare their energies (with (\ref{eq_energy_G}) and (\ref{eq_energy_v})) to obtain
    \begin{multline*}
        \int_{B(x_0,r_0) \setminus K} \abs{e(u)}^2 \dd{x} + \HH^{N-1}(K \cap B(x_0,r_0)) \leq C \int_{B(x_0,r_0) \setminus K} \abs{e(u)}^2 \dd{x} + \HH^{N-1}(K \cap B(x_0,r_0)) \\- C^{-1} m(x_0,r_0/4) r_0^{N-1} + C \varepsilon_0 m(x_0,r_0) r_0^{N-1} + h(r_0) r_0^{N-1}.
    \end{multline*}
    Of course, we can apply the whole proof with $\varepsilon_0 = \beta(x_0,r_0)$.
\end{proof}

\subsection{Control of the minimality defect}

{\color{black}
    The next proposition shows that the minimality defect is bounded by the normalized elastic energy and the error terms $\beta m$ and $h$.
    This control in turns the flatness via Lemma \ref{lem_flatness}.
}

\newcommand{\cc}{\kappa}
\begin{proposition}[Control of the minimality defect]\label{prop_flatness_decay}
    Let $(u,K)$ be a Griffith almost-minimizer with gauge $h$ in $\Omega$.
    Let $x_0 \in K$, $r_0 > 0$ and $\varepsilon_0 \geq 0$ be such that (\ref{eq_varepsilon0}) holds, $K$ satisfies Hypothesis-$H(\varepsilon_0,x_0,r_0)$ and $h(r_0) \leq \eaf$.
    Then for all deformation competitor $L$ of $K$ in $B(x_0, \cc r_0)$, we have
    \begin{multline}\label{eq_flatness_decay}
        \left[\HH^{N-1}(K \cap B(x_0, \cc r_0)) - \HH^{N-1}(L \cap B(x_0, \cc r_0))\right] \\ \leq C \left[\omega(x_0,r_0) + \beta(x_0,r_0) m(x_0,r_0) + h(r_0)\right] r_0^{N-1},
    \end{multline}
    for some universal constant $C \geq 1$ and $\cc = 10^{-6}$.
\end{proposition}

{\color{black}
    We don't know apriori what is the competitor of $K$ with minimal area in $B(x_0,\cc r_0)$.
    In dimension $2$, if $K \cap \partial B(x_0,\cc r_0)$ is composed of two points $p,q$, the best possible competitor consists in replacing $K$ by the segment $[p,q]$. 
    In higher dimension, if $K$ coincides with a Lipschitz graph on $\partial B(x_0, \cc r_0)$, a clever choice of competitor would be the harmonic graph spanning $K \cap \partial B(x_0, \cc r_0)$.
    These are the typical competitors used in the theory of almost-minimal sets.

    Let us explain how Proposition \ref{prop_flatness_decay} allows to control the flatness.
    Let $\EG \in (0,1)$ and $\CG \geq 1$ be the universal constants introduced in Theorem \ref{thm_flatness} for $\gamma = 1/2$.
    Let us assume $\varepsilon_0$ small enough so that $\varepsilon_0 \leq \EG$ and let us fix a universal parameter $0 < a \leq 1/2$ such that $4 \CG \sqrt{a} \leq 1$.
    Since $\beta(x_0,r_0) \leq \varepsilon_0$, Lemma \ref{lem_flatness} tells us that if the right-hand side of (\ref{eq_flatness_decay}) is small enough compared to $\varepsilon_0$, then $\beta(x_0, a \cc r_0) \leq 4 \CG \sqrt{a} \varepsilon_0 \leq \varepsilon_0$.
    Hence we see that when $\omega$, $m$ and $h$ are small, the flatness decays.
}

\begin{proof}
    Throughout the proof, the letter $C$ is a universal constant $\geq 1$ whose value might change from one line to another.
    The proof is very similar to the proof of Proposition \ref{prop_badmass_decay} but instead of deforming $K$ in small balls $D_i$, we will deform $K$ is a single large ball at the center.
    This will force us to enlarge the geometric function $\delta$ at the center.
    Most of the technical details are the same as in the proof Proposition \ref{prop_badmass_decay} so we will often skip them. 

    We will work with the geometric function $\delta_1 : K \cap \overline{B}(x_0,3r_0/4) \to [0,r_0/4]$ defined by
    \begin{equation}\label{eq_delta1}
        \delta_1(x) := \max\set{r_0/4 - \abs{x - x_0}, \delta(x)}.
    \end{equation}
    It is still a $1$-Lipschitz geometric function with parameters $(3r_0/4, 4\tau)$ but now we have $\delta_1(x_0) = r_0/4$.
    We recall that $\U = 10^5$ and we observe that for $\cc = 1 / (10 U) = 10^{-6}$, we have $B(x_0, \cc r_0) \subset B(x_0, \delta_1(x_0)/U)$.
    This means that the domain of the extension in Lemma \ref{lem_extension} will be large enough to contain $B(x_0, \cc r_0)$.

    We fix $f$ be a deformation of $K$ in $B(x_0, \cc r_0)$ and we introduce $L := f(K)$.
    We note that $L$ coincides with $K$ outside $B(x_0,\cc r_0) \subset B(x_0,r_0/2)$.
    We recall that there exists an hyperplane $P_0$ (of unit vector $\nu_0$) passing through $x_0$ such that $K \cap B(x_0,r_0) \subset \set{\mathrm{dist}(\cdot,P_0) \leq \varepsilon_0}$ and such that the sets
    \begin{equation*}
        \begin{gathered}
            \set{x \in B(x_0,r_0) | (x - x_0) \cdot \nu_0 > \varepsilon_0 r_0}\\
            \set{x \in B(x_0,r_0) | (x - x_0) \cdot \nu_0 < -\varepsilon_0 r_0}
        \end{gathered}
    \end{equation*}
    belong to different connected component of $B(x_0,r_0) \setminus K$, denoted by $\Omega_1$ and $\Omega_2$.
    According to the theory of Borsuk maps (see Remark \ref{rmk_borsuk}), the sets
    \begin{equation*}
        \begin{gathered}
            \set{x \in B(x_0,r_0) | (x - x_0) \cdot \nu_0 > r_0/2}\\
            \set{x \in B(x_0,r_0) | (x - x_0) \cdot \nu_0 < -r_0/2}
        \end{gathered}
    \end{equation*}
    also lie in different connected component of $B(x_0,r_0) \setminus L$, denoted by $X_1$ and $X_2$ respectively, and we have 
    \begin{equation}\label{eq_X2}
        X_h \subset \Omega_h \cup B(x_0, \cc r_0).
    \end{equation}
    We note that $X_h \setminus B(x_0,r_0/2) = \Omega \setminus B(x_0,r_0/2)$.

    We use Lemma \ref{lem_goodradius} to select a radius $\rho \in [r_0/2, 3r_0/4]$ such that
    \begin{equation*}
        \sum_{i \in I(\rho)} r_i^{N-1} \leq C \varepsilon_0 m(x_0,r_0) r_0^{N-1},
    \end{equation*}
    where
    \begin{equation*}
        I(\rho) = \set {i \in I | 10 B_i \cap \partial B(x_0,\rho) \ne \emptyset}.
    \end{equation*}
    Then we define
    \begin{equation*}
        G := L \cup \bigcup_{i \in I(\rho)} \partial (10 B_i).
    \end{equation*}
    We observe that $G$ is relatively closed in $\Omega$ and
    \begin{equation*}
        G \setminus B\left(x_0,9r_0/10\right) = K \setminus B\left(x_0,9 r_0/10\right).
    \end{equation*}
    We apply Lemma \ref{lem_extension} with respect to the geometric function $\delta_1$ defined in (\ref{eq_delta1}).
    We obtain functions $v_h \in LD_{\mathrm{loc}}(V_h)$ (for $h=1,2$) and a relatively closed subsets $S_h$ of $V_h = \Omega_h \cup W$ such that
    \begin{equation*}
        \W \subset S_h \subset \WW,\qquad
        v_h = u \ \text{in} \ V_h \setminus S_h
    \end{equation*}
    and
    \begin{equation*}
        \int_{V_h \cap B(x_0,\rho) \setminus \ZZ} \abs{e(v_h)}^2 \dd{x} \leq C \int_{B(x_0,\rho) \cap \Omega_h} \abs{e(u)}^2 \dd{x}.
    \end{equation*}
    We check that
    \begin{equation}\label{eq_XV2}
        X_h \subset V_h
    \end{equation}
    and
    \begin{equation}\label{eq_Z2}
        \ZZ \subset \bigcup_{i \in I(\rho)} 10 B_i.
    \end{equation}
    The proof of the similar inclusions in Proposition \ref{prop_badmass_decay} does not apply directly because $\delta_1$ is larger than $\delta$.
    The inclusion (\ref{eq_XV2}) still works thanks to (\ref{eq_X2}) and because we have chosen $\cc$ so that $B(x_0, \cc r_0) \subset B(x_0, \delta_1(x_0)/U)$.
    We pass to (\ref{eq_Z2}).
    For $x \in K \cap \overline{B}(x_0,\rho)$ such that $B(x, 50 \delta_1(x)/U) \cap \partial B(x_0,\rho) \ne \emptyset$, we have
    \begin{equation*}
        \abs{x - x_0} \geq \rho - 50 \delta_1(x)/U \geq r_0/2 - 50 r_0/(4 U) \geq r_0/4
    \end{equation*}
    so $\delta_1(x) = \delta(x)$ and from there, we can follow the proof of the similar inclusion in Proposition \ref{prop_badmass_decay}.

    We can finally define $v \in LD(B(x_0,r_0) \setminus G)$ by
    \begin{equation*}
        v =
        \begin{cases}
            v_1 &\text{in} \ X_1 \setminus \bigcup_{i \in I(\rho)} 10 \overline{B}_i\\
            v_2 &\text{in} \ X_2 \setminus \bigcup_{i \in I(\rho)} 10 \overline{B}_i\\
            u   &\text{in} \ B(x_0,r_0) \setminus \left(L \cup X_1 \cup X_2 \cup \bigcup_{i \in I(\rho)} 10 \overline{B}_i\right)\\
            0   &\text{in} \ \bigcup_{i \in I(\rho)} 10 B_i.
        \end{cases}
    \end{equation*}
    It is clear that $v = u$ outside $B(x_0,9 r_0/10)$.
    The pair $(v,G)$ is a competitor of $(u,K)$ in $B(x_0,9r_0/10)$ so we can compare their energies and obtain
    \begin{multline*}
        \int_{B(x_0,r_0) \setminus K} \abs{e(u)}^2 \dd{x} + \HH^{N-1}(K \cap B(x_0,r_0)) \leq C \int_{B(x_0,r_0) \setminus K} \abs{e(u)}^2 \dd{x} + \HH^{N-1}(L \cap B(x_0,r_0))\\ + C \varepsilon_0 m(x_0,r_0) r_0^{N-1} + h(r_0) r_0^{N-1}.
    \end{multline*}
\end{proof}

\subsection{Decay of the energy}

{\color{black}
    Our last estimate deals with the decay of the normalized elastic energy.
    The decay of $\omega$ as a power is normally an elliptic regularity property (when $h = 0$, $u$ solves a elliptic PDE with a Neumann boundary condition on each side of $K$) which requires $K$ to be regular enough.
    Since we don't know the regularity of $K$ a priori and we have a gauge $h$ in the minimality condition, we do not get $\omega(x,r) \leq C r^\alpha$ at once. Instead, we show that $\omega$ decays when $\beta$ is small enough and when the error terms $\beta m$ and $h$ are small compared to $\omega$.
}

\newcommand{\epse}{\varepsilon_e}
\newcommand{\eeta}{a}
\begin{proposition}[Decay of the energy]\label{prop_energy_decay}
    Let $(u,K)$ be a Griffith almost-minimizer with gauge $h$ in $\Omega$.
    Let $x_0 \in K$, let $r_0 > 0$ be such that $B(x_0,r_0) \subset \Omega$, $h(r_0) \leq \eaf$ and $K$ separates $B(x_0,r_0)$.
    For all $0 < b \leq 1$, there exists $\epse > 0$ (depending on $N$, $b$) such that if
    \begin{equation*}
        \beta(x_0,r_0) \leq \epse \quad \text{and} \quad m(x_0,r_0)\beta(x_0,r_0) + h(r_0) \leq \epse \omega(x_0,r_0),
    \end{equation*}
    then
    \begin{equation*}
        \omega(x_0,b r_0)\leq C b \omega(x_0,r_0),
    \end{equation*}
    for some universal constant $C \geq 1$.
\end{proposition}

\newcommand{\CB}{C_b}
{\color{black}
    \begin{remark}\label{rmk_energy_decay}
        Note that if $\beta(x_0,r_0) \leq \epse$, either $m(x_0,r_0)\beta(x_0,r_0) + h(r_0) \leq \epse \omega(x_0,r_0)$ holds and we have $$\omega(x_0,b r_0)\leq C b \omega(x_0,r_0),$$ either it does not hold and we have
        $$\omega(x_0,r_0) \leq \epse^{-1}(m(x_0,r_0)\beta(x_0,r_0) + h(r_0)).$$
        In all cases, we have
        \begin{equation}\label{eq_energy_decay}
            \omega(x_0,b r_0) \leq C b \omega(x_0,r_0) + \CB [m(x_0,r_0)\beta(x_0,r_0) + h(r_0)],
        \end{equation}
        where $\CB \geq 1$ depends on $N$ and $b$.
        We will always use Proposition \ref{prop_energy_decay} via (\ref{eq_energy_decay}) in the rest of the paper.
    \end{remark}
}

{\color{black}
    As usual, the term $m(x_0,r_0) \beta(x_0,r_0)$ comes from the wall set of the extension Lemma \ref{lem_extension}.
    It would be simpler to use an extension Lemma that adds a wall $\ZZ$ of measure $\leq C \beta(x_0,r_0) r_0^{n-1}$ (as in \cite[Lemma 4.2]{bil}) but the resulting estimate would be $$\omega(x_0,b r_0) \leq C b \omega(x_0,r_0) + \CB [\beta(x_0,r_0) + h(r_0)]$$ instead of (\ref{eq_energy_decay}) and this would not be good enough to conclude that $\omega$ decays a power.
    One of the point of the bad mass is that once we know that $\beta$ stays small at all scales and location (i.e. that $K$ is Reifenberg-flat), then $m$ disappears and (\ref{eq_energy_decay}) gives the decay of $\omega$ in a quite straightforward way (provided that $h$ decays as a power).
}

\begin{proof}
    Throughout the proof, the letter $C$ is a universal constant $\geq 1$ whose value might change from one line to another.
    The case $1/2 \leq b \leq 1$ is trivial so we only focus on the case $0 < b < 1/2$.
    By scaling, it suffices to prove the result in $B := B(0,1)$.
    Then, the argument is by contradiction and compactness.
    We fix a parameter $b \in (0,1/2)$ and a constant $C_e \geq 1$.
    If the Lemma is false for this choice of constants, then there exists a sequence of Griffith almost-minimizers $(v_n,K_n)$ with gauge $h_n$ in $B$ such that $0 \in K_n$, $h_n(1) \leq \eaf$, the set $K_n$ separates $B(0,1)$, $\beta_n(0,1) \to 0$,
    \begin{equation*}
        \omega_n(0,1)^{-1}\left[\beta_n(0,1) m_n(0,1) + h_n(1)\right] \to 0,
    \end{equation*}
    and 
    \begin{equation*}
        \omega_n(0,b)> C_e b \omega_n(0,1),\label{ineq1}
    \end{equation*}
    where $\beta_n, \omega_n$, $m_n$ are the flatness, the normalized elastic energy and the bad mass defined with respect to $(v_n,K_n)$.
    {\color{black}
        Now, our goal is to find a contradiction for $C_e$ big enough, but not depending on $b$.
        We underline that the sets $K_n$ are all Ahlfors-regular in $B(0,1)$ with the same constant because we have $h_n(1) \leq \eaf$ uniformly in $n$.
    }

    We let $P_0$ be the vector subspace of $\R^N$ generated by the first $(N-1)$ vectors of the canonic base.
    Up to apply a sequence of rotations we may assume that $P_0$ achieves the minimium in the definition of $\beta_n(0,1)$.
    We decompose $\R^N = P_0 \times P_0^\perp$ and we use the notation $x = (x', x_N)$, where $x' \in P_0$ and $x_N \in \R$, for an element of $\R^N$.
    We normalize the energy by setting
    \begin{equation*}
        u_n(x):= \frac{1}{\sqrt{e_n}} v_n(x),
    \end{equation*}
    where
    \begin{equation*}
        e_n:= \int_{B \setminus K_n}|e(v_n)|^2 \dd{x}.
    \end{equation*}
    This is well defined because \eqref{ineq1} directly implies $e_n > 0$.
    In summary, for all $n$, we have $u_n \in LD(B \setminus K_n)$ and the following properties:
    \begin{equation}\label{eq_Kn_minimality}
        \begin{gathered}
            \text{for all competitor } (v,L) \text{ for }  (u_n,K_n) \text{ in } B(0,9/10) \text{ we have } \\
            \int_{B \setminus K_n} |e(u_n)|^2 \dd{x} + e_n^{-1} \mathcal{H}^{N-1}(K_n) \leq \int_{B \setminus K_n}|e(v)|^2 \dd{x} + e_n^{-1} \mathcal{H}^{N-1}(L) + e_n^{-1} h_n(1),
        \end{gathered}
    \end{equation}
    \begin{equation*}
        \int_{B \setminus K_n}|e(u_n)|^2 \dd{x} = 1,\label{energyBound}
    \end{equation*}
    \begin{equation*}
        0 \in K_n \ \text{and} \ \varepsilon_n := \max \Set{\sup_{x \in K_n \cap \overline{B}(0,1)} \mathrm{dist}(x,P_0), \sup_{x \in P_0 \cap \overline{B}(0,1)} \mathrm{dist}(x,K_n)} \to 0,\label{hausdorffD}
    \end{equation*}
    \begin{equation*}
        \int_{B(0,b)\setminus K_n}|e(u_n)|^2 \dd{x} \geq C_e b^N.\label{eq_contradiction}
    \end{equation*}
    We also write $m_n := m_n(0,1)$ to simplify the notations and we recall that
    \begin{equation*}
        e_n^{-1} (\varepsilon_n m_n + h_n(1)) \to 0.
    \end{equation*}

    The strategy of the proof is to be able, by some sort of $\Gamma$-convergence technique, to prove that $u_n$ admits a limit $u$ which is a minimizer in $B \setminus P_0$ with energy $1$ and for which an inequality of the type \eqref{eq_contradiction} is contradicted by elliptic regularity (if $C_e$ is too big).
    To get a contradiction with that approach, the main issue is to prove a strong convergence in $L^2$ for $e(u_n)$.
    This convergence is false in general, but here it follows from the minimality property (\ref{eq_Kn_minimality}) of $(u_n,K_n)$ and the fact that $K_n$ separates.

    {\color{black}
        At the beginning, our proof is similar to \cite[Theorem 8.19]{afp} but our last step is different and relies on the construction of an appropriate competitor inspired by \cite[Theorem 9]{l3}.
        What makes the construction of a competitor delicate is that the surface term is penalized in the energy comparison (\ref{eq_Kn_minimality}) by a coefficient $e_n^{-1}$ which might go to $+\infty$.
        For each $n$, we will build a competitor $v_n$ of $u_n$ which adds a wall set of size $\beta_n m_n$ to the crack and this contribution will disappear in the limit thanks to the assumption $e_n^{-1} \beta_n m_n \to 0$.
    }

    In the sequel will shall need to extract several subsequences of $u_n$, that we will still denote by $u_n$, for simplicity.

    \vspace{0.5cm}
    \noindent\emph{Step 1. Convergence locally in $B \setminus P_0$.}
    We first extract a subsequence such that $e(u_n)$ converges weakly in $L^2$ to some $e\in L^2(B;\mathbb M^{N \times N})$, thanks to the energy bound in \eqref{energyBound}.
    We next show that $e$ is the symmetric gradient of some displacement. To this aim, we fix $0<\delta<1/10$ and we introduce the Lipschitz domain
    \begin{equation*}
        A_\delta:= \set{x \in B | \mathrm{dist}(x,P_0) > \delta} = A^+_\delta \cup A^-_{\delta},
    \end{equation*}
    where $A^\pm_\delta$ are the connected components of $A_\delta$. Note that for such $\delta$, $D^\pm := B\left((0,\pm \frac{1}{2}) ,\frac{1}{4}\right) \subset A^\pm_\delta$ and $K_n \cap A_\delta = \emptyset$ for $n$ large enough (depending on $\delta$). Denoting by 
    $$R_n^\pm(x):=\frac{1}{|D^\pm|}\int_{D^\pm} u_n(y) \,dy +\left(\frac{1}{|D^\pm|}\int_{D^\pm} \frac{\nabla u_n(y)-\nabla u_n(y)^{T}}{2}dy\right)\left(x-\frac{1}{|D^\pm|}\int_{D^\pm} y \,dy\right),$$
    the rigid motion associated to $u_n$ in $D^\pm$, by virtue of the Poincar\'e-Korn inequality \cite[Theorem 5.2]{AMR}, we get that 
    \begin{equation*}
        \|u_n-R^\pm_n\|_{H^1(A^\pm_\delta;\R^N)}\leq c \| e(u_n)\|_{L^2(A^\pm_\delta;\mathbb M^{N\times N})},
    \end{equation*}
    for some constant $c>0$ that does not depend on $\delta$ since $0 < \delta < 1/10$.

    Thanks to a diagonalisation argument, for a further subsequence (not relabeled), we obtain a function $u \in H^1(B \setminus P_0;\R^N)$ such that $(u_n - R^\pm_n)_n$ converges weakly to $u$ in $H^1(A^\pm_\delta;\R^N)$ and strongly in $L^2(A^\pm_\delta;\R^N)$, for any $0<\delta<1/10$.
    In particular, we must have $e = e(u)$. This shows that
    \begin{equation*}
        e(u_n) \rightharpoonup e(u) \quad \text{ weakly in } L^2(B;\mathbb M^{N \times N}),
    \end{equation*}
    and
    \begin{equation}\label{eq_semicontinuity}
        \int_{B \setminus P_0} |e(u)|^2 \dd{x} \leq \liminf_{n \to +\infty} \int_{B \setminus K_n} |e(u_n)|^2 \dd{x} = 1.
    \end{equation}

    \vspace{0.5cm}
    \noindent\emph{Step 2. The limit is a minimizer.} At this stage, we are able to  prove that  the limit $u$ is an energy minimizer in $B \setminus P_0$, thus is the weak solution of an elliptic system with Neumann boundary conditions on $P_0$.
    For that purpose, we will use a jump transfer argument, relying on the fact that $K_n$ separates.
    We consider a test function $\varphi \in H^1(B \setminus P_0;\R^N)$ such that $\varphi = 0$ on $B \setminus B(0,9/10)$.
    We denote by $C_n^\pm$ the connected components of $B \setminus K_n$ that contains the point $(0,\pm 1/2)$ and we define a function $\varphi_n \in H^1(B \setminus K_n;\R^N)$ by
    \begin{equation*}
        \varphi_n(x) :=
        \begin{cases}
            \varphi(x',\abs{x_N})  & \ \text{in} \ C_n^+\\
            \varphi(x',-\abs{x_N}) & \ \text{in} \ C_n^-\\
            0                      & \ \text{otherwise}.
        \end{cases}
    \end{equation*}
    Then, one can check that $\varphi_n=0$ on $B \setminus B(0,9/10)$ and that $\varphi_n(x) = \varphi(x)$ for $x \in B$ such that $\abs{y} \geq \varepsilon_n$.
    By \eqref{hausdorffD}, $\varepsilon_n \to 0$ so $\varphi_n \to \varphi$ strongly in $L^2(B;\R^N)$ and $e(\varphi_n) \to e(\varphi)$ strongly in $L^2(B;\mathbb M^{N \times N})$.
    Therefore, using the minimality property (\ref{eq_Kn_minimality}) to compare $(u_n,K_n)$ and $(u_n + \varphi_n,K_n)$ and we obtain that
    \begin{equation*}
        \int_{B \setminus K_n} \abs{e(u_n)}^2 \dd{x} \leq \int_{B \setminus K_n} \abs{e(u_n+\varphi_n)}^2 \dd{x} + e_n^{-1} h_n(1), 
    \end{equation*}
    which implies, by expanding the squares, that
    \begin{equation*}
        0 \leq 2 \int_{B \setminus K_n} e(u_n):e(\varphi_n) \dd{x} +  \int_{B \setminus K_n} \abs{e(\varphi_n)}^2 \dd{x} + e_n^{-1} h_n(1).
    \end{equation*}
    Using that $e_n^{-1} h_n(1) \to 0$, that $e(\varphi_n) \to e(\varphi)$ strongly in $L^2(B;\mathbb M^{N \times N})$ and $e(u_n) \rightharpoonup e(u)$ weakly in $L^2(B;\mathbb M^{N \times N})$, we can pass to the limit $n \to +\infty$ and deduce
    \begin{equation*}
        0 \leq 2 \int_{B \setminus P_0} e(u):e(\varphi) \dd{x} + \int_{B \setminus P_0} \abs{e(\varphi)}^2 \dd{x},
    \end{equation*}
    or still
    \begin{equation*}
        \int_{B \setminus P_0} \abs{e(u)}^2 \dd{x} \leq \int_{B \setminus P_0} \abs{e(u+\varphi)}^2 \dd{x}.
    \end{equation*}
    We conclude that $u$ is a weak solution of $\mathrm{div}(e(u)) = 0$ in $B(0,9/10) \setminus P_0$ with a Neumann condition on $P_0$.
    It follows by elliptic regularity and (\ref{eq_semicontinuity}) that
    \begin{equation}\label{eq_elliptic}
        \int_{B(0,b) \setminus P_0} \abs{e(u)}^2 \leq C b^n \int_{B \setminus P_0} \abs{e(u)}^2,
    \end{equation}
    for some universal constant $C \geq 1$.

    \vspace{0.5cm}
    \noindent\emph{Step 3. Convergence of the $L^2$ norms and conclusion.}
    To arrive at a contradiction between (\ref{eq_elliptic}) and (\ref{eq_contradiction}), we show that
    \begin{equation}\label{eq_energy_decay_goal}
        \lim_{n \to +\infty} \int_{B(0,b) \setminus K_n} \abs{e(u_n)}^2 = \int_{B(0,b) \setminus P_0} \abs{e(u)}^2 \dd{x}.
    \end{equation}
    For that purpose, we take any subsequence such that the left-hand side of (\ref{eq_energy_decay_goal}) converges and we consider the sequence of measures
    \begin{equation*}
        \mu_n:= |e(u_n)|^2 \mathcal{L}^N.
    \end{equation*}
    By \eqref{energyBound}, we can bound uniformly $\mu_n(B) \leq 1$ and thus we can extract a subsequence such that $\mu_n \overset{\ast}{\rightharpoonup} \mu$ in $B$, for some measure $\mu$.
    The rest of the proof is devoted to showing
    \begin{equation*}
        \mu = \abs{e(u)}^2 \mathcal{L}^N \quad \text{in $B(0,1/2)$},
    \end{equation*}
    which implies (\ref{eq_energy_decay_goal}).

    We start by investigating the structure of $\mu$ away from $P_0$.
    We claim that, up to use again a diagonal subsequence, we can assume that $(e(u_n))_n$ converges to $e(u)$ in $L^2_{\mathrm{loc}}(B \setminus P_0;\mathbb M^{N \times N})$.
    Indeed, let us take any ball $\overline{B}(x,r) \subset B \setminus P_0$.
    As $e(u_n)_n \rightharpoonup e(u)$ weakly in $L^2(B(x,r))$, the strong convergence in $L^2(B(x,r))$ will follow from the convergence of the $L^2$ norms.
    More specifically, it will follow from the fact that
    \begin{equation*}
        \limsup_{n \to +\infty} \int_{B(x,r)} |e(u_n)|^2 \dd{x} \leq \int_{B(x,r)} |e(u)|^2 \dd{x},
    \end{equation*}
    since the reverse inequality with a $\liminf$ is already a consequence of the weak convergence.

    We consider a small $\delta > 0$ such that $\overline{B}(x,r+\delta) \subset B \setminus P_0$ and a cut-off function $\varphi \in C^\infty_c(B(x,r+\delta))$ such that $0 \leq \varphi \leq 1$ and $\varphi = 1$ on $B(x,r)$.
    We also work with $n$ big enough so that $\overline{B}(x,r+\delta) \subset B \setminus K_n$.
    We compare $(u_n,K_n)$ with the competitor $(v_n,K_n)$, where
    \begin{equation*}
        v_n := \varphi (u + R_n^\pm) + (1 - \varphi) u_n.
    \end{equation*}
    We estimate
    \begin{equation*}
        \abs{e(v_n)} \leq \varphi \abs{e(u)} + (1 - \varphi) \abs{e(u_n)} + \abs{\nabla \varphi} \abs{u_n - R_n^\pm -u}
    \end{equation*}
    and then using the elementary inequality
    $$(a+b)^2\leq (1+\varepsilon)a^2+\left(1 + \varepsilon^{-1}\right)b^2 \quad \text{for all $\varepsilon > 0$}$$
    and the convexity of $t \mapsto t^2$, we have
    \begin{align*}
        \int_{B(x,r+\delta)} |e(v_n)|^2 \dd{x} &\leq \begin{multlined}[t] (1+\varepsilon) \int_{B(x,r+\delta)} \left(\varphi \abs{e(u)} + (1-\varphi) \abs{e(u_n)}\right)^2 \dd{x} \\+ \left(1 + \varepsilon^{-1}\right) \int_{B(x,r+\delta)} |\nabla \varphi|^2 \abs{u_n - R_n^\pm - u}^2 \dd{x}
        \end{multlined}\\
                                               &\leq \begin{multlined}[t] (1+\varepsilon)\int_{B(x,r+\delta)}\varphi |e(u)|^2 +(1-\varphi)|e(u_n)|^2 \dd{x} \\+ \left(1 + \varepsilon^{-1}\right)\int_{B(x,r+\delta)} |\nabla \varphi|^2 \abs{u_n- R_n^\pm - u}^2 \dd{x}.
                                               \end{multlined}
        \end{align*}
        We use almost-minimality property (\ref{eq_Kn_minimality}) of $(u_n,K_n)$ to compare the energy of $u_n$ and $v_n$,
        \begin{equation*}
            \int_{B(x,r+\delta)} \abs{e(u_n)}^2 \dd{x} \leq \int_{B(x,r+\delta)} \abs{e(v_n)}^2 + e_n^{-1} h_n(1)
        \end{equation*}
        but using the previous estimate, this gives
        \begin{multline*}
            \int_{B(x,r+\delta)} \varphi \abs{e(u_n)}^2 \dd{x} \leq (1+\varepsilon) \int_{B(x,r+\delta)} \varphi \abs{e(u)}^2 \dd{x} + \varepsilon \int_B \abs{e(u_n)}^2 \\+ \left(1 + \varepsilon^{-1}\right) \int_B |\nabla \varphi|^2 \abs{u_n- R_n^\pm - u}^2 \dd{x} + e_n^{-1} h_n(1)
        \end{multline*}
        and then by definition of $\varphi$,
        \begin{multline*}
            \int_{B(x,r)} \abs{e(u_n)}^2 \leq (1+\varepsilon) \int_{B(x,r+\delta)} \abs{e(u)}^2 \dd{x} + \varepsilon \int_B \abs{e(u_n)}^2 \dd{x} \\+ \left(1 + \varepsilon^{-1}\right) \int_{B(x,r+\delta)} |\nabla \varphi|^2 \abs{u_n- R_n^\pm - u}^2 \dd{x} + e_n^{-1} h_n(1).
        \end{multline*}
        We recall that $\int_B \abs{e(u_n)}^2 \dd{x} \leq 1$ and that $u_n - R_n^\pm \to u$ strongly in $L^2(B(x,r+\delta))$ so passing to the limit $n \to +\infty$ gives
        \begin{equation*}
            \limsup_{n \to +\infty} \int_{B(x,r)} \abs{e(u_n)}^2 \dd{x} \leq (1 + \varepsilon) \int_{B(x,r+\delta)} \abs{e(u)}^2 \dd{x} + \varepsilon
        \end{equation*}
        but since $\varepsilon > 0$ and $\delta > 0$, are arbitrary, we conclude that
        \begin{equation*}
            \limsup_{n\to +\infty} \int_{B(x,r)} |e(u_n)|^2 \dd{x} \leq \int_{B(x,r)} |e(u)|^2 \dd{x},
        \end{equation*}
        as claimed.

        At this point, we have proved that $(e(u_n))_n$ converges $e(u)$ in $L^2_{\mathrm{loc}}(B \setminus P_0)$ and therefore
        \begin{equation*}
            \mu \mres \left(B \setminus P_0\right) =|e(u)|^2 \mathcal{L}^N.
        \end{equation*}

        We now show that
        \begin{equation*}
            \mu\left(P_0\cap B\left(0,\tfrac{1}{2}\right)\right) = 0.
        \end{equation*}
        Some of the following quantities will depend over $n$ but we will sometimes not specify this dependence to lighten the notation. By use of Lemma \ref{lem_goodradius}, we select a radius $\rho_n \in [1/2, 3/4]$ such that
        \begin{equation*}
            \sum_{i \in I(\rho_n)} r_i^{N-1} \leq C_0 \varepsilon_n m_n
        \end{equation*}
        for some constant universal constant $C_0 \geq 1$ and where
        \begin{equation*}
            I(\rho_n) = \set {i \in I | 10 B_i \cap \partial B(0,\rho_n) \ne \emptyset}.
        \end{equation*}
        We can extract a subsequence so that $(\rho_n)_n$ converges to some $\rho_\infty \in [1/2, 3/4]$.
        We would also like that $\mu(\partial B(0,\rho_\infty)) = 0$ but we cannot guarantee this property directly.
        We need to refine our procedure.
        Let us fix a small $\varepsilon > 0$ and let $S$ be the finite set of radii $\rho \in [1/2, 3/4]$ such that $\mu(\partial B(0,\rho)) \geq \varepsilon$.
        According to the proof of Lemma \ref{lem_goodradius}, there exists a universal constant $C_0 \geq 1$ such that
        \begin{equation*}
            \int_{1/2}^{3/4}  \sum_{i \in I(t)} r_i^{N-1} \dd{t} \leq C_0 \varepsilon_n m_n
        \end{equation*}
        so for any $C \geq 1$, the set
        \begin{equation}\label{eq_badset1}
            \Set{t \in \left[\tfrac{1}{2}, \tfrac{3}{4}\right] | \sum_{i \in I(t)} r_i^{N-1} \dd{t} \geq C \varepsilon_n m_n}
        \end{equation}
        has $\LL^1$ measure $\leq C^{-1} C_0$.
        We take $C$ big enough (depending on $N$) such that $C^{-1} C_0 < 1/8$.
        Since $S$ is finite, we can also consider a small radius $r > 0$ (which does not depend on $n$) such that the set
        \begin{equation}\label{eq_badset2}
            \Set{t \in \left[\tfrac{1}{2},\tfrac{3}{4}\right] | \mathrm{dist}(t,S) < r}
        \end{equation}
        has $\LL^1$ measure $< 1/8$.
        In this way, it is clear that the two bad sets (\ref{eq_badset1}) and (\ref{eq_badset2}) cannot cover $[1/2, 3/4]$ so we can find $\rho_n \in [1/2, 3/4]$ such that $\mathrm{dist}(\rho_n,S) \geq r$ and
        \begin{equation}\label{eq_energy_radius}
            \sum_{i \in I(\rho_n)} r_i^{N-1} \dd{t} \leq C \varepsilon_n m_n.
        \end{equation}
        We can extract a subsequence so that $(\rho_n)_n$ converges to some $\rho_\infty \in [1/2, 3/4]$ and we have moreover, $\mathrm{dist}(\rho_\infty, S) \geq r$.
        In particular, $\rho_\infty \notin S$ so
        \begin{equation}\label{eq_partial_rho}
            \mu(\partial B(0,\rho_\infty)) \leq \varepsilon.
        \end{equation}
        It will suffice to make $\varepsilon \to 0$ at the end of the proof.

        Now we fix $n$ and we write $\rho = \rho_n$, $K = K_n$ to lighten the notation.
        We introduce
        \begin{equation*}
            G :=  K \cup \bigcup_{i \in I(\rho)} \partial (10 B_i).
        \end{equation*}
        We observe that $G$ is relatively closed in $\Omega$ and that
        \begin{equation*}
            G \setminus B\left(0, 9/10\right) = K \setminus B\left(0,9/10\right).
        \end{equation*}
        We fix some $0 < \eeta \leq 1/10$ and we consider the function $f : [0,+\infty[ \to [0,+\infty)$ defined by
        \begin{equation*}
            f(t) =
            \begin{cases}
                \eeta    & \ \text{for} \ t \leq \rho - \eeta\\
                \rho - t   & \ \text{for} \ \rho - \eeta \leq t \leq \rho\\
                0       & \ \text{for} \ t \geq \rho.
            \end{cases}
        \end{equation*}
        We will work with the geometric function $\delta_1 : K \cap \overline{B}(0,3/4) \to [0,1/4]$ defined by
        \begin{equation}\label{eq_delta2}
            \delta_1(x) := \max{\set{f(\abs{x}), \delta(x)}}.
        \end{equation}
        We recall that according to Lemma \ref{lem_delta}, we have $\delta(x) \leq \max_k (10 r_k) \leq 10 C \varepsilon_n$, whereas $f(\abs{x}) \leq \eeta$ by construction. We assume $n$ big enough such that $10 C \varepsilon_n \leq \eeta$ and in particular, $\delta_1(x) \leq \eeta$, with equality at points $x \in K \cap \overline{B}(0,\rho - \eeta)$.

        We denote by $\Omega_1$, $\Omega_2$ the two connected components of $B(0,1) \setminus K$ that contains respective components of
        \begin{equation*}
            \Set{x \in B(0,1) | \mathrm{dist}(x,P_0) > 1/2}.
        \end{equation*}
        We apply Lemma \ref{lem_extension} with respect to the geometric function $\delta_1$ defined in (\ref{eq_delta2}).
        We obtain functions $v_h \in LD_{\mathrm{loc}}(V_h)$ (for $h=1,2$) and a relatively closed subset $S_h$ of $V_h = \Omega_h \cup W$ such that
        \begin{equation*}
            \W \subset S_h \subset \WW,\qquad
            v_h = u \ \text{in} \ V_h \setminus S_h
        \end{equation*}
        and, according to Remark \ref{rmk_vh},
        \begin{equation}\label{eq_rmk_vh}
            \sum_{h=1,2} \int_{S_h \setminus \ZZ} \abs{e(v_h)}^2 \dd{x} \leq C \int_{D} \abs{e(u_n)}^2 \dd{x},
        \end{equation}
        where
        \begin{equation*}
            D := B(0,\rho) \cap \bigcup \set{y \in B(x, 50 \delta_1(x) / U) | x \in K \cap \overline{B}(0,\rho),\ \mathrm{dist}(y,K) \geq \delta_1(x)/U}.
        \end{equation*}
        We also recall that $U = 10^5$, that
        \begin{align*}
            \W      &= \bigcup \set{B(x,\delta_1(x)/U) | x \in K \cap \overline{B}(0,\rho)}\\
            \WW     &= \bigcup \set{B(x,10\delta_1(x)/U) | x \in K \cap \overline{B}(0,\rho)}
        \end{align*}
        and
        \begin{equation*}
            \ZZ := \bigcup \set{B(x,10 \delta_1(x)/U) | x \in K \cap \overline{B}(0,\rho),\ B(x,50 \delta_1(x)/U) \cap \partial B(0,\rho) \ne \emptyset}.
        \end{equation*}
        Using the inclusion $S_h \subset \WW$ and the definition of $\ZZ$, we see that
        \begin{equation*}
            S_h \setminus B(x_0,\rho) \subset \ZZ
        \end{equation*}
        Next, let us check that
        \begin{equation*}
            \ZZ \subset \bigcup_{i \in I(\rho)} 10 B_i.
        \end{equation*}
        It is easy to see that for all $0 \leq t \leq \rho$, we have $t + f(t) \leq \rho$ so for $x \in K \cap \overline{B}(0,\rho)$, we have $B(x,50 f(\abs{x})/U) \subset B(0,\rho)$.
        In the case where $B(x,50 \delta_1(x)/U) \cap \partial B(0,\rho) \ne \emptyset$, we have necessarily $\delta_1(x) = \delta(x)$ and from there, we can follow the proof of Proposition \ref{prop_badmass_decay}.

        In order to use (\ref{eq_rmk_vh}), we will to replace the sets $S_h \setminus \ZZ$ and $D$ by simpler domains which do not depend on $n$.
        We introduce
        \begin{equation*}
            C(\eeta) := \set{y \in \R^N | \mathrm{dist}(y, P_0 \cap \overline{B}(0,\rho - 2 \eeta)) < \eeta/(2U)}
        \end{equation*}
        and we justify that whenever $n$ is big enough such that $\varepsilon_n < \eeta / (2 U)$, we have
        \begin{equation}\label{eq_rmk_vh1}
            C(\eeta) \subset B(0,\rho) \cap W
        \end{equation}
        and
        \begin{equation}\label{eq_rmk_vh2}
            D \subset \set{y \in \overline{B}(0,\rho) | \mathrm{dist}(y,P_0) \leq \eeta} \setminus C(\eeta).
        \end{equation}
        Let's start with (\ref{eq_rmk_vh1}).
        It is already clear that $C(\eeta) \subset B(0,\rho)$.
        For $y \in C(\eeta)$, there exists $x_0 \in P_0 \cap \overline{B}(0,\rho - 2 \eeta)$ such that $\abs{x_0 - y} < \eeta / (2U)$.
        Since $\varepsilon_n < \eeta / (2U)$, there also exists $x \in K$ such that $\abs{x_0 - x} < \eeta / (2U)$.
        Notice that $x \in K \cap \overline{B}(0,\rho - \eeta)$ so $\delta_1(x) = \eeta$.
        Then it follows by triangular inequality that $y \in B(x, \delta_1(x)/U)$ and thus $y \in \W$.
        We pass to (\ref{eq_rmk_vh2}).
        For $y \in D$, there exists $x \in K \cap \overline{B}(0,\rho)$ such that $y \in B(x, 50 \delta_1(x)/U)$ and $\mathrm{dist}(y,K) \geq \delta_1(x) / U$.
        As $\abs{y - x} \leq 50 \eeta/U$ and $\mathrm{dist}(x,P_0) \leq \varepsilon_n$, we have
        \begin{equation*}
            \mathrm{dist}(y,P_0) \leq 50 \eeta/U + \varepsilon_n \leq \eeta.
        \end{equation*}
        We check that $y \notin C(\eeta)$ by contradiction. We assume that there exists $x_0 \in P_0 \cap \overline{B}(0,\rho - 2 \eeta)$ such that $\abs{y - x_0} < \eeta / (2 U)$. As $\mathrm{dist}(x_0,K) \leq \varepsilon_n$, we have
        \begin{equation*}
            \mathrm{dist}(y,K) < \eeta / (2 U) + \varepsilon_n < \eeta / U.
        \end{equation*}
        Recalling that $\abs{x - y} \leq 50 \eeta/U$, we also have $x \in \overline{B}(0,\rho - \eeta)$ by the triangular inequality and thus $\delta_1(x) = \eeta$. But then, our initial assumption on $y$ means $\mathrm{dist}(y,K) \geq \eeta / U$. Contradiction !

        We make a last observation before defining our competitor $v$.
        According to Lemma \ref{lem_W} (applied with the geometric function $\delta_1$), we have
        \begin{equation}\label{eq_W_inclusion}
            \overline{B}(0,\rho) \setminus \left(K \cup \Omega_1 \cup \Omega_2\right) \subset W
        \end{equation}
        Here, observe that $B(0,1) \setminus (K \cup \Omega_1 \cup \Omega_2)$ is an open set which is the union of the connected component of $B(0,1) \setminus K$ that are neither $\Omega_1$, neither $\Omega_2$.
        The inclusion (\ref{eq_W_inclusion}) says that $W$ cover them in $\overline{B}(0,\rho)$ and this is related to the fact seen in Lemma \ref{lem_delta} that for all $x \in K \cap \overline{B}(0,\rho)$, for all $r \in (\delta(x),1/4]$, the flatness $\beta_K(x,t)$ is very small (less than some universal constant).

        It follows from (\ref{eq_W_inclusion}) that
        \begin{equation*}
            \partial B(0,\rho) \setminus \left(K \cup \Omega_1 \cup \Omega_2\right) \subset \ZZ \subset \bigcup_{i \in I(\rho)} 10 B_i.
        \end{equation*}
        The set
        \begin{equation*}
            B(0,1) \setminus \left(K \cup \Omega_1 \cup \Omega_2 \cup \bigcup_{i \in I(\rho)} 10 \overline{B}_i\right)
        \end{equation*}
        is open because $B(0,1) \setminus \left(K \cup \Omega_1 \cup \Omega_2\right)$ is open and because the topological boundary of the union $\bigcup_{i \in I(\rho)} 10 \overline{B}_i$ lies in $K \cap \partial B(0,\rho)$, as we have seen in earlier proofs.
        And according to (\ref{eq_W_inclusion}), its connected components are either contained in $B(0,\rho)$, or contained in $B(0,1) \setminus \overline{B}(0,\rho)$.
        We will set our competitor to be $v = 0$ in the first case and $v = u_n$ in the second case.

        We finally define $v \in LD(B(0,1) \setminus G)$ by
        \begin{equation*}
            v =
            \begin{cases}
                v_1 &\text{in } \ \Omega_1 \setminus \bigcup_{i \in I(\rho)} 10 \overline{B}_i\\
                v_2 &\text{in } \ \Omega_2 \setminus \bigcup_{i \in I(\rho)} 10 \overline{B}_i\\
                u_n   &\text{in } \ B(0,1) \setminus \left(K \cup \Omega_1 \cup \Omega_2 \cup \overline{B}(0,\rho) \cup \bigcup_{i \in I(\rho)} 10 \overline{B}_i\right)\\
                0   &\text{in } \ B(0,\rho) \setminus \left(K \cup \Omega_1 \cup \Omega_2 \cup \bigcup_{i \in I(\rho)} 10 \overline{B}_i\right)\\
                0   &\text{in } \ \bigcup_{i \in I(\rho)} 10 B_i.
            \end{cases}
        \end{equation*}
        Note that each domain in the piecewise definition is open.
        The pair $(v,G)$ is a competitor of $(u_n,K)$ in $B(0,9/10)$ so we can compare their energies using (\ref{eq_Kn_minimality}).
        We recall that by (\ref{eq_energy_radius}), the spheres $\bigcup_{i \in I(\rho)} \partial (10 B_i)$ add to the crack a contribution bounded by
        \begin{equation*}
            \sum_{i \in I(\rho)} r_i \leq C \varepsilon_n m_n
        \end{equation*}
        so
        \begin{equation}\label{eq_energy_comparison}
            \int_{B(0,1)} \abs{e(u_n)}^2 \dd{x} \leq \int_{B(0,1)} \abs{e(v)}^2 \dd{x} + C e_n^{-1} (\varepsilon_n m_n + h_n(1)).
        \end{equation}

        We observe that for $x \in B(0,1) \setminus \left(G \cup S_1 \cup S_2\right)$, we have either $v = 0$ or $v = u_n$ in a neighborhood of $x$ (check this for each domain in the definition of $v$) and in both cases
        \begin{equation}\label{eq_energy_simplify}
            \abs{e(v)} \leq \abs{e(u_n)}.
        \end{equation}
        The inequality (\ref{eq_energy_simplify}) also holds true for points $x \in B(0,1) \setminus (G \cup B(0,\rho))$ since either $x \notin S_1 \cup S_2$ and we are back to the previous case, or $x \in (S_1 \cup S_2) \setminus B(0,\rho) \subset \ZZ \subset \bigcup_i 10 B_i$, where $v = 0$.
        In summary, (\ref{eq_energy_simplify}) holds true for
        \begin{equation*}
            x \in B(0,1) \setminus \left[G \cup \left(B(0,\rho) \cap (S_1 \cup S_2)\right)\right]
        \end{equation*}
        so the energy comparison (\ref{eq_energy_comparison}) simplifies to
        \begin{equation*}
            \int_{B(0,\rho) \cap (S_1 \cup S_2)} \abs{e(u_n)}^2 \leq \int_{B(0,\rho) \cap (S_1 \cup S_2)} \abs{e(v)}^2 + C e_n^{-1} (\varepsilon_n m_n + h_n(1)).
        \end{equation*}
        For the points $x \in B(0,\rho) \cap (S_1 \cup S_2) \setminus G$, we observe furthermore that either $x \in \bigcup_i 10 B_i$, or $x \in B(0,\rho) \setminus (K \cup \Omega_1 \cup \Omega_2 \cup \bigcup_i 10 \overline{B}_i)$ or $x \in \bigcup_{h = 1,2} \Omega_h \cap S_h \setminus \bigcup_i 10 B_i$.
        Here, we have used the fact that $\Omega_1 \cap \Omega_2 = \emptyset$ and $W \subset S_h \subset \Omega_h \cup W$ to deduce $(S_1 \cup S_2) \cap \Omega_1 \subset S_1 \cap \Omega_1$ (and the same with the indices reversed).
        In the first two cases, we have $v(x) = 0$ in a neighborhood of $x$ so we can also simplify the right-hand side as
        \begin{equation*}
            \int_{B(0,\rho) \cap (S_1 \cup S_2)} \abs{e(u_n)}^2 \leq \sum_{h=1,2} \int_{\Omega_h \cap S_h \setminus \bigcup 10 B_i} \abs{e(v_h)}^2 + C e_n^{-1} (\varepsilon_n m_n + h_n(1)).
        \end{equation*}
        To simplify the notations in the integral sign, we have written $\bigcup 10 B_i$ for $\bigcup_{i \in I(\rho)} 10 B_i$.
        We use the inclusion $\ZZ \subset \bigcup_{i \in I(\rho)} 10 B_i$ and (\ref{eq_rmk_vh}) to bound
        \begin{equation*}
            \sum_{h=1,2} \int_{\Omega_h \cap S_h \setminus \bigcup 10 B_i}  \abs{e(v_h)}^2 \leq \sum_{h=1,2} \int_{S_h \setminus \ZZ} \abs{e(v_h)}^2 \leq C \int_{D} \abs{e(u_n)}^2
        \end{equation*}
        so we arrive at
        \begin{equation*}
            \int_{B(0,\rho) \cap (S_1 \cup S_2)} \abs{e(u_n)}^2 \leq C \int_{D} \abs{e(u_n)}^2 + C e_n^{-1} (\varepsilon_n m_n + h_n(1)).
        \end{equation*}

        Then we come back to the notation $\rho = \rho_n$ and we set
        \begin{align*}
            C_n(\eeta) &:= \set{y \in \R^N | \mathrm{dist}(y, P_0 \cap \overline{B}(0,\rho_n - 2 \eeta)) < \eeta/(4U)}\\
            C_\infty(\eeta) &:= \set{y \in \R^N | \mathrm{dist}(y, P_0 \cap \overline{B}(0,\rho_\infty - 2 \eeta)) < \eeta/(4U)}.
        \end{align*}
        We rely on the inclusions (\ref{eq_rmk_vh1}) and (\ref{eq_rmk_vh2}) to simplify the domains of integration;
        \begin{equation*}
            \int_{C_n(\eeta)} \abs{e(u_n)}^2 \dd{x} \leq C \int_{\overline{B}(0,\rho_n) \cap \set{\mathrm{dist}(\cdot,P_0) \leq \eeta} \setminus C_n(\eeta)} \abs{e(u_n)}^2 \dd{x} + C e_n^{-1} (\varepsilon_n m_n + h_n(1)).
        \end{equation*}
        We let $n \to +\infty$ and get
        \begin{equation*}
            \mu\left(C_\infty(\eeta)\right) \leq C \mu\left(\overline{B}(0,\rho_\infty) \cap \set{\mathrm{dist}(\cdot, P_0) \leq \eeta} \setminus C_\infty(\eeta)\right).
        \end{equation*}
        This yields in particular
        \begin{equation*}
            \mu\left(P_0 \cap B(0,\rho_\infty - 2 \eeta)\right) \leq C \mu\left(\overline{B}(0,\rho_\infty) \cap \set{\mathrm{dist}(\cdot,P_0) \leq \eeta} \setminus P_0 \cap B(0,\rho_\infty - 2 \eeta)\right)
        \end{equation*}
        and we finally make $\eeta \to 0$ to get
        \begin{equation*}
            \mu\left(P_0 \cap B(0,\rho_\infty)\right) \leq C \mu\left(P_0 \cap \partial B(0,\rho_\infty)\right).
        \end{equation*}
        According to the way we built $\rho_\infty$ (see (\ref{eq_partial_rho})), this implies $\mu\left(P_0 \cap B(0,\rho_\infty)\right) \leq \varepsilon$.
        Here, $\rho_\infty$ might depend on $\varepsilon$ but since $\rho_\infty \geq 1/2$ anyway, we conclude that
        \begin{equation*}
            \mu\left(P_0 \cap B\left(0,\tfrac{1}{2}\right)\right) = 0.
        \end{equation*}
    \end{proof}


    \section{Joint decay and conclusion}


    {\color{black}
        We show in Lemma \ref{lem_decay} that when all the quantities are small, they stay small at smaller scales. We deduce in Proposition \ref{prop_main} that the bad mass is actually zero and that $K$ is an almost-minimal set with a gauge of optimal exponent $\alpha$. Then, our $\varepsilon$-regularity theorem will follow from the regularity theory of almost-minimal sets.
    }

    \begin{lemma}\label{lem_decay}
        Let $(u,K)$ be a Griffith almost-minimizer with gauge $h$ in $\Omega$.
        Let $x_0 \in K$, $r_0 > 0$ be such that $B(x_0,r_0) \subset \Omega$ and $K$ separates $B(x_0,r_0)$.
        For all $\varepsilon_0 > 0$, there exists $\varepsilon \in (0,\varepsilon_0)$ (that depends on $N$, $\varepsilon_0$) such that if
        \begin{equation*}
            \beta(x_0,r_0) + \omega(x_0,r_0) + m(x_0,r_0) + h(r_0) \leq \varepsilon,
        \end{equation*}
        then for all $0 < r \leq r_0$, $K$ separates $B(x_0,r)$ and
        \begin{equation*}
            \beta(x_0,r) + \omega(x_0,r_0) + m(x_0,r_0) + h(r_0) \leq \varepsilon_0.
        \end{equation*}
    \end{lemma}

    \begin{proof} 
        We use the letter $C$ as a generic constant $\geq 1$ that depends on $N$.
        We let $b \in (0,1)$, this constant will be fixed later and will depend only on $N$.
        We let $\varepsilon_0 \in ]0,1/2]$ and $\varepsilon_1$, $\varepsilon_2$, $\varepsilon_3 \in ]0,\varepsilon_0]$.
        They will will be chosen small enough depending on $N$ and $b$ but each $\varepsilon_i$ will also depend on the $\varepsilon_j$ for $j < i$.

        Our goal is to prove that the conditions
        \begin{equation}\label{eq_iterate0}
            \begin{gathered}
                \text{$K$ separates $B(x_0,r_0)$ and}\\
                \beta(x_0,r_0) \leq \varepsilon_0, \quad m(x_0,r_0) \leq \varepsilon_1,\quad  \omega(x_0,r_0) \leq \varepsilon_2, \quad h(r_0) \leq \varepsilon_3
            \end{gathered}
        \end{equation}
        imply that for all $0 < r \leq r_0$,
        \begin{equation*}
            \begin{gathered}
                \text{$K$ separates $B(x_0,r)$ and}\\
                \beta(x_0,r) \leq C \varepsilon_0, \quad m(x_0,r) \leq C \varepsilon_1, \quad \omega(x_0, r) \leq C \varepsilon_2, \quad h(r) \leq \varepsilon_3.
            \end{gathered}
        \end{equation*}
        One can deduce the Lemma's statement.

        We assume (\ref{eq_iterate0}) and we start by showing that
        \begin{equation*}
            \begin{gathered}
                \text{$K$ separates $B(x_0,b r_0)$ and}\\
                \beta(x_0,b r_0) \leq \varepsilon_0, \quad m(x_0,b r_0) \leq \varepsilon_1,\quad \omega(x_0,b r_0) \leq \varepsilon_2, \quad h(b r_0) \leq \varepsilon.
            \end{gathered}
        \end{equation*}
        It is clear that $h(b r_0) \leq \varepsilon_3$ since $h$ is non-decreasing.
        It is also readily seen with Lemma \ref{lem_separation} that $K$ separates $B(x_0, br_0)$ if $\varepsilon_0$ is small enough.
        We assume that (\ref{eq_varepsilon0}) holds so that Propositions \ref{prop_badmass_decay}, \ref{prop_flatness_decay}, and \ref{prop_energy_decay} can be applied.
        We also assume $\varepsilon_0$ small enough so that $m(x_0,t)$ can be estimated by scaling (Remark \ref{rmk_badmass}) for $t \in [b r_0,r_0]$.

        We deal with the decay of the bad mass.
        According to Proposition \ref{prop_badmass_decay} and the scaling property of $m$, there exists a universal constant $C_0 \geq 1$ such that
        \begin{equation*}
            m(x_0,b r_0) \leq C_0 (\varepsilon_2 + \varepsilon_0 \varepsilon_1 + \varepsilon_3).
        \end{equation*}
        We choose $\varepsilon_1$ in such a way that
        \begin{equation*}
            C_0 \varepsilon_2 = \varepsilon_1/4.
        \end{equation*}
        Then we choose $\varepsilon_0$ small enough so that $C_0 \varepsilon_0 \leq 1/4$ and finally we just choose $\varepsilon_3$ very small so that
        \begin{equation*}
            m(x_0,br_0) \leq \varepsilon_1.
        \end{equation*}

        We pass to the energy.
        We assume $\varepsilon_0$ less than the constant $\varepsilon_e$ given by Proposition \ref{prop_energy_decay} for our choice of $b$ (which will be fixed at some point).
        According to Proposition \ref{prop_energy_decay}, and more specifically Remark \ref{rmk_energy_decay}, there exists a constant $\CB \geq 1$ (depending on $N$, $b$) such that
        \begin{align*}
            \omega(x_0,br_0) &\leq C b \varepsilon_2 + \CB (\varepsilon_0 \varepsilon_1 + \varepsilon_3)\\
                             &\leq C b \varepsilon_2 + \CB (4 C_0 \varepsilon_0 \varepsilon_2 + \varepsilon_3).
        \end{align*}
        We take $b$ small enough so that $C b \leq 1/2$. We then assume $\varepsilon_0$ small enough so that $4 \CB C_0 \varepsilon_0 \leq 1/4$ and finally $\varepsilon_3$ small enough so that $\omega(x_0,br_0) \leq \varepsilon_2$.

        We are left to deal with the flatness.
        We recall that $\EG \in (0,1)$ and $\CG \geq 1$ are the universal constants of Theorem \ref{thm_flatness} for $\gamma = 1/2$.
        We recall that $\cc = 10^{-6}$ is the constant of Proposition \ref{prop_flatness_decay}.
        We apply Lemma \ref{lem_flatness} in the ball $B(x_0,\cc r_0)$ with the constants
        \begin{equation*}
            \varepsilon := \cc^{-1} \varepsilon_0 \quad \text{and} \quad a := \cc^{-1} b.
        \end{equation*}
        Here, we need to assume $\varepsilon_0$ small enough so that $\varepsilon \leq \EG$ and $b$ small enough so that $a \leq 1/2$.
        Since
        \begin{equation*}
            \beta(x_0, \cc r_0) \leq \cc^{-1} \beta(x_0,r_0) \leq \varepsilon,
        \end{equation*}
        Lemma \ref{lem_flatness} gives a constant $\lambda > 0$ (depending on $N$, $\varepsilon_0$) such that if for all deformation competitor $L$ of $K$ in $B(x_0, \cc r_0)$,
        \begin{equation*}
            \HH^{N-1}(K \cap B(x_0, \cc r_0)) - \HH^{N-1}(L \cap B(x_0, \cc r_0)) \geq \lambda (\cc r_0)^{N-1},
        \end{equation*}
        then we have
        \begin{equation*}
            \beta(x_0,b r_0) = \beta(x_0, a \cc r_0) \leq 4 \CG \sqrt{a} \lambda.
        \end{equation*}
        Moreover, Proposition \ref{prop_flatness_decay} estimates
        \begin{align}
            \HH^{N-1}(K \cap B(x_0, \cc r_0)) - \HH^{N-1}(L \cap B(x_0, \cc r_0)) &\leq C (\varepsilon_0\varepsilon_1 + \varepsilon_2 + \varepsilon_3 ) r_0^{N-1}\notag\\
                                                                                  &\leq 3 C \varepsilon_1 r_0^{N-1}.\label{eq_minimality_decay}
        \end{align}
        We assume $\varepsilon_1$ small enough (depending on $N$, $\alpha$, $\varepsilon_0$) so that the left-hand side of (\ref{eq_minimality_decay}) is $\leq \lambda \cc^{N-1}$ and this implies
        \begin{equation*}
            \beta(x_0, b r_0) \leq 4 \CG \sqrt{a} \varepsilon.
        \end{equation*}
        Substituting $\varepsilon = \cc^{-1} \varepsilon_0$ and $a = \cc^{-1} b$, we find
        \begin{equation*}
            \beta(x_0, b r_0) \leq 4 \CG \cc^{-3/2} \sqrt{b} \varepsilon_0
        \end{equation*}
        and we can finally fix $b$ small enough so that $\beta(x_0,br_0) \leq \varepsilon_0$.

        We now iterate our intermediate result and we obtain that for all $n \geq 0$, $K$ separates $B(x_0, b^n r_0)$ and
        \begin{equation*}
            \beta(x_0,b^n r_0) \leq \varepsilon_0, \quad m(x_0,b^n r_0) \leq \varepsilon_1, \quad \omega(x_0,b^n r_0) \leq \varepsilon_2, \quad h(b^n r_0) \leq \varepsilon_3.
        \end{equation*}
        We deduce that  for all $0 < r \leq r_0$, $K$ separates $B(x_0,r)$ and 
        \begin{equation*}
            \beta(x_0,r) \leq C \varepsilon_0, \quad m(x_0, r) \leq C \varepsilon_1, \quad \omega(x_0, r) \leq C \varepsilon_2, \quad h(r) \leq \varepsilon_3.
        \end{equation*}
    \end{proof}

    \begin{proposition}\label{prop_main}
        There exists a universal $c \in (0,1)$ and for each choice of $\alpha \in (0,1)$, $\varepsilon_0 \in ]0,1/2]$, there exists $\varepsilon \in (0,\varepsilon_0)$ (depending on $N$, $\alpha$, $\varepsilon_0$) such that the following holds.
        Let $(u,K)$ be a Griffith almost-minimizer with gauge $h(t) = h(1) t^{\alpha}$ in $\Omega$.
        Let $x_0 \in K$, $r_0 > 0$ be such that $B(x_0,r_0) \subset \Omega$, $K$ separates $B(x_0,r_0)$ and 
        \begin{equation*}
            \beta(x_0,r_0) + \omega(x_0,r_0) + m(x_0,r_0) + h(r_0) \leq \varepsilon,
        \end{equation*}
        then for all $x \in K \cap B(x_0,c r_0)$ and for all $0 < r \leq c r_0$, we have
        \begin{equation*}
            \beta(x,r) \leq \varepsilon_0, \quad m(x,r) = 0, \quad \omega(x,r) \leq \varepsilon_0 \left(\frac{r}{r_0}\right)^\alpha
        \end{equation*}
        and $K$ is an almost-minimal set in $B(x_0,c r_0)$ with gauge $\tilde{h}(r) = \varepsilon_0 (r/r_0)^\alpha$.
    \end{proposition}
    \begin{proof}
        The letter $C$ as a generic constant $\geq 1$ that depends on $N$ and $\alpha$.
        Let $\omega_0 > 0$ be any constant.
        Let $\varepsilon_0 > 0$ (it will be chosen later, depending on $N$, $\alpha$ and $\omega_0$) and let $\varepsilon_1 \in (0,\varepsilon_0)$ be the associated constant in Lemma \ref{lem_decay} (depending on $N$, $\alpha$, $\varepsilon_0$).
        According to the scaling properties of our different quantities, there exists $\varepsilon > 0$ (which depends on $N$, $\alpha$ and $\varepsilon_0$) such that if
        \begin{equation}\label{eq_main_varepsilon}
            \beta(x_0,r_0) + \omega(x_0,r_0) + m(x_0,r_0) + h(r_0) \leq \varepsilon,
        \end{equation}
        then for all $x \in B(x_0,r_0/2)$, we have
        \begin{equation*}
            \beta(x,r_0/2) + \omega(x,r_0/2) + m(x,r_0/2) + h(r_0/2) \leq \varepsilon_1
        \end{equation*}
        which implies in turn by Lemma \ref{lem_decay} that for all $x \in B(x_0,r_0/2)$ and all $0 < r \leq r_0/2$,
        \begin{equation}\label{eq_thm_main_varepsilon0}
            \beta(x,r) + \omega(x,r) + m(x,r) + h(r) \leq \varepsilon_0.
        \end{equation}

        We assume $\varepsilon_0 \leq \tau$ (where $\tau$ is the universal constant defined in (\ref{eq_constantes}). It then follows from the definition of the bad mass, that for all $x \in B(x_0,r_0/2)$ and $0 <r \leq r_0/2$,
        \begin{equation*}
            m(x,t) = 0.
        \end{equation*}

        Next, we prove that $\omega$ decays with exponent $\alpha$.
        Let us fix $x \in B(x_0,r_0/2)$ and $r_1 = r_0/2$.
        Let $b \in (0,1)$ (which will be chosen small enough, depending on $N$, $\alpha$).
        We assume $\varepsilon_0$ small enough in (\ref{eq_thm_main_varepsilon0}) so that Proposition \ref{prop_energy_decay} applies and give a constant $\CB \geq 1$ (depending on $N$, $\alpha$, $b$) such that for all $0 < r \leq r_1$,
        \begin{equation}\label{eq_omega_decay}
            \omega(x,b r) \leq C b \omega(x,r) + \CB h(r).
        \end{equation}
        We fix $b$ such that $C b \leq b^\alpha/2$.
        We assume $\varepsilon_0$ small enough so that $\omega(x,r_1) \leq \omega_0$ and $\CB h(r_1) \leq \omega_0 b^\alpha / 2$.
        Since $h$ decays as a power $\alpha$, we see that for all $k \in \N$,
        \begin{equation}\label{eq_h_decay}
            \CB h(b^k r_1) = \CB h(r_1) b^{k\alpha} \leq \omega_0 b^{(k+1)\alpha}/2.
        \end{equation}
        We are going to deduce by induction on (\ref{eq_omega_decay}) that for all $k \in \N$,
        \begin{equation}\label{eq_omega_induction}
            \omega(x,b^k r_1) \leq \omega_0 b^{k \alpha}.
        \end{equation}
        The case $k = 0$ is straightforward and if (\ref{eq_omega_induction}) holds at rank $k$, it also holds at rank $k+1$ because by (\ref{eq_omega_decay}) and (\ref{eq_h_decay}),
        \begin{align*}
            \omega(x,b^{k+1} r_1) &\leq b^{\alpha} \omega(x,b^k r_1)/2 + \CB h(b^k r_1)\\
                                  &\leq b^{\alpha} \left(\omega_0 b^{k\alpha}\right)/2 + \omega_0 b^{(k+1) \alpha}/2\\
                                  &\leq \omega_0 b^{(k+1) \alpha}.
        \end{align*}
        This proves our claim.
        We then deduce from (\ref{eq_omega_induction}) and the scaling property of $\omega$ that for all $0 < r \leq r_1$,
        \begin{equation*}
            \omega(x,r) \leq C \omega_0 \left(\frac{r}{r_1}\right)^\alpha.
        \end{equation*}
        Note that since $h(r_1) \leq \omega_0$, we also have $h(r) = h(r_1) (r/r_1)^{\alpha} \leq \omega_0 (r/r_1)^\alpha$ for $0 < r \leq r_1$.

        We finally show that the set $K$ is almost-minimal in $B(x_0, \cc r_1)$.
        For all $x \in K \cap B(x_0,r_1)$ and $0 < r \leq r_1$, we apply Proposition \ref{prop_flatness_decay} in $B(x,r)$ and we obtain that for all deformation competitor $L$ in $B(x,\cc r)$, we have
        \begin{align*}
            \HH^{N-1}(K \cap B(x, \cc r)) - \HH^{N-1}(L \cap B(x, \cc r))   &\leq C \left[\omega(x,r) + h(r)\right] r^{N-1}\\
                                                                            &\leq C \omega_0 \left(\frac{\cc r}{r_0}\right)^\alpha.
        \end{align*}
        We conclude that $K$ is almost-minimal in $B(x_0, \cc r_1)$ with gauge $\tilde{h}(t) = C \omega_0 (t/r_0)^\alpha$.
        One can choose $\omega_0$ arbitrary small by taking $\varepsilon_0$ small enough and then $\varepsilon$ accordingly small in (\ref{eq_main_varepsilon}).
    \end{proof}

    From the conclusion of Proposition \ref{prop_main}, the set $K$ is almost-minimal in $B(x_0, c r_0)$ with a small flatness and a small gauge.
    Then, according to the regularity theory of almost-minimal sets, $K$ is a Hölder differentiable surface in a smaller ball.
    The exact statement can be found in \cite[Theorem 12.25]{d5} and says that if a coral almost-minimal set $E$ is close enough to ``full lenght minimal cone $X$'', then $E$ is a $C^{1,\gamma}$ version of $X$.
    The theorem is written for two dimensional sets but it hold in higher dimensions as well in the special case where $X$ is an hyperplane.
    We state a simplified version for hyperplanes and we justify below how it can be deduced from \cite[Theorem 12.25]{d5}.
    \begin{theorem}\label{thm_david}
        For each choice of $\alpha \in (0,1)$, there exists constants $\varepsilon_0 > 0$, and $\gamma \in (0,\alpha)$ (depending on $N$, $\alpha$) such that the following holds.
        Let $E$ be a coral almost-minimal set with gauge $h(t) = h(1) t^{\alpha}$ in some open set $\Omega$.
        Let $x_0 \in E$, $r_0 > 0$ be such that $B(x_0,100 r_0) \subset \Omega$ and
        \begin{equation}\label{eq_david_beta}
            \beta_E(x_0,100 r_0) + h(100 r_0) \leq \varepsilon_0,
        \end{equation}
        then there is a $C^{1,\gamma}$ diffeomorphism
        \begin{equation*}
            \phi \colon B(x_0,2 r_0) \longrightarrow \phi(B(x_0,2 r_0))  
        \end{equation*}
        such that $\phi(x_0) = x_0$, $D\phi(x_0)$ is the identity mapping, $\abs{\phi(y) - y} \leq 10^{-2} r_0$ for all $y \in B(x_0,2r_0)$ and
        \begin{equation*}
            E \cap B(x_0,r_0) = \phi(P \cap B(x_0,2r_0)) \cap B(x_0,r_0),
        \end{equation*}
        where $P$ is a given hyperplane which achieves the minimum in the definition of $\beta_E(x_0,100)$.
    \end{theorem}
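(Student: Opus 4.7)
The plan is to upgrade the flatness decay result of Theorem \ref{thm_flatness_decay} (which we proved for truly minimal sets) to the almost-minimal setting, then convert polynomial decay of the flatness into a $C^{1,\gamma}$ graph structure via a Reifenberg-type parametrization. The overall architecture mirrors the proof of Allard's regularity theorem that was recalled in the proof of Theorem \ref{thm_flatness_decay}, but with additional error terms controlled by the gauge $h$.

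First, I would propagate the initial smallness of $\beta(0,100 r_0)$ to all nearby points and smaller scales: by the scaling inequalities in Remark \ref{rmk_beta} and the Ahlfors-regularity of coral almost-minimal sets (available by standard arguments once $h(r_0) \leq 1$), for all $x \in E \cap B(0,r_0)$ and all $r \leq r_0$ one has $\beta(x,r) \leq C \beta(0,100 r_0) \leq C\varepsilon_0$, which in particular ensures separation of $E$ in each such ball for $\varepsilon_0$ small. Second, and this is the heart of the argument, I would establish a Caccioppoli-type inequality adapted to the almost-minimal setting: for each $x \in E$ near $0$, each scale $r \leq r_0$, and each hyperplane $T$ close to the approximating plane at $(x,r)$, one has
\begin{equation*}
E(x,r,T) \leq C\Bigl(r^{-1}\sup_{y\in E\cap B(x,2r)}\mathrm{dist}(y,T)\Bigr)^{\!2} + C\,h(2r),
\end{equation*}
where the extra $h$-term comes from the gauge in the almost-minimality inequality applied to the standard deformation of $E$ used in Simon's Caccioppoli estimate for stationary varifolds. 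Combined with the hypothesis $h(r) = c_h r^\alpha$, this estimate inserts an $r^\alpha$ correction at every scale.

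Third, I would iterate this Caccioppoli inequality following the argument at \cite[\S 22--23]{simon}: one obtains a sequence of hyperplanes $S_k$ at dyadic scales $r_k = 2^{-k} r_0$ whose normals form a Cauchy sequence with geometric rate, producing a limit plane $T_x$ and a quantitative estimate
\begin{equation*}
\beta(x,r) + E(x,r,T_x) \leq C \Bigl(\frac{r}{r_0}\Bigr)^{\!2\gamma}\!\bigl(\beta(0,100 r_0)^2 + h(100 r_0)\bigr)
\end{equation*}
for some $\gamma \in (0,\alpha/2)$; the exponent $\gamma$ must be strictly less than $\alpha/2$ because the gauge term $h(r) \sim r^\alpha$ forces a loss in the iteration, and this trade-off is the main technical delicacy of the argument.

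Finally, once uniform polynomial decay of the flatness is in hand, the map $x \mapsto \nu_x$ (unit normal to $T_x$) is well defined on $E$ and $\gamma$-Hölder continuous by a standard triangle-inequality argument comparing the planes $T_x$ and $T_y$ on the common scale $|x-y|$. This is precisely the hypothesis for Reifenberg's parametrization theorem (in its $C^{1,\gamma}$ version, e.g. as reworked in David--Kenig--Toro), which produces a $C^{1,\gamma}$ diffeomorphism $\phi$ from a neighborhood of $P \cap B(0,2r)$ onto a neighborhood of $E \cap B(0,2r)$ with $\phi(P) = E$ locally and $\|\phi - \mathrm{id}\|_\infty$ controlled by $\beta(0,100 r_0) \lesssim \varepsilon_0$, giving the estimate $|\phi(y) - y| \leq 10^{-2} r$ after shrinking $\varepsilon_0$. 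The main obstacle throughout is the second step: carefully designing the competitor used in the Caccioppoli estimate so that the almost-minimality inequality can be applied with only an $h(r) r^{N-1}$ loss, without relying on the strict stationarity available only for genuine minimizers.
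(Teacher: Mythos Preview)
Your proposal sketches a from-scratch proof via Allard-type excess decay plus Reifenberg parametrization. The paper does something entirely different and much shorter: it treats the result as essentially known, citing David \cite[Theorem 12.25]{d5}, and the only work in the paper's proof is a reduction showing that the flatness hypothesis \eqref{eq_david_beta} implies David's original hypothesis, which involves the density excess
\[
f(r) = \frac{\HH^{N-1}(E \cap B(0,r))}{\omega_{N-1} r^{N-1}} - \lim_{s \to 0} \frac{\HH^{N-1}(E \cap B(0,s))}{\omega_{N-1} s^{N-1}}.
\]
Concretely, the paper shows (i) the limiting density at $0$ is at least $\omega_{N-1}$, by taking a blow-up cone $F$ and using rectifiability to find a point of density exactly $\omega_{N-1}$ on $F$; and (ii) the density at scale $r$ is at most $\omega_{N-1}(1 + C\beta(0,2r))$ by the competitor estimate \eqref{eq_density_flatness} already proved in Theorem~\ref{thm_flatness_decay}. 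So the paper's ``proof'' is really a two-paragraph verification that the hypotheses match.

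As for your own outline, there is a genuine gap in Step~1: the claim that $\beta(x,r) \leq C\beta(0,100r_0)$ for \emph{all} $r \leq r_0$ does not follow from Remark~\ref{rmk_beta}, which only gives $\beta(x,r) \leq (2r_0/r)\beta(x_0,r_0)$, blowing up as $r \to 0$. Uniform control of $\beta$ at all scales is exactly the conclusion you are trying to reach, not a starting point; it must emerge from the iteration in Step~3, not precede it. Separately, the Caccioppoli inequality you invoke in Step~2 comes from Simon's first-variation identity for \emph{stationary} varifolds, and almost-minimal sets are not stationary --- you would need to replace this by a direct competitor argument (squash onto the plane) and track the gauge error, which is doable but is more than a remark. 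Your overall architecture is reasonable as an independent proof, but the paper bypasses all of this by citing David.
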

    \begin{proof}
        David's theorem \cite[Theorem 12.25]{d5} requires that
        \begin{equation}\label{eq_david_density}
            f(100r_0) + d_{x_0,100r_0}(E,X) + h(100r_0) \leq \varepsilon_0,
        \end{equation}
        where
        \begin{equation}\label{eq_defi_f}
            f(r) = \frac{\HH^{N-1}(E \cap B(x_0,r))}{\omega_{N-1} r^{N-1}} - \lim_{s \to 0} \left(\frac{\HH^{N-1}(E \cap B(x_0,s))}{\omega_{N-1} s^{N-1}}\right)
        \end{equation}
        is the density excess, $\omega_{N-1}$ is the measure of a $(N-1)$-dimensional unit disk and $d_{x_0,100r_0}(E,X)$ is the normalized local Hausdorff distance between $E$ and a ``full lenght minimal cone $X$'' centred at $x_0$. 
        The right-hand side limit in (\ref{eq_defi_f}) exists because for a coral almost-minimal set $E$ containing $x_0$, the density $s \mapsto s^{1-N} \HH^{N-1}(E \cap B(x_0,s))$ is nearly non-decreasing (\cite[Proposition 5.24]{d3}).

        The goal of this proof is to justify that the condition (\ref{eq_david_beta}) is a simplification of (\ref{eq_david_density}) in the special case where $X$ is an hyperplane passing through $x_0$. First, the bilateral flatness $\beta_E(x_0,100 r_0)$ is nothing else than the normalized local Hausdorff distance between $E$ and an hyperplane passing through $x_0$. Next, we justify that $f(100 r_0)$ is controlled by $\beta_E(x_0,100 r_0)$.
        We start by showing
        \begin{equation}\label{eq_lower_bound_density}
            \lim_{s \to 0} \frac{\HH^{N-1}(E \cap B(x_0,s))}{s^{N-1}} \geq \omega_{N-1}.
        \end{equation}
        Let $\theta$ denotes the value of the limit in (\ref{eq_lower_bound_density}).
        We have necessarily $\theta > 0$ because almost-minimal sets are Ahlfors-regular (\cite[Lemma 2.15]{d3}).
        According to \cite[Proposition 7.21]{d3}, there exists a coral minimal cone $F$ centered at the origin (take any blow-up of $E$ at $x_0$ in local Hausdorff distance) such that $\HH^{N-1}(F \cap B(0,1)) = \theta$.
        The set $F$ is a cone centred at the origin so for all $r > 0$,
        \begin{equation*}
            \HH^{N-1}(F \cap B(0,r)) = r^{N-1} \theta.
        \end{equation*}
        The set $F$ is also minimal in $\R^N$ so for all $x \in F$, the fonction $r \mapsto r^{1-N} \HH^{N-1}(F \cap B(x,r))$ is non-decreasing. One can deduce that for all $x \in F$ and for all $r > 0$,
        \begin{equation}\label{eq_F_density}
            \HH^{N-1}(F \cap B(x,r)) \leq \theta r^{N-1}.
        \end{equation}
        As a minimal set is rectifiable, we should have for $\HH^{N-1}$-a.e. $x \in F$,
        \begin{equation}\label{eq_F_rectifiable}
            \lim_{r \to 0} \frac{\HH^{N-1}(F \cap B(x,r))}{\omega_{N-1} r^{N-1}} = 1.
        \end{equation}
        And since $\HH^{N-1}(F) > 0$, there exists a least one point such that (\ref{eq_F_rectifiable}) holds.
        In combination with (\ref{eq_F_density}), this proves that $\theta \geq \omega_{N-1}$.
        With (\ref{eq_lower_bound_density}) at hand, we see that for $0 < r \leq 100 r_0$
        \begin{equation*}
            f(r) \leq \frac{\HH^{N-1}(E \cap B(x_0,r))}{\omega_{N-1} r^{N-1}} - 1
        \end{equation*}
        and we can see as in (\ref{eq_density_flatness}) that for $0 < r \leq 50 r_0$,
        \begin{equation*}
            \frac{\HH^{N-1}(E \cap B(x_0,r))}{\omega_{N-1} r^{N-1}} - 1 \leq C \beta(x_0,2r).
        \end{equation*}
        It is now clear that (\ref{eq_david_beta}) implies (\ref{eq_david_density}) (at a smaller scale but it does not matter).
    \end{proof}

    It is left to improve the assumptions of Proposition \ref{prop_main} by initializing $m$ and $\omega$. Our main theorem is proved by combining Lemma \ref{lem_initialization}, Proposition \ref{prop_main} and Theorem \ref{thm_david}.
    \begin{lemma}\label{lem_initialization}
        Let $(u,K)$ be a Griffith almost-minimizer with gauge $h$ in $\Omega$.
        Let $x_0 \in K$, $r_0 > 0$ be such that $B(x_0,r_0) \subset \Omega$, $K$ separates $B(x_0,r_0)$ and $h(r_0) \leq \eaf$.
        For all $\varepsilon_0 > 0$, there exists $c \in (0,1)$ and $\varepsilon \leq \varepsilon_0$ (both depending on $N$, $\varepsilon_0$) such that if
        \begin{equation*}
            \beta(x_0,r_0) + h(r_0) \leq \varepsilon,
        \end{equation*}
        then
        \begin{equation*}
            \beta(x_0,c r_0) + \omega(x_0, c r_0) + m(x_0, c r_0) + h(c r_0) \leq \varepsilon_0.
        \end{equation*}
    \end{lemma}
    \begin{proof}
        The letter $C$ denotes a generic universal constant $\geq 1$.
        We assume
        \begin{equation*}
            \beta(x_0,r_0) + h(r_0) \leq \varepsilon
        \end{equation*}
        and we initialize $\omega$ to be less than some constant $\varepsilon_1 > 0$ at a smaller scale.
        Let $b \in (0,1)$ (to be chosen soon, depending on $N$, $\varepsilon_1$) and let $\varepsilon_e$ be the associated constant in Proposition \ref{prop_energy_decay}. We assume $\varepsilon \leq \varepsilon_e$ so that Proposition \ref{prop_energy_decay} applies and give us a constant $\CB \geq 1$ (depending on $N$, $b$) such that
        \begin{align*}
            \omega(x_0, b r_0) &\leq C b \omega(x_0, br_0) + \CB \left(\beta(x_0,r_0) m(x_0,r_0) + h(r_0)\right)\\
                               &\leq C b + \CB (\beta(x_0,r_0) + h(r_0)).
        \end{align*}
        Now, we can choose $b$ small enough so that $C b \leq \varepsilon_1$ and then $\varepsilon$ small enough so that $\omega(x_0,br_0) \leq 2 \varepsilon_1$.
        We can also take $\varepsilon$ small enough so that
        \begin{equation}\label{eq_start2}
            \beta(x_0, br_0) + \omega(x_0,br_0) + h(b r_0) \leq 3 \varepsilon_1
        \end{equation}
        by the scaling property of $\beta$ and the monotonicity of $h$, 

        Next, we start from (\ref{eq_start2}) and we initialize $m$ to be less than some constant $\varepsilon_0 > 0$.
        We assume $\varepsilon_1$ small enough so that Proposition \ref{prop_badmass_decay} applies and we get,
        \begin{align*}
            m(x_0, b r_0/4) &\leq C(\omega(x_0,br_0) + \beta(x_0,br_0)m(x_0,br_0) + h(br_0))\\
                            &\leq C(\omega(x_0,br_0) + \beta(x_0,br_0) + h(br_0))\\
                            &\leq C \varepsilon_1
        \end{align*}
        which is $\leq \varepsilon_0$ if $\varepsilon_1$ is small enough.
    \end{proof}

    \section*{Acknowledgements}

    C. Labourie was funded by an internal research grant from the University of Cyprus and the DFG project FR 4083/3-1. He gratefully acknowledges their support.

    \bibliographystyle{plain}
    \bibliography{biblio_griffith}

    \end{document}